\newtheorem{The}{Theorem}[section]
\newtheorem{Note}{Note}[section]
\newtheorem{Def}{Definition}[section]
\newtheorem{Ex}{Example}[section]
\begin{document}
	
	% Use the \preprint command to place your local institutional report number 
	% on the title page in preprint mode.
	% Multiple \preprint commands are allowed.
	%\preprint{}
	
	\title{Stability and Bifurcation Analysis of Two-Term Fractional Differential Equation with Delay} %Title of paper
	
	\author{Sachin Bhalekar}
	\email[corresponding author: ]{sachinbhalekar@uohyd.ac.in}
	%\homepage[]{Your web page}
	%\thanks{}
	%\altaffiliation{}
	\author{Deepa Gupta}
	\email[]{deepanew96@gmail.com}
	\affiliation{School of Mathematics and Statistics, University of Hyderabad, Hyderabad, Telangana, India- 500046}

	\begin{abstract}
		This manuscript deals with the stability and bifurcation analysis of the equation $D^{2\alpha}x(t)+c D^{\alpha}x(t)=a x(t)+b x(t-\tau)$, where $0<\alpha<1$ and $\tau>0$. We sketch the boundaries of various stability regions in the parameter plane under different conditions on $\alpha$ and $b$. First, we provide the stability analysis of this equation with $\tau=0$. Change in the stability of the delayed counterpart is possible only when the characteristic roots cross the imaginary axis. This leads to various delay-independent as well as delay-dependent stability results. The stability regions are bifurcated on the basis of the following behaviors with respect to the delay $\tau$ viz. stable region for all $\tau>0$, unstable region, single stable region, stability switch, and instability switch.
	\end{abstract}
	
	\pacs{}% insert suggested PACS numbers in braces on the next line
	
	\maketitle %\maketitle must follow title, authors, abstract and \pacs
	
	\begin{quotation}
		\textbf{Fractional derivatives and the delay are the popular tools used by applied scientists to model the accurate behaviors of natural systems. As the fractional derivative is a nonlocal operator, one has to provide the complete history of the state during its computation. On the other hand, the delay uses the short history of the state. These equations have proved useful in modeling the systems in applied sciences and engineering. Though these equations look very similar to the classical ordinary differential equations, the analysis is, however, not simple. The characteristic equations of these equations are of a transcendental nature and admit infinitely many roots, in general. Therefore, the simple conditions in terms of the parameters, as provided in this manuscript, are very useful to the researchers. The complex dynamics of these systems include not only stable and unstable behaviors but the delay-dependent features such as a single stable region provided by the delay interval and stability/instability switches.}
	\end{quotation}

\section{Introduction}
	\noindent 
Differential equations (DE) can be treated as the heart of mathematical analysis. These equations emerge as models in natural systems. As the derivative represents the rate of change of a quantity, the DE models are widely used by Scientists, Engineers, Economists, and so on \cite{simmons2016differential,khalil2002control,strogatz2018nonlinear}. 
The second-order ordinary differential equations are found in classic examples such as Newton's law of gravitation, simple harmonic motion (damped undamped and forced oscillations), Kepler's laws of planetary motion, and LRC circuit in electronics \cite{simmons2016differential}. B van der Pol \cite{van1926lxxxviii} used these equations to model the reaction oscillators.
% some recent work on 2nd order ode

% DDE
Though researchers widely use the ODE models, they are unsuitable for modeling the memory properties in natural systems. Volterra \cite{volterra} proposed the integral equations containing the nonlocal integral operator to model the biological systems. Furthermore, the delay in the model also has the ability to include the history of the state in the model \cite{lakshmanan2011dynamics,hale2012functional}. If the rate of change of the present state depends on the state at past times then the resulting equation is called the delay differential equation (DDE). Smith \cite{smith2011introduction} discussed the applications of the DDEs in life science. The stability and oscillation theory of the DDEs arising in the population dynamics is developed by Gopalsamy \cite{gopalsamy2013stability}. Various applications of these equations in fluid dynamics, economics, mechanical engineering, life science, chemistry, and physics are presented by Erneux in the book \cite{erneux2009applied}.
% FDE
\par The constant delay in the equation is adequate for modeling the ``short-time memory". However, we need nonlocal operators such as ``fractional order derivative" to include the ``long-term memory" in the model. The order of the derivative in the ``fractional order differential equation (FDE)" is ``non-integer". As one can expect, the generalization to fractional derivative (FD) can be done in many ways \cite{podlubny1998fractional,kilbas2006theory}. However, we stick to the definition provided by Caputo because it is more suitable to real-life problems.

% FDDE
\par Thus, there are improvements in the model involving both ``fractional order derivative" and the ``delay" so as to describe the ``real behavior" of the natural systems. Bhalekar, Daftardar-Gejji, and coworkers propose numerical methods to solve fractional-order delay differential equations (FDDE) in \cite{bhalekar2011predictor,daftardar2015solving}. Stability analysis of the scalar FDDE is proposed by Bhalekar in \cite{bhalekar2016stability} where the regions of the stability are provided in the parameter plane. Stability, bifurcations and chaos in FDDEs are investigated in \cite{bhalekar2011fractional,bhalekar2010fractional,baleanu2015chaos,daftardar2012dynamics,bhalekar2012dynamical,bhalekar2013stability,bhalekar2022stability,gu2017artificial,shi2022chaos}. The detailed bifurcation analysis of scalar FDDE is presented by Bhalekar and Gupta in \cite{bhalekar2023can}. 
% 2nd order DDE
\par	The DDEs involving the second-order derivative are useful in modeling a wide range of systems. 
Second-order DDE with distributed delay appears as a model of hereditary dynamics (see \cite{volterra}, pp. 191, eq. (3)). Milton and Longtin \cite{milton1990evaluation} used these equations to study the human pupil cycle. Campbell et al. \cite{campbell1995complex} proved the existence of limit cycles, two-tori, and multi-stability in the damped harmonic oscillator with delayed feedback. Hopf bifurcations in the delayed Duffing oscillator are studied in \cite{erneux2023short}. Hybrid bistable device \cite{vallee1987second} in the optics can also be modeled by using these equations.
%Motivation % difficulties in analyzing ddes
\par This discussion motivated us to analyze the stability of the ``two-term fractional order delay differential equation," which is the generalization of second-order DDE. We provide the complete bifurcation analysis of this equation under various conditions on parameters and fractional order. We observed various stability behaviors of this system.
% We consider the linear autonomous equation $D^{2\alpha}x(t)+c D^{\alpha}x(t)=a x(t)+b x(t-\tau)$.
% structure of paper
\par The paper is organized as follows: 
Section \eqref{sec3} deals with some basic definitions and Theorems. Section \eqref{sec3.1} provides stability analysis of non-delayed case of equation \eqref{eq3.1}. In Section \eqref{sec3.4} we give the sufficient condition under which our equation \eqref{eq3.2} have the non-existence of critical values of delay. Section \eqref{sec3.5} deals with the conditions on the parameter for the existence of positive root. Some conditions under which the characteristic equation has roots with positive real part are described in Section \eqref{sec3.6}. The open problems are given in Section \eqref{sec3.7}. Section \eqref{section3.8} will provide the region in the parametric plane where the stability depends on delay parameter. In section \eqref{sec3.9}, we have some examples which validate our results. Section \eqref{sec3.10} summarizes the results given in the paper.
\section{Preliminaries}\label{sec3}
In this section, we provide some basic definitions described in the literature \cite{podlubny1998fractional,lakshmanan2011dynamics,kilbas2006theory,bhalekar2013stability,diethelm2002analysis}.
\begin{Def}
$\mathcal{L}_p[a,b]:=\{f:[a,b]\rightarrow\mathbb{R}; \textit{ f is measurable on $[a,b]$}  \textit{ and } \int_{a}^{b}|f(x)|^p dx<\infty\}$.
\end{Def}
\begin{Def}[Fractional Integral]
For any $f \in \mathcal{L}_{1}(0,b)$ the Riemann-Liouville fractional integral of order $\upmu >0$, is given by 

\begin{equation*}
\textit{I}^\upmu f(t)=\dfrac{1}{\Gamma(\upmu)}\int_{0}^{t}(t-\tau)^{\upmu-1}f(\tau)d\tau  , \quad   0<t<b.
\end{equation*}

\end{Def}
\begin{Def}[Caputo Fractional Derivative]
For $f ^m\in \mathcal{L}_{1}(0,b)$, $0<t<b$ and $m-1<\upmu\leq m$, $m \in \mathbb{N}$, the Caputo fractional derivative of function $f$ of order $\upmu$ is defined by,
\[\textit{D}^{\upmu} f(t)=
\begin{cases}
\frac{d^m}{dt^m} f(t) ,\textit{ if } \quad \upmu = m \\ \textit{I}^{m-\upmu}\dfrac{d^m f(t)}{dt^m}, \textit{ if } \quad  m-1< \upmu < m. 
\end{cases}\]
 Note that for  $m-1 < \upmu \leq m$, $m\in \mathbb{N},$

\[\textit{I}^\upmu\textit{D}^\upmu f(t)=f(t)-\sum_{k=0}^{m-1}\dfrac{d^k f(0)}{dt^k}\dfrac{t^k}{k!}.\]

\end{Def}
\begin{Def}\cite{vcermak2015stability} The two term FDE
 \begin{equation}\label{eq3.1}
 D^\alpha x(t) + c D^{2\alpha} x(t)=a_1 x(t),\quad 0<\alpha<1
 \end{equation}
 where $a_1\in\mathbb{R}$ is said to be \\
 (a) stable if all its solutions are bounded as $t\rightarrow\infty$.\\
 (b) asymptotically stable if all its solutions tend to zero as $t\rightarrow\infty$;\\
 (c) $t^{-\gamma}$ asymptotically stable if there is a real scalar $\gamma>0$ such that any solution of equation (\ref{eq3.1}) tends to zero like $O(t^{-\gamma})$ as $t\rightarrow\infty$.
  \end{Def}
 
 \begin{The}\cite{vcermak2015stability}\label{th3.1.1}
The FDE (\ref{eq3.1}) is asymptotically stable if and only if all the zeros $s$ of the polynomial  
 \begin{equation}\label{eq3.1.18}
 p(s)=s^2+\frac{1}{c} s-\frac{a_1}{c}
\end{equation}
satify
 \begin{equation}\label{eq3.1.12}
|arg (s)|>\alpha\pi/2.
\end{equation}
More precisely, the condition (\ref{eq3.1.12}) is necessary and sufficient for the $t^{-\gamma}$ asymptotic stability of equation (\ref{eq3.1}), where
\begin{eqnarray*}
\gamma=
\begin{cases}

&\alpha, \textit{ if } 0<\alpha\leq1/2\\
& 2\alpha-1,\textit{ if } 1/2<\alpha<1.\\
\end{cases}
\end{eqnarray*}
 \end{The}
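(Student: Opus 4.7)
My plan is to reduce this stability problem to a classical polynomial-root condition through the Laplace transform and the substitution $\sigma=s^{\alpha}$. Applying the Caputo rule $\mathcal{L}\{D^{\beta}x\}(s)=s^{\beta}X(s)-\sum_{k=0}^{m-1}s^{\beta-k-1}x^{(k)}(0)$ to (\ref{eq3.1}) with $\beta\in\{\alpha,2\alpha\}$, one obtains $X(s)=\Phi(s)/Q(s)$, where $Q(s)=c\,s^{2\alpha}+s^{\alpha}-a_{1}$ and $\Phi$ is a polynomial determined by the initial data. Hence the long-time behaviour of $x(t)$ is controlled by the singularities of $X$ inside the principal Riemann sheet $|\arg s|\le\pi$ of $s^{\alpha}$.

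Next, I would set $\sigma=s^{\alpha}$, under which the characteristic condition $Q(s)=0$ becomes the quadratic $c\sigma^{2}+\sigma-a_{1}=0$, i.e.\ $p(\sigma)=0$. Because $s\mapsto s^{\alpha}$ maps the closed right half-plane $\{|\arg s|\le\pi/2\}$ bijectively onto the sector $\{|\arg\sigma|\le\alpha\pi/2\}$, requiring $X$ to have no singularity with $\operatorname{Re} s\ge 0$ on the principal sheet is equivalent to requiring that every root of the \emph{polynomial} $p$ satisfies $|\arg\sigma|>\alpha\pi/2$. This reduces the transcendental spectral problem to a check on two concrete quadratic roots and establishes the ``iff'' statement at the level of pole location.

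To convert the spectral condition into the quantitative decay $x(t)=O(t^{-\gamma})$, I would invert by the Bromwich formula and deform the contour onto a Hankel keyhole surrounding the branch cut along the negative real axis. Under the spectral hypothesis there are no poles to pick up, leaving only the cut contribution. Expanding $1/Q(s)$ in powers of $s$ near the origin and isolating the leading singular term, the factor $s^{\alpha}$ dominates $s^{2\alpha}$ for $0<\alpha\le 1/2$, whereas $s^{2\alpha}$ dominates for $1/2<\alpha<1$. A Watson-type estimate along the Hankel contour then yields the advertised rates $\gamma=\alpha$ and $\gamma=2\alpha-1$, respectively.

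\textbf{Main obstacle.} The hardest part is the necessity direction together with the sharp boundary behaviour. One must show that any root $\sigma_{0}$ of $p$ with $|\arg\sigma_{0}|\le\alpha\pi/2$ genuinely produces a non-decaying mode of (\ref{eq3.1}). This requires careful bookkeeping of which preimages $s=\sigma_{0}^{1/\alpha}$ actually land on the principal Riemann sheet (the other branches are spurious and do not contribute residues to the Bromwich integral), together with exhibiting initial data that excite the unstable mode. The marginal case $|\arg\sigma_{0}|=\alpha\pi/2$ is especially delicate: here $s$ is purely imaginary, giving a bounded but non-decaying oscillation, which is precisely the reason the inequality in (\ref{eq3.1.12}) must be strict rather than weak.
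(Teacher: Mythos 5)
You should first note that the paper does not prove this theorem at all: it is imported verbatim from \cite{vcermak2015stability} as a preliminary, so the only meaningful comparison is with the Laplace-transform argument of that reference, which your outline broadly mirrors (transform, reduction to the quadratic $p$ via $\sigma=s^{\alpha}$, Bromwich contour collapsed onto a Hankel keyhole around the branch cut, asymptotics at the branch point). At that structural level the plan is the right one.

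There is, however, a concrete error in the step that is supposed to produce the two decay rates. Near $s=0$ one has $|s|^{\alpha}\gg|s|^{2\alpha}$ for \emph{every} $\alpha\in(0,1)$, so ``$s^{\alpha}$ dominates $s^{2\alpha}$ for $0<\alpha\le 1/2$ whereas $s^{2\alpha}$ dominates for $1/2<\alpha<1$'' is false; moreover the denominator $Q(s)=c s^{2\alpha}+s^{\alpha}-a_{1}$ is dominated near the origin by the constant $-a_{1}$, so the dichotomy cannot come from $Q$ at all. The true mechanism sits in the numerator $\Phi(s)$, i.e.\ in the initial-value terms of the Caputo transform. For $0<\alpha\le 1/2$ one has $2\alpha\le 1$ and $\Phi(s)=x(0)\,(s^{\alpha-1}+c\,s^{2\alpha-1})$, whose leading singularity $s^{\alpha-1}$ inverts to $t^{-\alpha}/\Gamma(1-\alpha)$, giving $\gamma=\alpha$. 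For $1/2<\alpha<1$ the operator $D^{2\alpha}$ requires the extra datum $x'(0)$ and contributes $c\,x'(0)\,s^{2\alpha-2}$, which is \emph{more} singular than $s^{\alpha-1}$ (since $2\alpha-2<\alpha-1$) and inverts to $t^{-(2\alpha-1)}/\Gamma(2-2\alpha)$, giving the slower rate $\gamma=2\alpha-1$. As written, your Watson-type estimate would return $t^{-\alpha}$ in both regimes. Two further points need tightening: under the spectral hypothesis there \emph{can} be poles on the principal sheet (roots with $\alpha\pi/2<|\arg\sigma|\le\alpha\pi$ give $\operatorname{Re}s<0$), which are picked up when you collapse the contour but only contribute exponentially decaying residues; and the necessity half (nonvanishing residue for suitable initial data, plus the marginal purely imaginary case) is exactly what you defer as the ``main obstacle,'' so the ``only if'' direction remains a programme rather than a proof.
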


 \section{Stability region for equation (\ref{eq3.1}) in the $a_1 c$-plane} \label{sec3.1}
 Roots of equation (\ref{eq3.1.18}) are
 
 \begin{equation*}
 s_1=\frac{-1+\sqrt{1+4 a_1 c}}{2 c},\quad s_2=\frac{-1-\sqrt{1+4 a_1 c}}{2 c}.
 \end{equation*}
  \textbf{Case 1} If $a_1 c>-1/4$ then $s_1, s_2\in\mathbb{R}$.\\
  The condition for stability of (\ref{eq3.1.12}) becomes  $s_1<0$ and $s_2<0$.
If $c<0$ then $s_2>0$ and the system is unstable. Therefore we assume that $c>0$. In this case, $s_2<0$. Further, $s_1<0$ if $\frac{-1}{4c}<a_1<0.$ Thus, the stability condition (\ref{eq3.1.12}) becomes
 \begin{equation}\label{eq3.1.5}
 c>0,\quad \frac{-1}{4c}<a_1<0.
\end{equation}
 \textbf{Case 2} If $a_1 c<\frac{-1}{4}$ then the roots of equation (\ref{eq3.1.18}) are

 \begin{align*}
 s_1=\frac{-1+i\sqrt{-1-4a_1 c}}{2c}, \quad s_2=\frac{-1-i\sqrt{-1-4a_1 c}}{2c}.
 \end{align*}

 Therefore, $|arg(s_1)|=|arg(s_2)|$.\\
 \textbf{Subcase 2.1}  $c>0$.\\
  So, 
 $arg(s_{1})=\pi-\arctan(\sqrt{-1-4a_1 c}).$

% $\Rightarrow arg(\lambda^\alpha)=\pi-\arctan(\sqrt{-1-4a_1 c}).$\\
 Since, \begin{align*}
 \frac{-\pi}{2}\leq-\arctan(\sqrt{-1-4a_1 c})\leq 0,\\
 \pi-\frac{\pi}{2}\leq \pi-\arctan(\sqrt{-1-4a_1 c})\leq \pi.
 \end{align*}
Therefore, we get $arg(s_{1})\geq\frac{\pi}{2}>\frac{\alpha \pi}{2}$ for any $\alpha\in(0,1)$. Thus, the stability condition (\ref{eq3.1.12}) reduces to

 \begin{equation}\label{eq3.1.4}
  c>0 \textit{ and } a_1 c<\frac{-1}{4}.
 \end{equation}\\

 \textbf{Subcase 2.2} $c<0.$

 We have, $|arg(s_1)|=|\arctan (\sqrt{-1-4a_1 c})|$. 
 
 If $a_1 c<\dfrac{-\tan^2(\alpha\pi/2)-1}{4}$ then, we get
 
  $4 a_1 c+1<-\tan^2(\alpha\pi/2)$.\\
 
 $\Rightarrow \sqrt{-(4 a_1 c+1)}>\tan(\alpha\pi/2)$.\\
 
 So, $\arctan(\sqrt{-(4 a_1 c+1)})>\alpha\pi/2$.\\
 
 $\Rightarrow|arg(s_1)|>\alpha\pi/2.$\\

 Hence, by Theorem (\ref{th3.1.1}), equation (\ref{eq3.1}) is stable if
  \begin{equation}
  c<0 \textit{ and } a_1 c<\frac{-\tan^2(\alpha\pi/2)-1}{4}.\label{eq3.1.24}
  \end{equation}

 Using the conditions (\ref{eq3.1.5}), (\ref{eq3.1.4}) and (\ref{eq3.1.24}), we sketch the stability region of equation (\ref{eq3.1}) in $a_1c$ plane as shown in Figure(\ref{fig1}).\\

Note that, in the first and third quadrant both the roots $s_1$ and $s_2$ are real and one of the root is positive. So, the first and third quadrants are unstable. The second quadrant is divided in two parts by the curve $\Gamma_1: a_1 c=-1/4$. on the right side of $\Gamma_1$ the roots $s_j$ of (\ref{eq3.1.4}) are real and negative (Case 1) whereas on the left side they are complex and satisfying (\ref{eq3.1.12}) (subcase 2.1). Therefore, we get the stable solutions of equation (\ref{eq3.1}) for all $a_1<0$ and $c>0$. \\

In the fourth quadrant on the left side of the curve $\Gamma_1$ both the roots are positive and hence the system is unstable. There exists one more curve $\Gamma_2: a_1 c=\frac{-\tan^2(\alpha\pi/2)-1}{4}$ such that on the right side of it, the $s_j$ are complex and their argument greater than $\alpha\pi/2$ (refer (\ref{eq3.1.24})). Between $\Gamma_1$ and $\Gamma_2$, the roots $s_j$ are complex but violating the condition (\ref{eq3.1.24}). Thus, this bounded region is unstable.\\
Note that, for any fixed $a_1>0$, $\Gamma_2(a_1,c)<\Gamma_1(a_1,c),\forall c<0.$\\

Furthermore, $\Gamma_1$ and $\Gamma_2$ will not intersect each other.\\
\begin{figure}
    \centering
    \includegraphics{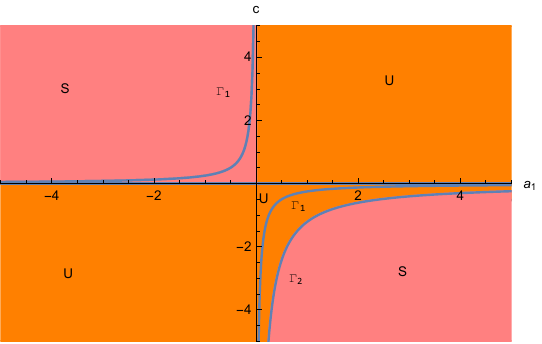}
    \caption{Stability region for the non-delayed equation (\ref{eq3.1}), where S=Stable Region and U=Unstable Region. }
    \label{fig1}
\end{figure}
\section{Two-term fractional delay differential equation (FDDE)}\label{sec3.4}
 The stability analysis of an autonomous delay differential equation
 \begin{equation}\label{eq3.2.1}
 D^\alpha x(t)+ c D^{\beta}x(t)=a x(t)+b x(t-\tau)
\end{equation} with $\alpha>\beta$  involving two fractional-order derivatives is given in \cite{bhalekar2019analysis}. In this work, we provide detailed stability and bifurcation analysis of the following FDDE:  
 \begin{equation}
 D^\alpha x(t)+ c D^{2\alpha}x(t)=a x(t)+b x(t-\tau)\label{eq3.2}
 \end{equation}
 and sketch the stable/unstable regions in the parameter planes.
 This system (\ref{eq3.2}) reduces to the system (\ref{eq3.1}) when $\tau=0$ with $a_1=a+b.$\\
 The characteristics equation \cite{bhalekar2019analysis} of the given FDDE (\ref{eq3.2}) is \\
 \begin{equation}
 \lambda^\alpha+ c \lambda^{2\alpha}=a+b \exp(-\lambda\tau).\label{eq3.3}
 \end{equation}
 There is a change in stability when the characteristics root $\lambda$ crosses the imaginary axis $\lambda=i v$, $(v>0)$ \cite{bhalekar2019analysis}.\\
 So, by substituting $\lambda=i v$ in the equation (\ref{eq3.3}), as in \cite{bhalekar2019analysis}, we get\\
 \begin{equation}
 (i v)^\alpha+ c (i v)^{2\alpha}=a+b \exp(-iv\tau_*),
 \end{equation}
 where $\tau_*$ is the critical value of delay where change in stability can occur.\\ 
 Separating the real and imaginary parts, we get
      \begin{equation}
      v^\alpha\cos\left(\frac{\alpha\pi}{2}\right)+ c v^{2\alpha}\cos(\alpha\pi)-a=b \cos(v\tau_*),\label{eq3.4}
      \end{equation}
      \begin{equation}
      v^\alpha \sin\left(\frac{\alpha\pi}{2}\right)+c v^{2\alpha}\sin(\alpha\pi)=-b\sin(v\tau_*).\label{eq3.5}
      \end{equation}
Squaring and adding \eqref{eq3.4} and \eqref{eq3.5}, we get
\begin{equation}\label{value of valpha}
v^{2\alpha}+c^2v^{4\alpha}+2 c v^{3\alpha}\cos(\alpha\pi/2)-2 a v^{\alpha}\cos(\alpha\pi/2)-2 a c v^{2\alpha}\cos(\alpha\pi)+a^2-b^2=0.
\end{equation} 
Furthermore, equation \eqref{eq3.4} gives 
\begin{equation}\label{critical value of tau with plus sign}
 \tau_+(n)(v)=\frac{2n\pi+\arccos(\frac{v^\alpha\cos(\frac{\alpha\pi}{2})+c v^{2\alpha}\cos(\alpha\pi)-a}{b})}{v^{1/\alpha}},\quad n=0,1,2,3,\ldots 
\end{equation}
and 
\begin{equation}\label{critical value of tau with minus sign}
 \tau_-(n)(v)=\frac{2n\pi-\arccos(\frac{v^\alpha\cos(\frac{\alpha\pi}{2})+c v^{2\alpha}\cos(\alpha\pi)-a}{b})}{v^{1/\alpha}},\quad n=1,2,\ldots.
\end{equation}
which are the critical values of delay for each root $v$ of the equation \eqref{value of valpha}. Note that, for given root $v$ of equation \eqref{value of valpha}, either $\tau_{+}(n)$ or $\tau_{-}(n)$ will be the critical value that will be decided by the equation \eqref{eq3.5}. So, we can take  $\tau_+(n)(v)$ if $\frac{v^\alpha\sin(\frac{\alpha\pi}{2})+c v^{2\alpha}\sin(\alpha\pi)}{-b}$ lies between $2k\pi$ to $(2k+1)\pi$ and $\tau_-(n)(v)$ are the critical values if LHS of \eqref{eq3.5} lies between $(2k+1)\pi$ to $(2k+2)\pi$, $k=0,1,2,\ldots$.

\subsection{ The conditions for the non existence of $\tau_*$ }
Critical value $\tau_*$ exists if and only if equations (\ref{eq3.4}) and (\ref{eq3.5}) are satisfied. Let us define the curves $L(v)= v^\alpha\cos\left(\frac{\alpha\pi}{2}\right)+ c v^{2\alpha}\cos(\alpha\pi)-a$ and\\  $R(v)=b \cos(v\tau_*)$.\\The curves $L(v)$ and $R(v)$ intersect each other if and only if the equation (\ref{eq3.4}) is satisfied. Note that if one of the equations (\ref{eq3.4}) and (\ref{eq3.5}) is not satisfied then the stability will remain same as the non-delayed case i.e. if the FDE system (\ref{eq3.1}) is stable (respectively, unstable), then the FDDE system (\ref{eq3.2}) will also be stable (respectively, unstable) for all $\tau\geq0$.\\
So, the non-existence of $\tau_*$ will give the existence of delay independent stability regions.\\
In this section, we provide some (sufficient) conditions for the nonexistence of the critical value $\tau_*$.
\begin{The}\label{th3.1}
If $c>0$, $0<\alpha\leq 1/2$ and $a<-|b|$, then there does not exists critical value $\tau_*$ and the stability of system (\ref{eq3.2}) is independent of delay $\tau.$
\end{The}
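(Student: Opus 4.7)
My plan is to reduce the nonexistence of $\tau_*$ to showing that equation (\ref{value of valpha}) has no positive real solution in $v$, and then to exhibit a polynomial of which the left-hand side of (\ref{value of valpha}) is a substitution, and whose coefficients are all strictly positive under the given hypotheses.

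More concretely, I would first recall that any critical delay $\tau_*$ arises from a purely imaginary root $\lambda = iv$ with $v>0$, and (\ref{value of valpha}) was derived by squaring and adding (\ref{eq3.4}), (\ref{eq3.5}). So if (\ref{value of valpha}) has no root $v>0$, then (\ref{eq3.4}) and (\ref{eq3.5}) cannot be simultaneously satisfied, which by the remarks preceding the theorem forces the stability of (\ref{eq3.2}) to coincide with that of the non-delayed system (\ref{eq3.1}) for every $\tau\geq 0$.

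Next I would change variables to $u=v^{\alpha}>0$, which turns (\ref{value of valpha}) into the quartic
\begin{equation*}
P(u)\;=\;c^{2}u^{4}+2c\cos(\alpha\pi/2)\,u^{3}+\bigl(1-2ac\cos(\alpha\pi)\bigr)u^{2}-2a\cos(\alpha\pi/2)\,u+(a^{2}-b^{2}).
\end{equation*}
Now I would check, coefficient by coefficient, that each of the five coefficients of $P(u)$ is strictly positive under the standing assumptions $c>0$, $0<\alpha\leq 1/2$, and $a<-|b|$. The range $0<\alpha\leq 1/2$ gives $\cos(\alpha\pi/2)>0$ and $\cos(\alpha\pi)\geq 0$; together with $c>0$ and $a<0$ this makes the $u^{3}$ and $u$ coefficients positive and makes $-2ac\cos(\alpha\pi)\geq 0$, so the $u^{2}$ coefficient is at least $1$. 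The constant term $a^{2}-b^{2}$ is positive precisely because $a<-|b|$ implies $|a|>|b|$. Therefore $P(u)>0$ for every $u>0$, so (\ref{value of valpha}) has no positive root $v$.

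The whole argument is essentially a sign bookkeeping exercise; I do not expect a genuine obstacle, only the bookkeeping itself. The one place to pause is the coefficient $1-2ac\cos(\alpha\pi)$, where the restriction $0<\alpha\leq 1/2$ (ensuring $\cos(\alpha\pi)\geq 0$) is really used; for $\alpha>1/2$ the cosine becomes negative and the sign of this coefficient is no longer automatic, which is presumably why the theorem is stated only for $0<\alpha\leq 1/2$. With positivity of $P(u)$ established, the final line of the proof simply invokes the dichotomy stated just before the theorem: no critical delay exists, so the stability type is the same as in the $\tau=0$ case.
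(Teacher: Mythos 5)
Your proposal is correct. The reduction is sound: a critical delay requires a simultaneous solution of \eqref{eq3.4} and \eqref{eq3.5} with $v>0$, any such solution is a positive root of \eqref{value of valpha}, and with $u=v^{\alpha}$ that equation is exactly your quartic $c^{2}u^{4}+2c\cos(\alpha\pi/2)u^{3}+\bigl(1-2ac\cos(\alpha\pi)\bigr)u^{2}-2a\cos(\alpha\pi/2)u+(a^{2}-b^{2})=0$; under $c>0$, $0<\alpha\le 1/2$, $a<-|b|$ every coefficient is positive (the $u^2$ coefficient is $\ge 1$ since $\cos(\alpha\pi)\ge 0$, and $a^{2}-b^{2}>0$), so there is no root $u>0$, and the conclusion follows from the dichotomy stated before the theorem. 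The route differs mildly from the paper's: the paper never touches the squared-and-added equation \eqref{value of valpha} here, but argues directly on \eqref{eq3.4}, showing that $L(v)=v^\alpha\cos(\alpha\pi/2)+cv^{2\alpha}\cos(\alpha\pi)-a>-a>|b|$ while the right-hand side $b\cos(v\tau)$ has modulus at most $|b|$, so \eqref{eq3.4} alone is unsatisfiable. The two arguments are equivalent in strength (the paper's bound $L(v)>|b|$ immediately forces $L^2+M^2>b^2$), with the paper's being slightly leaner since it needs only one of the two real/imaginary equations, while your polynomial version makes the roles of $\alpha\le 1/2$ and $a<-|b|$ visible coefficient by coefficient and reuses the already-derived equation \eqref{value of valpha}; only a cosmetic caveat is that your symbol $P(u)$ collides with the paper's later use of $P(\lambda)$ in \eqref{expression for p}, so it should be renamed if incorporated.
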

\begin{proof}
For $c>0$ and $0<\alpha\leq1/2$ we have $v^\alpha\cos\left(\frac{\alpha\pi}{2}\right)+ c v^{2\alpha}\cos(\alpha\pi)>0.$\\
So, $-a<-a+(v^\alpha\cos\left(\frac{\alpha\pi}{2}\right)+ c v^{2\alpha}\cos(\alpha\pi)).$
\begin{equation}
\Rightarrow -a<L(v).\label{eq3.6}
\end{equation}
Also by the assumption we have,
\begin{equation}
|b|<-a.\label{eq3.7}
\end{equation}
Therefore, from (\ref{eq3.6}) and (\ref{eq3.7}) we have,\\
$|b|<L(v).$\\
Further, the range of $R(v)$ is $(-|b|,|b|)$. Hence, there is no intersection point between the two curves $L(v)$ and $R(v)$ for any $v$, $\alpha$, $c$, $a$ and $b$.\\
Therefore equation (\ref{eq3.4}) is not satisfied. Hence $\tau_*$ does not exists.
\end{proof}
\textbf{Note 1:} Recall, $a_1=a+b$. In Theorem (\ref{th3.1}), the condition $a<-|b|$ is equivalent to $a_1<0$, if $b>0$ and $a_1<2b,$ if $b<0$.\\
This will be used to sketch the delay-independent stability region in Figure (\ref{fig4.1}).
\begin{The}\label{th3.2}
For $c<0$ and $0<\alpha<1/2$, there will be no delay dependent stability region if $|b|<a+\frac{1}{4c}\cos^2({\alpha\pi/2})\sec(\alpha\pi).$
\end{The}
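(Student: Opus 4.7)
The strategy parallels the proof of Theorem 3.1: show that the curves $L(v)$ and $R(v)$ from equation (\ref{eq3.4}) cannot intersect, so no critical delay $\tau_*$ exists. The twist here is that, unlike the setting of Theorem 3.1, the quadratic-type part of $L(v)$ no longer stays positive, because $c<0$ makes the coefficient $c\cos(\alpha\pi)$ negative (using $0<\alpha<1/2$, which also gives $\cos(\alpha\pi/2)>0$). So instead of pushing $L(v)$ above $|b|$, I will push it below $-|b|$ uniformly.

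The first step is to introduce the substitution $u=v^\alpha$, so that
\begin{equation*}
L(v)=u\cos(\alpha\pi/2)+c\,u^2\cos(\alpha\pi)-a,\qquad u\in(0,\infty).
\end{equation*}
Under the assumptions $c<0$ and $0<\alpha<1/2$, this is a concave-down parabola in $u$ (leading coefficient $c\cos(\alpha\pi)<0$). Its vertex lies at the positive value
\begin{equation*}
u^{*}=-\frac{\cos(\alpha\pi/2)}{2c\cos(\alpha\pi)},
\end{equation*}
and a direct calculation yields the global maximum
\begin{equation*}
L_{\max}=-\frac{\cos^{2}(\alpha\pi/2)\,\sec(\alpha\pi)}{4c}-a.
\end{equation*}
Since $c<0$, the first term is positive, but it is finite; meanwhile $L(u)\to-\infty$ as $u\to\infty$, so $L$ takes every value in $(-\infty,L_{\max}]$ on $(0,\infty)$.

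Next, I rewrite the hypothesis $|b|<a+\dfrac{\cos^{2}(\alpha\pi/2)\sec(\alpha\pi)}{4c}$ as
\begin{equation*}
L_{\max}=-\frac{\cos^{2}(\alpha\pi/2)\sec(\alpha\pi)}{4c}-a<-|b|.
\end{equation*}
Thus $L(v)\le L_{\max}<-|b|$ for every $v>0$. Because $R(v)=b\cos(v\tau_{*})\in[-|b|,|b|]$, the graphs of $L$ and $R$ lie in disjoint horizontal strips and cannot intersect. Consequently, equation (\ref{eq3.4}) admits no solution $(v,\tau_{*})$ with $v>0$, so no critical delay $\tau_{*}$ exists and, by the discussion preceding Theorem \ref{th3.1}, stability of (\ref{eq3.2}) coincides with that of the non-delayed system (\ref{eq3.1}); in particular there is no delay-dependent stability region.

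The only nontrivial step is the algebraic identification of $L_{\max}$ and the rearrangement showing that the hypothesis is exactly $L_{\max}<-|b|$; this is routine but must be carried out carefully because the sign of $c$ flips several inequalities. A small implicit observation that deserves a sentence is that the hypothesis silently forces $a>0$ (indeed, $a>|b|-\dfrac{\cos^{2}(\alpha\pi/2)\sec(\alpha\pi)}{4|c|}$ can still be negative a priori, but $L_{\max}<-|b|\le 0$ together with the positive first term in $L_{\max}$ forces $a>0$), which ties the delay-independent region back to the stability picture from Section \ref{sec3.1}.
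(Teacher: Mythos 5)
Your proposal is correct and follows essentially the same route as the paper: identify the global maximum $L_{\max}=-a-\frac{\cos^{2}(\alpha\pi/2)\sec(\alpha\pi)}{4c}$ of $L(v)$ (attained at $v^{\alpha}=-\cos(\alpha\pi/2)/(2c\cos(\alpha\pi))$), observe that the hypothesis is exactly $L_{\max}<-|b|$, and conclude that $L(v)$ stays strictly below the range of $R(v)=b\cos(v\tau_{*})$, so equation \eqref{eq3.4} has no solution and no critical delay $\tau_{*}$ exists. Your added remark that the hypothesis forces $a>0$ is a harmless extra observation not needed for the argument.
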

\begin{proof}Suppose $0<\alpha<1/2$ and $c<0$.
In this case, the curve $L(v)$ has local maxima at $v=(\frac{-\cos(\frac{\alpha\pi}{2})\sec(\alpha\pi)}{2 c})^{\frac{1}{\alpha}}$ and the maximum value is \\
$max=-a-\frac{\cos^2(\frac{\alpha\pi}{2})\sec(\alpha\pi)}{4 c}.$ \\
By assumption, $max<0.$\\
Therefore, $L(v)$ is negative for all $v>0.$\\
Further, by assumption, $-|b|>max$, where $-|b|$ is the minimum value of $R(v)$.\\
This shows that there is no intersection between the curves $L(v)$ and $R(v).$\\
Therefore, there does not exists any $\tau_*$ satisfying equation (\ref{eq3.4}). Hence there will be no delay dependent stability region.
\end{proof}
\begin{Note}\label{note1}
 Now, we utilize Theorem (\ref{th3.2}) and sketch the delay independent stability region in $a_1 c$-plane as below:\\
The condition $|b|<a+\frac{1}{4c}\cos^2({\alpha\pi/2})\sec(\alpha\pi)$ in Theorem (\ref{th3.2}) is equivalent to

\begin{eqnarray}\label{eq3.8}
\begin{cases}

a_1>\frac{-\cos^2(\frac{\alpha\pi}{2})\sec(\alpha\pi)}{4 c},\textit{ if } b<0&\\
\textsl{ and } a_1>\frac{-\cos^2(\frac{\alpha\pi}{2})\sec(\alpha\pi)}{4 c}+ 2 b,\textit{ if } b>0.
\end{cases}
\end{eqnarray}
We define the boundary curves of the regions in (\ref{eq3.8}) as\\

$\Gamma_3: a_1=\frac{-\cos^2(\frac{\alpha\pi}{2})\sec(\alpha\pi)}{4 c}$ and\\
$\Gamma_4: a_1=\frac{-\cos^2(\frac{\alpha\pi}{2})\sec(\alpha\pi)}{4 c}+2b$, respectively.\\
These regions are in the fourth quadrant of $a_1 c$-plane because $c<0$ and $0<\alpha<1/2$.
\end{Note}
Recall that, the boundary of stable region of non-delayed equation (\ref{eq3.1}) in this region is described by the curve $\Gamma_2$ (cf. Figure (\ref{fig1})).\\

We need to find the positions of $\Gamma_3$ and $\Gamma_4$ relative to $\Gamma_2$ to discuss the delay independent stability of equation (\ref{eq3.2}).\\

$\bullet$ $\Gamma_2$ \textit{ and } $\Gamma_3$:\\
Since $-(\tan^2(\alpha\pi/2)+1)>-\cos^2(\alpha\pi/2)\sec(\alpha\pi)$,\\
the curve $\Gamma_3$ cannot intersect $\Gamma_2$ and is on the right side of $\Gamma_2$ in the $a_1c$-plane.\\
Thus, the intersection between the stable region bounded by $\Gamma_2$ and the delay independent stable region bounded by $\Gamma_3$ is the region defined by the first inequality in (\ref{eq3.8}). This is sketched in Figure (\ref{fig4.1})(Fourth quadrant). In this region, the system (\ref{eq3.2}) is stable for all $\tau\geq 0$.\\

$\bullet$ $\Gamma_2$ \textit{ and } $\Gamma_4$: \\
Using the similar arguments, the intersection between the regions bounded by $\Gamma_2$ and $\Gamma_4$ is given by the second inequality in (\ref{eq3.8}).
This delay independent stable region is sketched in the fourth quadrant of Figure (\ref{fig4.1}).\\

Now, we provide some results for the case $1/2<\alpha<1.$\\
\begin{The}\label{th3.3}
The stability of system (\ref{eq3.2}) is independent of delay $\tau$ if $c<0$, $1/2<\alpha<1$ and $a<-|b|$.\\
\end{The}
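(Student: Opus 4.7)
The plan is to mirror the strategy used in Theorem \ref{th3.1}: show that under the stated hypotheses the curves $L(v)$ and $R(v)$ defined in Section \ref{sec3.4} cannot intersect, so equation (\ref{eq3.4}) has no positive solution, no critical delay $\tau_*$ exists, and the stability of the delayed equation (\ref{eq3.2}) must coincide with that of the non-delayed equation (\ref{eq3.1}) for every $\tau \geq 0$.

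First I would analyze the signs of the trigonometric factors appearing in $L(v)=v^{\alpha}\cos(\alpha\pi/2)+cv^{2\alpha}\cos(\alpha\pi)-a$ under the hypothesis $1/2<\alpha<1$. Since $\alpha\pi/2 \in (\pi/4,\pi/2)$ we have $\cos(\alpha\pi/2)>0$, and since $\alpha\pi \in (\pi/2,\pi)$ we have $\cos(\alpha\pi)<0$. Combined with $c<0$, the product $c\cos(\alpha\pi)$ is strictly positive. Hence for every $v>0$ both summands $v^{\alpha}\cos(\alpha\pi/2)$ and $cv^{2\alpha}\cos(\alpha\pi)$ are positive, so $L(v) > -a$.

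Next I would use the standing hypothesis $a<-|b|$, equivalently $-a>|b|$, to conclude $L(v)>|b|$ for all $v>0$. The range of $R(v)=b\cos(v\tau_*)$ is contained in $[-|b|,|b|]$, so $L(v)>R(v)$ for every $v>0$ and every $\tau_*>0$; equation (\ref{eq3.4}) is never satisfied. As observed in Section \ref{sec3.4}, failure of (\ref{eq3.4}) prohibits any crossing of the imaginary axis by a characteristic root, so the stability of (\ref{eq3.2}) agrees with that of the non-delayed equation (\ref{eq3.1}) uniformly in $\tau$, which is exactly the claim.

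I do not anticipate a genuine obstacle: the proof is structurally identical to that of Theorem \ref{th3.1}, and the only place where the new hypothesis $1/2<\alpha<1$ enters is in flipping the sign of $\cos(\alpha\pi)$ so that combining it with the new sign of $c$ still yields a positive contribution to $L(v)$. The one point that merits care is writing the sign chain cleanly to make the positivity of $L(v)+a$ transparent; thereafter the strict inequality $L(v)>|b|\geq R(v)$ closes the argument immediately.
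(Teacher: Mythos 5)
Your proposal is correct and follows the paper's own argument: the paper likewise notes that $v^{\alpha}\cos(\alpha\pi/2)+cv^{2\alpha}\cos(\alpha\pi)>0$ when $c<0$ and $1/2<\alpha<1$, deduces $|b|<L(v)$ from $a<-|b|$, and concludes that $L(v)$ and $R(v)$ cannot intersect, so no critical delay exists. You merely make the sign analysis of the cosine factors explicit, which the paper leaves implicit.
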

\begin{proof}
Note that $v^\alpha\cos\left(\frac{\alpha\pi}{2}\right)+ c v^{2\alpha}\cos(\alpha\pi)$ is always positive for $c<0$ and $1/2<\alpha<1.$\\

So, $-a<L(v).$\\
Therefore, $|b|<L(v)$ by assumption.\\
Since, $|b|$ is maximum value of $R(v),$ the two curves $L(v)$ and $R(v)$ will never intersect each other under the assumptions of this Theorem.\\
Hence, the stability region of system (\ref{eq3.2}) is independent of delay $\tau$.
\end{proof}
\begin{The}\label{th3.4}
If $c>0$, $1/2<\alpha<1$ then, the stability region is delay independent if $|b|<a+\frac{\cos^2(\alpha\pi/2)\sec(\alpha\pi)}{4 c}$.
\end{The}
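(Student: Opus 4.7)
My plan is to mirror the strategy used in Theorem \ref{th3.2}, since the hypothesis of Theorem \ref{th3.4} is precisely the analogue of that theorem with the signs of $c$ and the range of $\alpha$ swapped in a way that preserves the same structural features of the curve $L(v)=v^{\alpha}\cos(\alpha\pi/2)+cv^{2\alpha}\cos(\alpha\pi)-a$. The goal is again to show that under the hypothesis $|b|<a+\frac{\cos^{2}(\alpha\pi/2)\sec(\alpha\pi)}{4c}$, the graphs of $L(v)$ and $R(v)=b\cos(v\tau_{*})$ are disjoint for every $v>0$, so that equation (\ref{eq3.4}) has no solution and hence no $\tau_{*}$ can exist.

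First I would exploit the sign pattern forced by $1/2<\alpha<1$: here $\alpha\pi/2\in(\pi/4,\pi/2)$ so $\cos(\alpha\pi/2)>0$, while $\alpha\pi\in(\pi/2,\pi)$ gives $\cos(\alpha\pi)<0$. Together with $c>0$, this means $L(v)$ has a positive leading term at small $v$ and a negative term dominating at large $v$, so $L(v)\to-\infty$ as $v\to\infty$. Solving $L'(v)=\alpha v^{\alpha-1}\cos(\alpha\pi/2)+2\alpha c v^{2\alpha-1}\cos(\alpha\pi)=0$ yields the unique critical point
\begin{equation*}
v^{*}=\Bigl(\tfrac{-\cos(\alpha\pi/2)}{2c\cos(\alpha\pi)}\Bigr)^{1/\alpha},
\end{equation*}
which is well-defined and positive because $\cos(\alpha\pi)<0$. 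Checking the sign of $L''(v^{*})$ (both constituent terms are negative under the hypotheses) confirms this is a strict global maximum on $(0,\infty)$.

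Next I would substitute $v^{*}$ into $L$ and simplify; the algebra reduces, just as in Theorem \ref{th3.2}, to
\begin{equation*}
\max_{v>0}L(v)=L(v^{*})=-a-\frac{\cos^{2}(\alpha\pi/2)\sec(\alpha\pi)}{4c}.
\end{equation*}
The hypothesis $|b|<a+\frac{\cos^{2}(\alpha\pi/2)\sec(\alpha\pi)}{4c}$ then rewrites exactly as $-|b|>L(v^{*})$, so $L(v)\le L(v^{*})<-|b|$ for every $v>0$. Since the range of $R(v)$ is contained in $[-|b|,|b|]$, the two curves cannot intersect, so equation (\ref{eq3.4}) has no positive solution $v$, no critical delay $\tau_{*}$ exists, and by the principle stated just before Theorem \ref{th3.1} the stability of (\ref{eq3.2}) matches that of the non-delayed equation for every $\tau\geq 0$.

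I do not expect serious obstacles; the only points that deserve genuine care are verifying the sign conditions on $\cos(\alpha\pi/2)$, $\cos(\alpha\pi)$ and hence on $v^{*}$ (where the new range $1/2<\alpha<1$ matters and differs from Theorem \ref{th3.2}), and checking that $L''(v^{*})<0$ so that $v^{*}$ is indeed the maximum rather than a minimum. Consistency of the hypothesis, i.e.\ that $|b|\geq 0$ forces $a>-\frac{\cos^{2}(\alpha\pi/2)\sec(\alpha\pi)}{4c}>0$ and therefore $\max L<0$, is a quick by-product that I would note in passing but is not needed to close the argument.
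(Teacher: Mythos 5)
Your proposal is correct and follows exactly the route the paper intends: the paper gives no separate argument for Theorem \ref{th3.4}, stating only that the proof is analogous to that of Theorem \ref{th3.2}, and your write-up is precisely that analogue (maximum of $L(v)$ at $v^{*}=\bigl(\tfrac{-\cos(\alpha\pi/2)\sec(\alpha\pi)}{2c}\bigr)^{1/\alpha}$ with value $-a-\tfrac{\cos^{2}(\alpha\pi/2)\sec(\alpha\pi)}{4c}$, which the hypothesis places below $-|b|$, so $L$ and $R$ never meet and no critical delay exists). The sign checks you flag for $\cos(\alpha\pi/2)>0$, $\cos(\alpha\pi)<0$ and $L''(v^{*})<0$ are exactly the details that make the analogy go through.
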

\textbf{Note} Proof of Theorem (\ref{th3.4}) is analogous with that of Theorem (\ref{th3.2}).

Using the similar arguments as in Note (1) and Note (2), we sketch the delay independent stability regions of system (\ref{eq3.2}) in Figure \eqref{fig4.1} using Theorem (\ref{th3.3}) and Theorem (\ref{th3.4}) respectively. Note that, the system (\ref{eq3.2}) is unstable for all $\tau\geq 0$ if the conditions of Theorem (\ref{th3.3}) or Theorem (\ref{th3.4}) are satisfied.\\
\begin{figure}
	\subfloat[Delay independent stability region of equation (\ref{eq3.2}) for $b>0$ and $0<\alpha<1/2$]{%
		\includegraphics[scale=0.87]{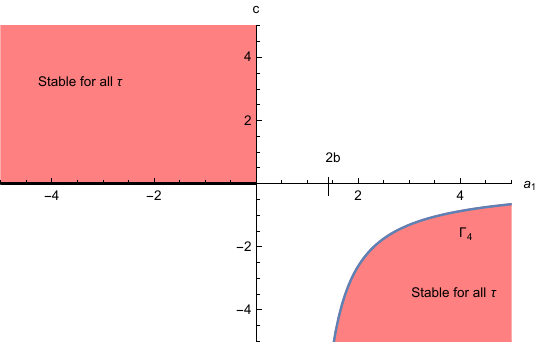}
	}\hspace{0.1cm}
	\subfloat[Delay independent stability region of equation (\ref{eq3.2}) for $b<0$ and $0<\alpha<1/2$]{%
		\includegraphics[scale=0.87]{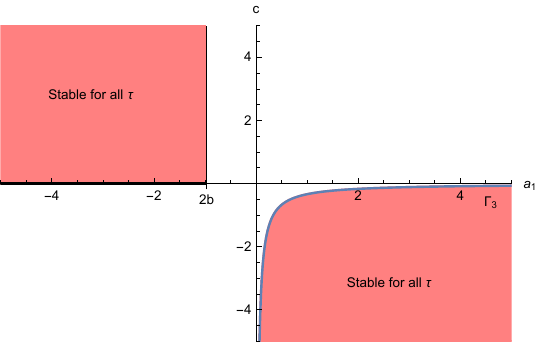}
	}\hspace{0.1cm}
	\subfloat[The delay independent stability region of equation (\ref{eq3.2}) for $b>0$ and $1/2<\alpha<1$]{%
		\includegraphics[scale=0.87]{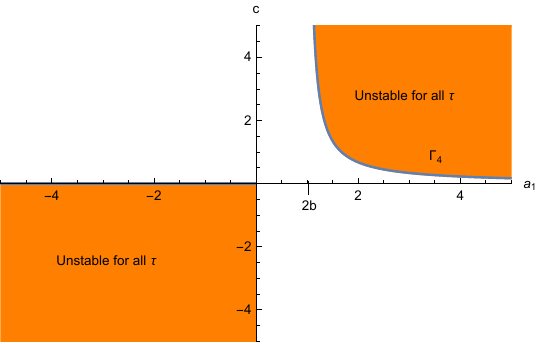}
	}\hspace{0.1cm}
	\subfloat[The delay independent stability region of equation (\ref{eq3.2}) for $b<0$ and $1/2<\alpha<1$]{%
		\includegraphics[scale=0.87]{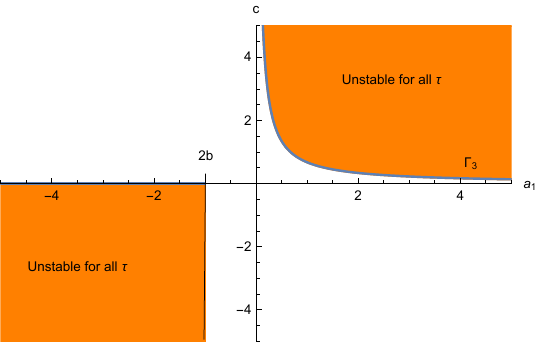}
	 }
\caption{}
	\label{fig4.1}
\end{figure}

\section{Geometrical method to find the stability}\label{sec3.5}
 If 
 \begin{equation}\label{expression for p}
 P(\lambda)=\dfrac{\lambda^\alpha+ c \lambda^{2\alpha}-a}{b}
 \end{equation}
  and 
  \begin{equation}\label{expression for q}
  Q(\lambda,\tau)=\exp{(-\lambda\tau)}
  \end{equation}
   then the characteristic equation \eqref{eq3.3} becomes
 \begin{equation}
 P(\lambda)=Q(\lambda,\tau).   
 \end{equation}
 Note that $\exists$ a characteristic root $\lambda_0$ if the graph of $P(\lambda)$ intersects the graph of $Q(\lambda,\tau)$ at $\lambda_0$, for some $\tau>0$. Moreover, the image set $\{Q(\lambda,\tau)\quad|\lambda\in\mathbb{C}, \tau>0\}$ will be a punctured unit disc in $\mathbb{C}$.\\
 If $\lambda\in\mathbb{R}$ (i.e. $Im(\lambda)=0$) then the graphs of $\mathcal{P}$ and $Q$ are subsets of $\mathcal{R}^2$. In this case, the intersection of these graphs at $\lambda>0$ will be sufficient condition for the instability of system \eqref{eq3.2}. This will be discussed in Section \eqref{section of positive real root}.\\
 If $Im(\lambda)\neq0$ then the graphs of $\mathcal{P}$ and $Q$ are in $\mathcal{C}^2$ and we cannot observe these intersections. However, we can compare the image sets of $\mathcal{P}$ and $Q$ with $Re(\lambda)>0$ and get some information on the stability of equation \eqref{eq3.2}. This will be provided in Section \eqref{sec3.6}. 
 \subsection{Existence of positive real root $\lambda$ }\label{section of positive real root}
 Now, for any real $\lambda>0$ and $\tau>0$, $0<exp(-\lambda\tau)\leq1$, i.e. the range of $Q(\lambda,\tau)=(0,1]$.\\
 Furthermore, if $\exists$ $\lambda>0$ such that $P(\lambda)\in(0,1]$ then we can find some $\tau$ such that $P(\lambda)=Q(\lambda,\tau)$. Therefore, this $\lambda$ is positive real root of characteristic equation.
 
  Note that the existence of a positive real characteristic root is sufficient for the instability of system \eqref{eq3.2}.
% \end{frame}
 %\begin{frame} 
 If $\exists$ $\lambda>0$ such that $P(\lambda)\in(0,1]=Range(Q)$ then we can find some $\tau$ such that $P(\lambda)=Q(\lambda,\tau)$. Therefore, this $\lambda$ is positive real root of characteristic equation.\\
 
% \end{frame}
 %\begin{frame} 

    \begin{figure}
    \centering
    \includegraphics[scale=0.8]{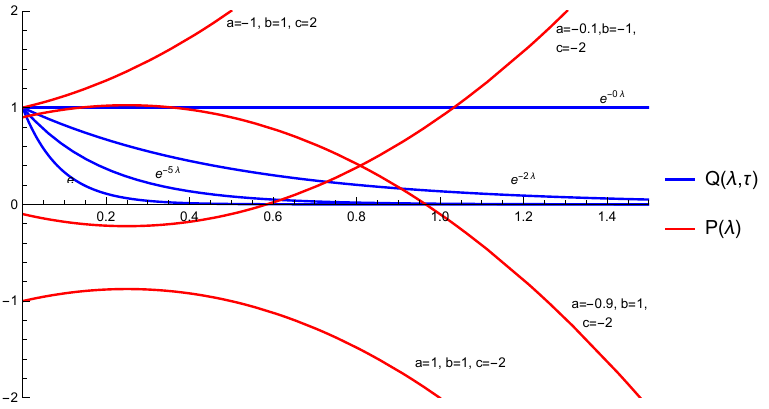}
    \caption{ Graph of $P(\lambda)$ and $Q(\lambda,\tau)$ with respect to $\lambda$}
  % \label{fig1}
\end{figure} 
% \end{frame}
%\begin{frame}
\begin{itemize}
\item Various conditions under which the characteristic equation has a positive real root are described below.
\begin{enumerate}
\item[(1)] If Range($P$)$\supset(0,1]$ then system \eqref{eq3.2} is unstable $\forall\tau>0$.
\item[(2)] If $P(0)=-a/b<1$ and $P(\lambda)=1$ for some $\lambda>0$ then system \eqref{eq3.2} is unstable $\forall$ $\tau\geq0$ even if $(0,1]$ is not the subset of Range(P) as shown in Figure (\ref{maingraph1}).
\end{enumerate}
\begin{figure}
     \centering
     \begin{subfigure}[b]{0.38\textwidth}
         \centering
         \includegraphics[width=\textwidth]{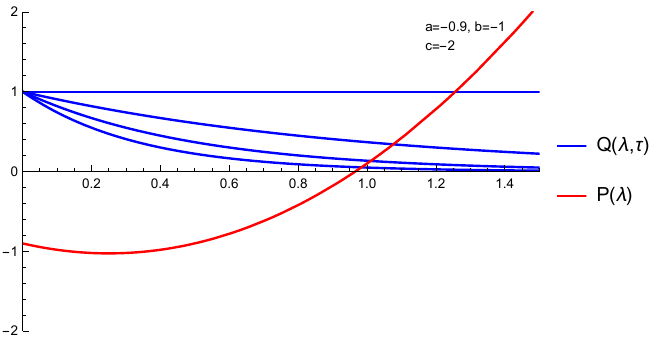}
         \caption{$\frac{-a}{b}<0<1$}
      %   \label{fig:y equals x}
     \end{subfigure}
     \hfill
     \begin{subfigure}[b]{0.37\textwidth}
         \centering
         \includegraphics[width=\textwidth]{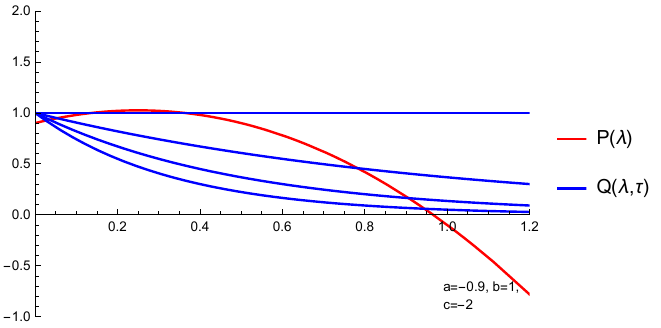}
         \caption{$0<\frac{-a}{b}<1$ and $P(\lambda)\shortrightarrow -\infty$}
       %  \label{fig:three sin x}
     \end{subfigure}
     \hfill
     \begin{subfigure}[b]{0.34\textwidth}
         \centering
         \includegraphics[width=\textwidth]{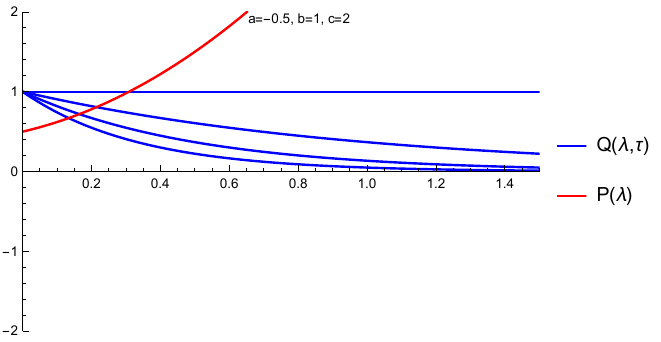}
         \caption{$0<\frac{-a}{b}<1$ and $P(\lambda)\shortrightarrow\infty$}
        % \label{fig:five over x}
     \end{subfigure}
        \caption{ $-a/b<1 $ and $P(\lambda)=1$ has one positive root $\lambda$ for some $\lambda$}
       \label{maingraph1}
\end{figure} 
\end{itemize}
  \begin{enumerate}
      \item[(3)] If $P(0)=-a/b>1$ and $P(\lambda)=0$ for some $\lambda>0$ then also Range($P$)$\supset(0,1]$ (cf. Figure (\ref{maingrapf2})) and system \eqref{eq3.2} is unstable $\forall$ $\tau\geq0$. 
  \end{enumerate}
  \begin{figure}
     \centering
     \begin{subfigure}[b]{0.36\textwidth}
         \centering
         \includegraphics[width=\textwidth]{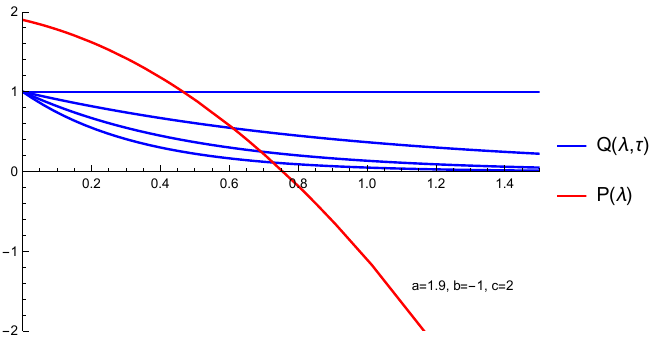}
         \caption{$\frac{-a}{b}>1$ and $P(\lambda)\shortrightarrow -\infty$}
       %  \label{fig:y equals x}
     \end{subfigure}
     \hfill
     \begin{subfigure}[b]{0.36\textwidth}
         \centering
         \includegraphics[width=\textwidth]{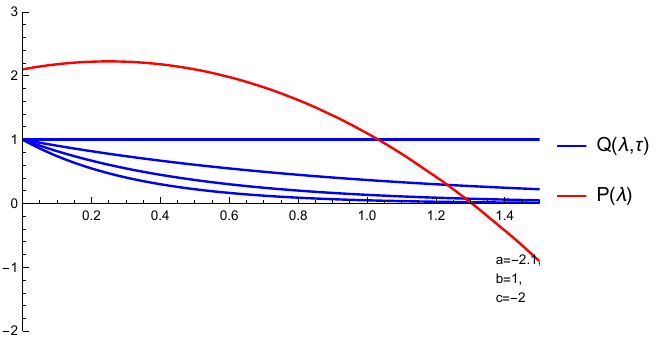}
         \caption{$\frac{-a}{b}>1$ and $P(\lambda)$ has a local maxima}
      %   \label{fig:three sin x}
     \end{subfigure}
     \hfill
     \begin{subfigure}[b]{0.37\textwidth}
         \centering
\includegraphics[width=\textwidth]{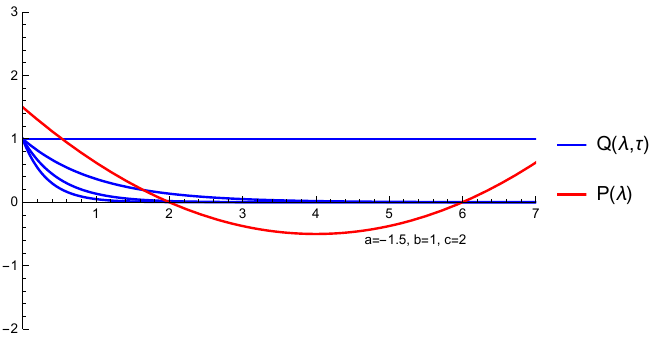}
         \caption{$\frac{-a}{b}>1$ and $P(\lambda)$ has a local minima}
      %   \label{fig:five over x}
     \end{subfigure}
        \caption{ $-a/b>1$ and $P(\lambda)=0$ for some $\lambda$}
       \label{maingrapf2}
\end{figure} 
     \begin{enumerate}
         \item [(4)] If $P(0)=-a/b>1$ and the curve $P(\lambda)$ has local minima between $0$ to $1$ then there exists critical value $\tau_*$ such that $Q(\lambda,\tau_*)$ touches $\mathcal{P}(\lambda)$ and equation \eqref{eq3.2} is unstable for $0<\tau<\tau_*$(Figure \eqref{maingrapf3}). 
         \end{enumerate}
     \begin{figure}
  \centering
    \includegraphics[scale=1.3]{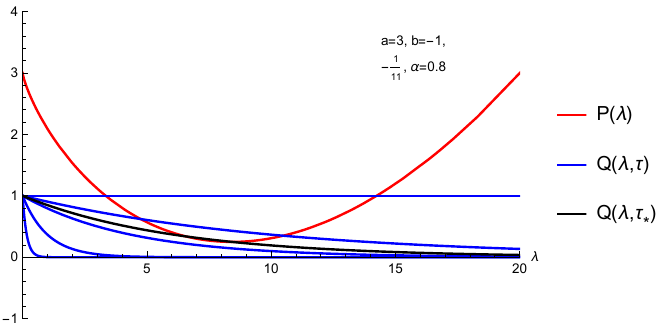}
    \caption{ $-a/b>1$ and $P(\lambda)$ for some $\lambda>0$ has a local minima between 0 to 1}
    \label{maingrapf3}
\end{figure}

% \end{frame}
 %\begin{frame}
 \begin{enumerate}
     \item [(5)] If $P(0)=-a/b<1$ and the curve $P(\lambda)$ for some $\lambda>0$ has local maxima lies between 0 to 1 then also there exists critical value $\tau_*$ such that for all $\tau>\tau_*$ the curve $P(\lambda)$ intersects with $Q(\lambda,\tau)$ (Figure(\ref{maingrapf4})). Hence, equation \eqref{eq3.2} is unstable for $\tau>\tau_*$.
 \end{enumerate}
    \begin{figure}
  \centering
    \includegraphics[scale=0.9]{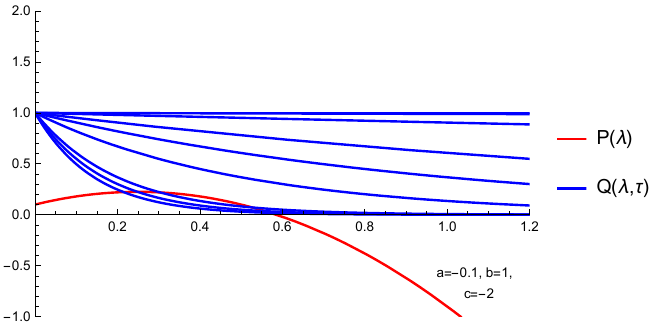}
    \caption{ $-a/b<1$ and $P(\lambda)$ for some $\lambda>0$ has a local maxima between 0 to 1}
   \label{maingrapf4}
\end{figure}  
% \end{frame}
 %\begin{frame}    
\begin{The}\label{drcthm3.1}
	The equation \eqref{eq3.2} is unstable $\forall\tau\geq 0$ if any one of the following conditions hold:
	\begin{itemize}
		\item[(i)] $c>0$, $b<0$ and $\frac{-a}{b}>1$.
		\item[(ii)] $c>0$, $b>0$ and $\frac{-a}{b}<1$.
		\item[(iii)] $c<0$, $b<0$ and $\frac{-a}{b}<1$.
		\item[(iv)] $c<0$, $-b<a<\frac{-1}{4 c}$ and $b<0$.
	\end{itemize}
\end{The}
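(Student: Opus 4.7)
The strategy is to apply the geometric criterion from Section \ref{section of positive real root}: if there exists $\lambda>0$ with $P(\lambda)\in(0,1]$, then for a suitable $\tau\geq 0$ the characteristic equation has a positive real root, and we can upgrade this to instability for all $\tau\geq 0$ by showing $(0,1]$ is contained in the range of $P$ restricted to $\lambda>0$ (i.e., item (1) of the enumeration, or its realizations in items (2)/(3)). The overall workflow is: (a) compute $P(0)=-a/b$; (b) determine $\lim_{\lambda\to\infty}P(\lambda)$ from the sign of the leading coefficient $c/b$; (c) invoke the intermediate value theorem to cross either the level $1$ (when $P(0)<1$) or the level $0$ (when $P(0)>1$); (d) conclude via the relevant itemized condition.

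\textbf{Cases (i)--(iii).} These are handled by direct sign analysis. In case (i), $b<0$ gives $-a/b>1\iff a>-b>0$, and $c>0$ forces $P(\lambda)\to-\infty$ as $\lambda\to\infty$ (numerator $\to+\infty$, denominator negative); since $P$ is continuous with $P(0)>1$ and $P(\lambda)\to-\infty$, IVT yields some $\lambda>0$ with $P(\lambda)=0$, triggering condition (3) of the enumeration. In case (ii), $b>0$ and $c>0$ give $P(\lambda)\to+\infty$, while $P(0)=-a/b<1$, so IVT produces $\lambda>0$ with $P(\lambda)=1$, i.e., condition (2). In case (iii), $c<0$ and $b<0$ make the leading term $c\lambda^{2\alpha}/b$ positive, so again $P(\lambda)\to+\infty$; combined with $P(0)=-a/b<1$ this is condition (2) once more.

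\textbf{Case (iv) is the main obstacle.} Here $c<0$, $b<0$, so $P(\lambda)\to-\infty$ as $\lambda\to\infty$ (the numerator $f(\lambda):=\lambda^\alpha+c\lambda^{2\alpha}-a$ tends to $-\infty$, divided by negative $b$ gives $-\infty$... need to recheck sign). Writing $P=f/b$ with $b<0$: $f(\lambda)\to-\infty$ and $b<0$ give $P(\lambda)\to+\infty$. However the assumption $-b<a$ combined with $b<0$ yields $-a/b>1$, so $P(0)>1$. Thus we cannot conclude directly by monotonicity; instead we must show $P$ dips down to hit $0$, equivalently $f$ attains a positive value. Computing $f'(\lambda)=\alpha\lambda^{\alpha-1}(1+2c\lambda^\alpha)$, the unique positive critical point is $\lambda^{*}=(-1/(2c))^{1/\alpha}$, a maximum of $f$, and
\begin{equation*}
f(\lambda^{*})=-\frac{1}{4c}-a.
\end{equation*}
The hypothesis $a<-1/(4c)$ gives $f(\lambda^{*})>0$. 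Since $f(0)=-a<0$ (as $a>-b>0$), IVT yields a $\lambda_{0}\in(0,\lambda^{*})$ with $f(\lambda_{0})=0$, i.e., $P(\lambda_{0})=0$. This places us again in condition (3) of the itemized list, so equation \eqref{eq3.2} is unstable for all $\tau\geq 0$.

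\textbf{Wrap-up.} In every case the graph of $P$ on $(0,\infty)$ crosses the horizontal strip $(0,1]=\mathrm{Range}(Q(\cdot,\tau))$ for each $\tau\geq 0$, producing a positive real characteristic root and hence instability independent of $\tau$. The only delicate step is case (iv), where the sharp threshold $a<-1/(4c)$ is precisely what ensures the maximum of $f$ exceeds $0$; the other three cases follow from end-point analysis alone.
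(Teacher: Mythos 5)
Your proposal is correct and follows essentially the same route as the paper: every case is reduced to the geometric criterion of Section \ref{section of positive real root} (items (2) and (3)), by producing a positive root of $P(\lambda)=1$ when $P(0)=-a/b<1$ and a positive root of $P(\lambda)=0$ when $P(0)=-a/b>1$. The only cosmetic difference is that you certify these roots via the intermediate value theorem (limits of $P$ at infinity, and in case (iv) the value $-\tfrac{1}{4c}-a$ of the numerator at its critical point $\lambda^\alpha=-\tfrac{1}{2c}$), whereas the paper writes the roots of the quadratic in $\lambda^\alpha$ explicitly; the two verifications are equivalent.
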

\begin{proof}
	(i) Given that $P(0)>1$.
	For $b<0$ and $\frac{-a}{b}>1$ we have $a>0$.\\
	Since $c>0$ and $a>0$, $1+4ac>1$. Hence, one root of $P(\lambda)=0$ i.e. $\big(\frac{-1+\sqrt{1+4a c}}{2c}\big)^\frac{1}{\alpha}$ is positive. Therefore, Range($P)\supset(0,1]$. So, $P(\lambda)$ intersects with $Q(\lambda,\tau)$ for every $\tau\geq0$ as shown in Figure (\ref{maingrapf2}). Therefore, system \eqref{eq3.2} is unstable $\forall$ $\tau\geq0$.
	
	\vspace{0.2cm}
	(ii)  	We have $-a/b<1$ and $b>0$ so $a+b>0$. Hence for $c>0$, $1+4(a+b)c>1$. Therefore, $P(\lambda)=1$ has one positive root viz. $\Big(\frac{-1+\sqrt{1+4c(a+b))}}{2c}\Big)^\frac{1}{\alpha}$.
	So, the curve $P(\lambda)$ intersects $Q(\lambda,\tau)$  $\forall\tau\geq0$ by the Figure (\ref{maingraph1}) and system \eqref{eq3.2} is unstable.
	
	\vspace{0.2cm}
	(iii) If $P(\lambda)=1$ has any positive root and $P(0)=\frac{-a}{b}<1$ then, from Figure(\ref{maingraph1}) the curve $P(\lambda)$ will intersect with $Q(\lambda,\tau)$ for every $\tau\geq0$ and $\lambda>0$.  
	Also, by solving $P(\lambda)=1$ we get
	\begin{equation*}
		\lambda=\Big(\frac{-1\pm \sqrt{1+4 c(a+b) }}{2 c}\Big)^\frac{1}{\alpha}.
	\end{equation*}
	Since $-a/b<1$, we get $a+b<0$. Furthermore, by assumption $c<0$.
	 So we have $1+4 c (a+b)>0$. Hence,  $\frac{-1-\sqrt{1+4 c(a+b)}}{2c}>0$. \\
	This implies that $P(\lambda)=1$ has a positive root. Hence, system \eqref{eq3.2} is unstable $\forall\tau>0$.
	
	\vspace{0.2cm}
	(iv) We have $-b<a<\frac{-1}{4c}$\\
	$\Rightarrow 1+4 a c>0$.\\
	Hence, for $c<0$ one root of $P(\lambda)=0$ i.e. $\frac{-1-\sqrt{1+4 a c}}{2c}$ is always positive. Hence, by the Figure(\ref{maingrapf2}) the curve $P(\lambda)$ intersects with $Q(\lambda,\tau)$ for every $\tau\geq0$. So, the system \eqref{eq3.2} is unstable $\forall \tau\geq0$.

\end{proof}

     \begin{The}\label{drcthm1}
         When $c<0$, $b>0$ and $-\infty<a<\frac{-1}{4c}-b$ then system \eqref{eq3.2} is unstable $\forall\tau\geq 0$
     \end{The}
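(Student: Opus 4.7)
The plan is to invoke item (1) in the enumerated list of sufficient conditions preceding the theorem, i.e., to show that under the stated hypotheses the range of $P(\lambda)$ on $\lambda>0$ contains the interval $(0,1]$. Once that inclusion is in place, a positive real characteristic root of \eqref{eq3.3} will exist for every $\tau\ge 0$, and instability follows.

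First I would analyse the shape of $P$. Differentiating \eqref{expression for p} gives
\[
P'(\lambda)=\frac{\alpha\lambda^{\alpha-1}}{b}\bigl(1+2c\lambda^{\alpha}\bigr),\qquad \lambda>0.
\]
With $c<0$ and $b>0$, the sign of $P'$ is governed by $1+2c\lambda^{\alpha}$, which vanishes at the unique positive value $\lambda^\ast=\bigl(-\tfrac{1}{2c}\bigr)^{1/\alpha}$, is positive for $\lambda<\lambda^\ast$, and is negative for $\lambda>\lambda^\ast$. Hence $\lambda^\ast$ is the global maximum of $P$ on $(0,\infty)$, and a direct substitution shows
\[
P(\lambda^\ast)=\frac{1}{b}\Bigl(-\tfrac{1}{4c}-a\Bigr).
\]
The hypothesis $a<-\tfrac{1}{4c}-b$ rewrites as $-\tfrac{1}{4c}-a>b$, and dividing by the positive number $b$ yields $P(\lambda^\ast)>1$.

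Next I would observe that the leading term of $P$ for large $\lambda$ is $c\lambda^{2\alpha}/b$, which tends to $-\infty$ because $c<0$ and $b>0$. Combined with the continuity of $P$ and $P(\lambda^\ast)>1$, the intermediate value theorem applied on $[\lambda^\ast,\infty)$ forces $P$ to attain every value in $(-\infty,P(\lambda^\ast)]$, so in particular $\text{Range}(P)\supset(0,1]$.

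To conclude instability for each $\tau\ge 0$, I would then compare $P$ with $Q(\lambda,\tau)=e^{-\lambda\tau}$. Since $0<Q(\lambda,\tau)\le 1$ for $\lambda\ge 0$, and $P$ decreases continuously from the value $P(\lambda^\ast)>1$ to $-\infty$ on $[\lambda^\ast,\infty)$ while $Q(\cdot,\tau)$ stays inside $(0,1]$, the difference $P-Q$ changes sign on $(\lambda^\ast,\infty)$ and thus produces a positive real root $\lambda$ of $P(\lambda)=Q(\lambda,\tau)$. By the geometric criterion (1), equation \eqref{eq3.2} is unstable for every $\tau\ge 0$. The only nontrivial step is the identification of $P(\lambda^\ast)$ and the algebraic rearrangement of the hypothesis into $P(\lambda^\ast)>1$; the rest is routine monotonicity and intermediate value theorem bookkeeping.
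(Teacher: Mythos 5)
Your proof is correct, but it takes a genuinely different route from the paper. The paper argues by cases on the sign of $a_1=a+b$: for $a<-b$ it exhibits the explicit positive root $\bigl(\frac{-1-\sqrt{1+4ac}}{2c}\bigr)^{1/\alpha}$ of $P(\lambda)=0$ together with $P(0)>1$; for $a=-b$ it handles $P(0)=1$ separately; and for $-b<a<\frac{-1}{4c}-b$ it exhibits the positive root $\bigl(\frac{-1-\sqrt{1+4c(a+b)}}{2c}\bigr)^{1/\alpha}$ of $P(\lambda)=1$ together with $P(0)<1$, in each case appealing to the catalogued figures to conclude that $P$ meets $Q(\cdot,\tau)$ for every $\tau\ge0$. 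You instead avoid all case-splitting by locating the global maximum of $P$ on $(0,\infty)$ at $\lambda^\ast=\bigl(-\tfrac{1}{2c}\bigr)^{1/\alpha}$, computing $P(\lambda^\ast)=\tfrac{1}{b}\bigl(-\tfrac{1}{4c}-a\bigr)$, and observing that the hypothesis is exactly $P(\lambda^\ast)>1$; monotone decrease to $-\infty$ on $[\lambda^\ast,\infty)$ and the intermediate value theorem then give $\mathrm{Range}(P)\supset(0,1]$ and a positive real characteristic root for every $\tau\ge0$, including $\tau=0$. Your argument is more unified and makes the threshold $a=\frac{-1}{4c}-b$ transparent as precisely the condition $P(\lambda^\ast)=1$, which also explains why the regime $\frac{-1}{4c}-b<a<\frac{-1}{4c}$ treated in the paper's next theorem yields only delay-dependent instability (the maximum then lies in $(0,1)$); the paper's case analysis, on the other hand, ties each subcase to its explicit root formulas and the corresponding figures used elsewhere in the section. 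Both proofs are valid; yours replaces the figure-based intersection claims with a self-contained monotonicity and IVT argument.
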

     \begin{proof}
     
        \textbf{Step-1} We have $P(0)=\frac{-a}{b}>1$. If we could prove that $P(\lambda)=0$ has a positive real root $\lambda$ then from Figure(\ref{maingrapf2}) the curve $P(\lambda)$ will intersects with $Q(\lambda,\tau)$ for every $\tau\geq0$. 
If $c<0$ then  $\frac{-1-\sqrt{1+4a c}}{2 c}$ is positive because $a<0$ and $c<0$ so $1+4 a c>1$.
 Therefore, the root $\lambda=\Big(\frac{-1-\sqrt{1+4a c}}{2 c}\big)^\frac{1}{\alpha}$ of $P$ is positive. So, Range($P\supset(0,1]$.\\
 Hence, in this case $P(\lambda)$ intersects with $Q(\lambda,\tau)$ for every $\tau\geq 0$.\\
\textbf{Step-2} Consider $a_1=0$ or $a=-b$ in this case $P(0)=1$. Also $a=-b$ implies that $a$ is a negative number. So, $\frac{-1-\sqrt{1+4 a c}}{2c}$ is positive for $c<0$. Hence, $P(\lambda)=0$ has one positive root. So, the system \eqref{eq3.2} is unstable for all $\tau\geq0$ as the graph of $P(\lambda)$ intersects with the curves $Q(\lambda,\tau)$ for every $\tau\geq0$.\\ 
\textbf{Step-3} We have $-b<a$ so $P(0)=\frac{-a}{b}<1$. Also, $a<\frac{-1}{4c}-b$ therefore, $1+4 c a_1>0$. Hence, $\frac{-1-\sqrt{1+4 c(a+b)}}{2c}>0$ for $c<0$. So, one root of $P(\lambda)=1$ i.e. $\Big(\frac{-1-\sqrt{1+4 c(a+b)}}{2c}\Big)^\frac{1}{\alpha}$ is positive. Therefore, by the Figure \eqref{maingraph1} the curve $P(\lambda)$ intersects with the curves $Q(\lambda,\tau)$ for every $\tau\geq0$. Hence, system \eqref{eq3.2} is unstable for all $\tau\geq0$.     
     \end{proof}

\begin{The}
 If $0<a+b<\frac{-1}{4 c}<a$ and $b<0$ then $\exists$ $\tau_*=\frac{-\log(\frac{-1-4ac}{4 bc})}{\Big(\frac{-1}{2c}\Big)^\frac{1}{\alpha}}$ such that the system is unstable if $0\leq\tau<\tau_*.$       
\end{The}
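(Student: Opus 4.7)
The strategy is to apply the geometric framework of Section \ref{sec3.5}: we will show that for every $\tau\in[0,\tau_*)$ the curves $P(\lambda)$ and $Q(\lambda,\tau)$ must intersect at some positive real $\lambda$, which by the discussion in Section \ref{section of positive real root} forces a positive real characteristic root and hence instability.

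First I would translate the hypotheses. Since $0<a+b<\tfrac{-1}{4c}$ forces $\tfrac{-1}{4c}>0$, we must have $c<0$, and combined with $b<0$ and $a>-b>0$ this also gives $a>0$ and $1+4ac<0<1+4c(a+b)$. Since $b<0$, the inequality $a+b>0$ is equivalent to $P(0)=-a/b>1$, and since $c<0<\tfrac{c}{b}$, we have $P(\lambda)\to+\infty$ as $\lambda\to\infty$. Next I would locate the minimum of $P$ on $(0,\infty)$: writing $g(\lambda)=\lambda^{\alpha}+c\lambda^{2\alpha}-a$, a direct derivative computation gives a unique critical point $\lambda_{\min}=(-1/(2c))^{1/\alpha}$, which is a maximum of $g$ and (since $b<0$) a minimum of $P$, with
\[
P_{\min}=P(\lambda_{\min})=\frac{-\tfrac{1}{4c}-a}{b}=\frac{-1-4ac}{4bc}.
\]

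The key arithmetic step is then to verify $0<P_{\min}<1$. The lower bound $P_{\min}>0$ comes from $a>-\tfrac{1}{4c}$ (numerator and denominator both negative), and the upper bound $P_{\min}<1$ is equivalent, after clearing the positive denominator $4bc>0$, to $1+4c(a+b)>0$, which is precisely the hypothesis $a+b<-\tfrac{1}{4c}$. With this, $P$ on $[0,\infty)$ starts at $P(0)>1$, decreases continuously to the value $P_{\min}\in(0,1)$ at $\lambda_{\min}$, and then increases to $+\infty$.

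Now I would define $\tau_*$ by the tangency condition $e^{-\lambda_{\min}\tau_*}=P_{\min}$; solving for $\tau_*$ reproduces the formula in the statement. For any $\tau\in[0,\tau_*)$ we have $Q(\lambda_{\min},\tau)=e^{-\lambda_{\min}\tau}>P_{\min}=P(\lambda_{\min})$, while at the endpoint $\lambda=0$ we have $Q(0,\tau)=1<P(0)$. By the Intermediate Value Theorem applied to the continuous function $P(\lambda)-Q(\lambda,\tau)$ on $[0,\lambda_{\min}]$, there exists $\lambda_1\in(0,\lambda_{\min})$ with $P(\lambda_1)=Q(\lambda_1,\tau)$. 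Thus $\lambda_1$ is a positive real root of the characteristic equation \eqref{eq3.3}, and by the criterion in Section \ref{section of positive real root} this suffices to conclude that \eqref{eq3.2} is unstable for every such $\tau$.

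The only non-routine part of the argument is the pair of algebraic inequalities showing $0<P_{\min}<1$; once these hold, the geometric picture is forced and the IVT step is immediate. In particular, no subtle sign-tracking is needed beyond the initial observation that the hypotheses compel $c<0$.
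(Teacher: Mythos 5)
Your proposal is correct and follows essentially the same route as the paper: locate the minimum of $P$ at $\lambda=\left(\tfrac{-1}{2c}\right)^{1/\alpha}$ with value $\tfrac{-1-4ac}{4bc}$, use the hypotheses to show this value lies in $(0,1)$, and define $\tau_*$ by the tangency condition $e^{-\lambda_{\min}\tau_*}=P_{\min}$. The only difference is cosmetic: where the paper concludes by appealing to the geometric picture in Figure \ref{maingrapf3}, you make the intersection of $P$ and $Q$ explicit via the Intermediate Value Theorem on $[0,\lambda_{\min}]$, which is a slightly more rigorous rendering of the same argument.
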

\begin{proof}
Since $b<0$, the function $P$ has local minima at $\lambda=\Big(\frac{-1}{2c}\Big)^\frac{1}{\alpha}$ and the minimum value of $P$ is $\frac{-1-4 a c}{4 bc}$.\\ 
Also, by the assumption, $P(0)=-a/b>1$, we can expect such $\tau_*$ if the minimum value will lies from 0 to 1.  \\
We have $\frac{-1}{4c}<a$ and $c<0$ so $-1-4ac>0$. Therefore, for $b<0$, $\frac{-1-4 a c}{4 bc}>0$ i.e. the minimum value is positive. \\
Also, $0<a+b<\frac{-1}{4 c}$ we have $4c(a+b)>-1$. 
Therefore, $\frac{-1-4 ac}{4 b c}<1$ which shows that the minimum of $P$ lies between $0$ to $1$. Hence, there exists $\tau_*$ such that
\begin{equation*}
\frac{-1-4ac}{4bc}=e^{-\Big(\frac{-1}{2c}\Big)^{1/\alpha}\tau_*}.
\end{equation*}
Taking log both side we get, \\
$\tau_*=\frac{-\log(\frac{-1-4ac}{4 bc})}{\Big(\frac{-1}{2c}\Big)^\frac{1}{\alpha}}$. So, for every $0\leq\tau<\tau_*$ $P(\lambda)$ intersects $Q(\lambda,\tau)$ for some $\lambda>0$. Hence, the system will be unstable $\forall 0\leq\tau<\tau_*$ by Figure(\ref{maingrapf3}).
\end{proof}
\begin{The}
If $c<0$, $b>0$, $a>-b$ and $\frac{-1}{4 c}-b<a<\frac{-1}{4 c}$ then 
\begin{equation}\label{drc6.1.1} 
\tau_*=\frac{-\log(\frac{-1-4ac}{4 bc})}{\Big(\frac{-1}{2c}\Big)^\frac{1}{\alpha}}
\end{equation}
such that $\tau>\tau_*$ the system \eqref{eq3.2} is unstable.
\end{The}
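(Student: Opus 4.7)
The plan is to recognise this statement as the mirror image of the preceding theorem: with $b>0$ instead of $b<0$, the critical point of $P$ becomes a local maximum rather than a minimum, which puts us in case (5) of the geometric classification (Figure \ref{maingrapf4}). Accordingly, I aim to show that this local maximum lies strictly in $(0,1)$ and that $P(0)<1$, and then to exhibit, for each $\tau>\tau_*$, a positive real root of $P(\lambda)=Q(\lambda,\tau)$ via the intermediate value theorem.

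First I locate the unique positive critical point of $P$. Setting $y=\lambda^\alpha$, the numerator of $P$ is the quadratic $cy^2+y-a$, whose vertex sits at $y=-1/(2c)>0$; hence $\lambda_*=\bigl(-1/(2c)\bigr)^{1/\alpha}$ and $P(\lambda_*)=(-1-4ac)/(4bc)$. Since the quadratic opens downward ($c<0$) and we divide through by the positive $b$, this is a local maximum of $P$, and the dominance of the $c\lambda^{2\alpha}/b$ term drives $P(\lambda)\to-\infty$ as $\lambda\to\infty$.

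Next I verify the placement of $P(\lambda_*)$ from the four hypotheses. The assumption $a<-1/(4c)$ combined with $c<0$ yields $1+4ac>0$, so $-1-4ac<0$; dividing by the negative $4bc$ gives $P(\lambda_*)>0$. The assumption $-1/(4c)-b<a$ rearranges to $4c(a+b)<-1$, i.e.\ $-1-4ac>4bc$, and dividing by $4bc<0$ flips the inequality to give $P(\lambda_*)<1$. Finally $a>-b$ with $b>0$ gives $P(0)=-a/b<1$. Thus $P$ starts below $1$ at $\lambda=0$, rises to a local maximum $P(\lambda_*)\in(0,1)$, and then decreases to $-\infty$.

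With these ingredients the definition of $\tau_*$ in \eqref{drc6.1.1} is exactly the solution of $e^{-\lambda_*\tau_*}=P(\lambda_*)$, which is well posed because the right-hand side lies in $(0,1)$. For any $\tau>\tau_*$ we then have $Q(\lambda_*,\tau)=e^{-\lambda_*\tau}<P(\lambda_*)$, while $Q(0,\tau)=1>P(0)$; by continuity of $P$ and $Q(\cdot,\tau)$ on $[0,\lambda_*]$, the intermediate value theorem produces some $\lambda\in(0,\lambda_*)$ with $P(\lambda)=Q(\lambda,\tau)$, yielding a positive real root of the characteristic equation \eqref{eq3.3} and hence instability of \eqref{eq3.2}. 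The main obstacle I anticipate is purely the bookkeeping of signs when extracting $0<P(\lambda_*)<1$, since every manipulation is against the negative quantities $c$ and $bc$ and one must be careful about which inequalities flip; beyond that, the geometric picture in Figure \ref{maingrapf4} makes the intersection argument essentially routine.
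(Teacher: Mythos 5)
Your proof is correct and follows essentially the paper's own route: locate the local maximum of $P$ at $\lambda_*=\bigl(\tfrac{-1}{2c}\bigr)^{1/\alpha}$, use the hypotheses to show $P(0)=-a/b<1$ and $0<P(\lambda_*)=\tfrac{-1-4ac}{4bc}<1$, and read off $\tau_*$ from $e^{-\lambda_*\tau_*}=P(\lambda_*)$, which is exactly case (5) of the paper's geometric classification. Your only refinement is to replace the paper's appeal to Figure \ref{maingrapf4} with an explicit intermediate-value argument on $[0,\lambda_*]$ (and your sign bookkeeping is cleaner than the paper's, which contains a typo stating ``$b<0$ and $c>0$'' where $b>0$, $c<0$ is meant).
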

\begin{proof} If $b>0$ the curve $P(\lambda)$ gives local maxima at $\lambda=(\frac{-1}{2c})^{-1/\alpha}$ and the maximum value is $\frac{-1-4ac}{4 bc}$.\\
Since, $a<\frac{-1}{4 c}$ so for $c<0$ we get $-1-4ac<0$. Therefore, $\frac{-1-4ac}{4 bc}>0$ for $b<0$ and $c>0$. Hence, the maximum value is greater than 0.\\
Also, by assumption, $\frac{-1}{4 c}<a+b$ so for $c<0$,\\
$\Rightarrow -1>4 c(a+b)$\\
$\Rightarrow \frac{-1-4ac}{4 b c}<1$.\\
So, the maximum value lies in between 0 to 1. Hence, here also there exists a $\tau_*$ given by equation \eqref{drc6.1.1} such that for all $\tau>\tau_*$, $P(\lambda)$ intersects $Q(\lambda,\tau)$ for some $\lambda>0$ from Figure(\ref{maingrapf4}). So, equation \eqref{eq3.2} is unstable $\forall$ $\tau>\tau_*$.
\end{proof}
Using these Theorems we get the delay independent stability results as:
\begin{figure}
	\subfloat[When $b>0$ and $0<\alpha<1/2$]{%
		\includegraphics[scale=0.7]{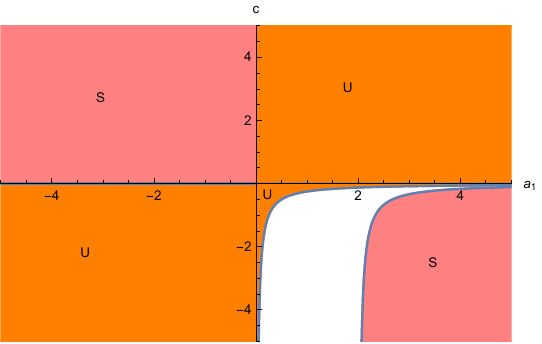}
	}\hspace{0.1cm}
	\subfloat[ When $b<0$ and $0<\alpha<1/2$]{%
		\includegraphics[scale=0.7]{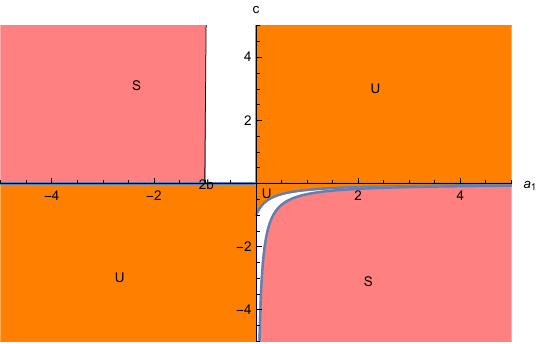}
	}\hspace{0.1cm}
	\subfloat[ When $b>0$ and $1/2<\alpha<1$]{%
		\includegraphics[scale=0.7]{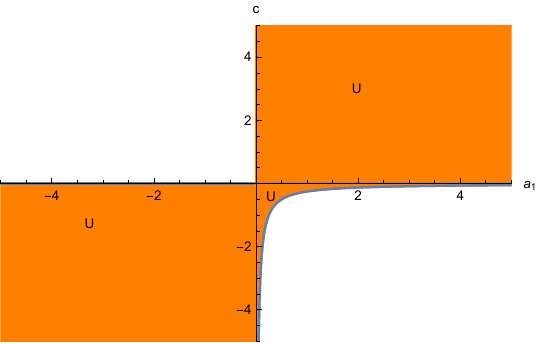}
	}\hspace{0.1cm}
	\subfloat[ When $b<0$ and $1/2<\alpha<1$]{%
		\includegraphics[scale=0.7]{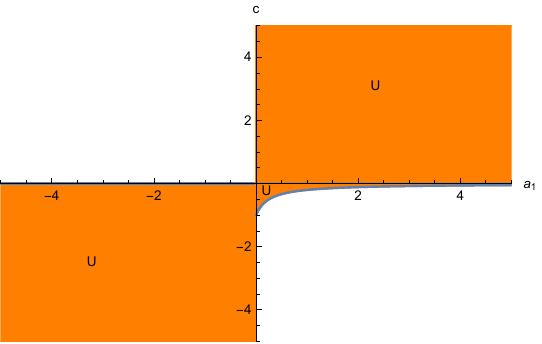}
	 }	
\caption{}
	\label{fig4.2}
\end{figure}

%\begin{figure}
  %  \centering
 %   \includegraphics[scale=0.9]{gh11.pdf}
  %  \caption{When $b>0$ and $0<\alpha<1/2$}
  %  \label{drcfig1}
%\end{figure}
%\begin{figure}
 %   \centering
 %   \includegraphics[scale=0.9]{gh12.pdf}
  %  \caption{ When $b<0$ and $0<\alpha<1/2$}
 %   \label{fig1}
%\end{figure}
%\begin{figure}
    %\centering
 %   \includegraphics[scale=0.9]{gh13.pdf}
 %   \caption{ When $b>0$ and $1/2<\alpha<1$}
 %   \label{drcfig2}
  %  \end{figure}
  %  \begin{figure}
 %   \centering
 %   \includegraphics[scale=0.9]{gh14.pdf}
 %   \caption{ When $b<0$ and $1/2<\alpha<1$}
 %   \label{fig2}
  %  \end{figure}
\section{Complex root $\lambda$ with positive real part}\label{sec3.6}
Now, we consider $\lambda=u+iv\in \mathbb{C}$, with $u>0$. Note that, the boundary of the set $\widehat{Q}=\{Q(\lambda,\tau)\quad|\quad\lambda=u+i v, u>0,\tau\geq0\}$ is the unit circle $x^2+y^2=1$ in $\mathbb{C}$.\\
If $|P(\lambda)|>1$, $\forall \lambda=u+i v$, $u>0$ then the image set $\mathcal{P}=\{P(\lambda)|\lambda=u+iv\in\mathbb{C}, u>0\}$ will not intersect the set $\widehat{Q}$. Therefore, the characteristic equation will not have any root $\lambda$ with positive real part.\\
Therefore, the bifurcation curve in the parameter plane is obtained when the $\mathcal{P}$ touches the $\widehat{Q}$, (where $\partial\mathcal{P}$ and $\partial\widehat{Q}$ are boundaries of the sets $\mathcal{P}$ and $\widehat{Q}$ respectively.)\\
In this case, the sets $\mathcal{P}$ and $\widehat{Q}$ have a common tangent in $\mathbb{C}$. 
\subsection{Method to find the common tangent}\label{section to find the common tangent}
Consider $P(\lambda)$ given by equation \eqref{expression for p}. For $\lambda=u+iv\in \mathbb{C}$, ${\lim_{u\to\infty}}Re(P(u+iv))$ goes to $\infty$ or $-\infty$ depending on the sign of $b$ and $c$. If $b$ and $c$ are of same sign then ${\lim_{u\to\infty}}Re(P(u+iv))\to \infty$ and the region of $\mathcal{P}$ is unbounded at the right end and bounded at the left as shown in Figure \eqref{complex root when b negative c negative} whereas, if $b$ and $c$ are of opposite sign then ${\lim_{u\to\infty}}Re(P(u+iv))\to -\infty$ and $\mathcal{P}$ is unbounded at the left and bounded at the right(cf. Figure(\ref{p(iv) bounded from right and unbounded from left})). 

The boundary of the set $\mathcal{P}$ is at $u=0$ i.e. $\partial \mathcal{P}=\{P(i v)|v\in\mathbb{R}\}$.\\
Similarly, $\partial \widehat{Q}=\{Q(iv\tau)|v\in\mathbb{R},\tau>0\}$. The unit circle $x^2+y^2=1$ is the boundary of $\widehat{Q}$.
Since, $\partial\mathcal{P}$ is symmetric about x-axis, we may take $v\geq0$. Moreover, note that the initial point of $\partial\mathcal{P}$ i.e. $P(0)=\frac{-a}{b}$. depends on $a$ and $b$ only. Therefore, the region $\mathcal{P}$ moves in horizontal direction if we change the parameter values $a$ and $b$ as shown in Figures \eqref{for the different sign of b and c behaviour of a} and \eqref{for the same sign of b and c behaviour of a}. \\
 \begin{figure}
     \centering
     \begin{subfigure}[b]{0.49\textwidth}
         \centering
         \includegraphics[width=\textwidth]{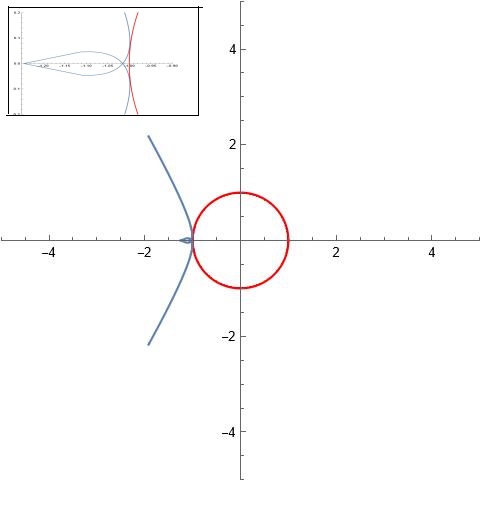}
         \caption{$\partial\mathcal{P}$ touches $\widehat{Q}$ from the left side}
         \label{for the same sign of b and c decrease a}
     \end{subfigure}
     \hfill
     \begin{subfigure}[b]{0.49\textwidth}
         \centering
         \includegraphics[width=\textwidth]{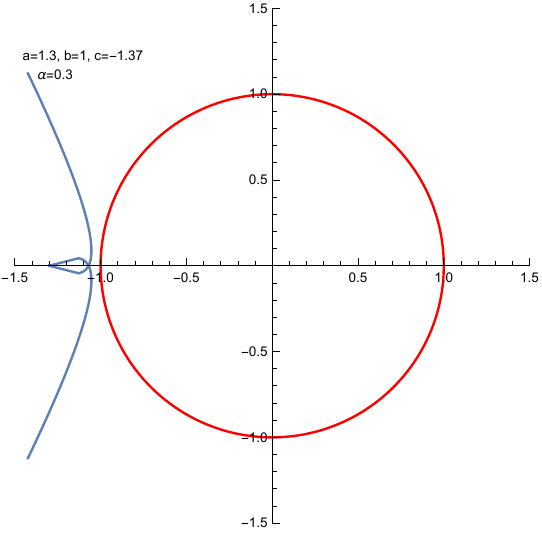}
         \caption{$\partial\mathcal{P}$ goes away from $\widehat{Q}$ as we increase $a$}
   % \label{for the same sign of b and c behaviour of a}
     \end{subfigure}
     \caption{Region $\mathcal{P}$ is on the left side of $\partial\mathcal{P}$}
     \label{for the different sign of b and c behaviour of a}
\end{figure} 
 \begin{figure}
     \centering
     \begin{subfigure}[b]{0.49\textwidth}
         \centering
         \includegraphics[width=\textwidth]{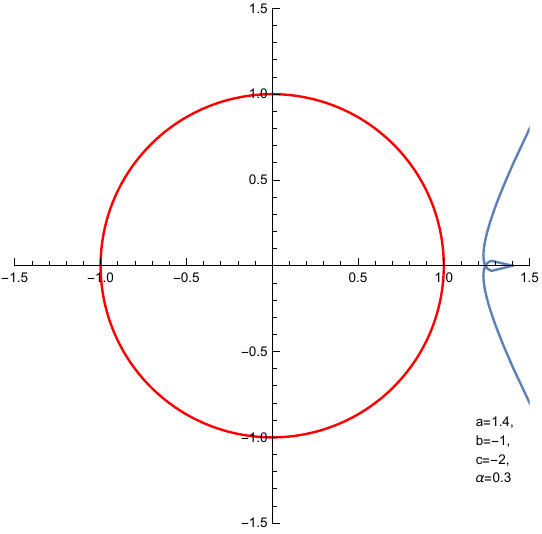}
         \caption{If we increase $a$, $\partial\mathcal{P}$ goes away $\widehat{Q}$ }
       %  \label{fig:y equals x}
     \end{subfigure}
     \hfill
     \begin{subfigure}[b]{0.49\textwidth}
         \centering
         \includegraphics[width=\textwidth]{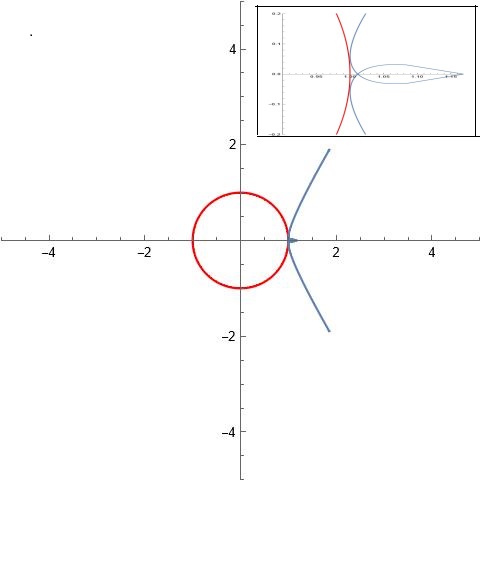}
         \caption{$\partial\mathcal{P}$ touches $\widehat{Q}$ from the right side}
        \label{for the same sign of b and c increase a}
     \end{subfigure}
     \caption{Region $\mathcal{P}$ is on the right side of $\partial\mathcal{P}$}
      \label{for the same sign of b and c behaviour of a}
\end{figure} 
If $P(iv)=x+i y$ then by separating the real and imaginary part  we get
\begin{align}
x=\dfrac{v^\alpha\cos(\frac{\alpha \pi}{2})+c v^{2\alpha}\cos(\alpha \pi)-a}{b}\quad\text{and}\label{eq3.5.2.2}
\end{align}
\begin{align}
y=\dfrac{v^\alpha\sin(\frac{\alpha \pi}{2})+c v^{2\alpha}\sin(\alpha \pi)}{b}.\label{eq3.5.2.1}
\end{align}
 Since $\partial \mathcal{P}$ and $\partial \widehat{Q}$ (unit circle) have a point say $(x,y)$ in common, it should satisfy the equations \eqref{eq3.5.2.2}, \eqref{eq3.5.2.1} and the equation of the unit circle
\begin{equation}\label{circleequation}
x^2+y^2=1.
\end{equation} 
In this case, at the point $(x,y)$, the slopes of the tangents to $\partial \mathcal{P}$ and $\partial\widehat{Q}$ should match.\\
We find the expressions for the tangent curve in different cases, by eliminating $v$ between equations \eqref{eq3.5.2.2} and \eqref{eq3.5.2.1} and equating the slopes.
\subsection{Bifurcation analysis for various values of $b$ and $\alpha$ }
In this section, we find the bifurcation curves separating the delay independent stable/ unstable region with the delay dependent stable/ unstable region. We consider four cases viz. $b>0$ and $0<\alpha<1/2$, $b<0$, $0<\alpha<1/2$, $b>0$ and $1/2<\alpha<1$ and $b<0$ and $1/2<\alpha<1$. In the section \eqref{section3.8}, we provide bifurcation curves separating different behaviors in the delay dependent stable/ unstable regions. 
\subsubsection{ When $b>0$ and $\alpha$ between $0$ to $1/2$ }\label{when b>0 and 0<alpha<1/2}
 
  Solving \eqref{eq3.5.2.1}, we get $v^\alpha=\dfrac{\csc(\alpha \pi)\Big(-\sin(\frac{\alpha\pi}{2})\pm\sqrt{\sin^2(\frac{\alpha\pi}{2})+4 b c y\sin(\alpha\pi)}\Big)}{2 c}.$
 Note that, the stability analysis of system \eqref{eq3.2} in the I, II and III  quadrants of $a_1c$-plane for $b>0$ and $0<\alpha<1/2$ is provided in Theorems \eqref{th3.1}, \eqref{drcthm3.1}(ii) and \eqref{drcthm1}. The only remaining part is fourth quadrant i.e. $a_1>0$ and $c<0$.\\  

So, for $c$ negative 
\begin{equation}\label{value of v for c<0}
v^\alpha=\dfrac{\csc(\alpha \pi)\Big(-\sin(\frac{\alpha\pi}{2})-\sqrt{\sin^2(\frac{\alpha\pi}{2})+4 b c y\sin(\alpha\pi)}\Big)}{2 c}.
\end{equation}
 
 Putting this value of $v^\alpha$ in equation \eqref{eq3.5.2.2}, we get the equation of $\partial\mathcal{P}$ as
 \begin{equation}\label{curveequation}
 \begin{split}
\dfrac{1+4 a c +4 b c x+\csc(\frac{\alpha\pi}{2})\sqrt{\sin^2(\frac{\alpha\pi}{2})+4 b c y \sin(\alpha\pi)}}{4c}-\\
 \dfrac{\cot(\alpha\pi)\csc(\alpha\pi)\big(\sin(\frac{\alpha\pi}{2})+\sqrt{\sin^2(\frac{\alpha\pi}{2})+4 b c y \sin(\alpha\pi)}\big)^2}{4 c}=0.
\end{split}
\end{equation}  
 Equating the slopes of $\partial \widehat{Q}$  and $\partial\mathcal{P}$ using \eqref{circleequation} and \eqref{curveequation}, we get the equation of their common tangent as  
\begin{equation}\label{tangenteq}
\frac{x}{y}+\frac{\csc(\frac{\alpha\pi}{2})\sqrt{\sin^2(\frac{\alpha\pi}{2})+4 b c y \sin(\alpha\pi)}}{-\cot(\frac{\alpha\pi}{2})+\cot(\alpha\pi)\Big(1+\csc(\frac{\alpha\pi}{2})\sqrt{\sin^2(\frac{\alpha\pi}{2})+4 b c y \sin(\alpha\pi)}\Big)}=0.
\end{equation}
Furthermore, we want the tangency conditions in terms of parameters $a_1$ and $c$. So, we eliminate $x$ and $y$ from the equations \eqref{circleequation}, \eqref{curveequation} and \eqref{tangenteq} to find the bifurcation curve $\Gamma_5$ in the $a_1c$-plane. On the right side of $\Gamma_5$, there will not be any intersection between $\mathcal{P}$ and $\widehat{Q}$ (cf. Figure (\ref{stable for all tau})) and the system \eqref{eq3.2} is stable, $\forall \tau\geq0$. The region bounded by the curves $\Gamma_2$ and $\Gamma_5$ (see Figure \eqref{mainfig4.1} ), is stable for $\tau=0$. We observed that, there exists a positive root of $v$ by the equation \eqref{value of valpha} in this region. Corresponding to that positive root of $v$ we have critical values of $\tau$ given by equations \eqref{critical value of tau with plus sign} and \eqref{critical value of tau with minus sign}.Note that at the smallest critical value of $\tau_*$ $Re(\frac{d\lambda}{d\tau}|_u=0)>0$. If $Re(\frac{d\lambda}{d\tau}|_u=0)<0$ at the smallest critical value $\tau_*$ then we must have some characteristic root which is moving from right to left half plane at $\tau_*$. Since $\nexists$ any characteristic root in the right half plane it cannot possible. This shows that, the system \eqref{eq3.2} is unstable for some $\tau\geq0$. There are two behaviors viz SSR and SS. The details will be given in Section \eqref{section3.8}.\\
First, we solve \eqref{curveequation} for $y$. For $c<0$, the expression of y is given in the data set-1 accompanying this article. All the data sets in this paper are also available at \url{https://drive.google.com/drive/folders/147KhyNARmYlQhIt5caLDqGK9GHqvz2Yb?usp=sharing}.\\
Further, we put this y in equation \eqref{circleequation} and solve it for $x$. At the end,  put these values of $x$ and $y$ in the tangent equation \eqref{tangenteq} to obtain the required bifurcation curve $\Gamma_5$ in terms of $a_1$, $b$, $c$ and $\alpha$ for $b>0$ and $0<\alpha<1/2$ is given in the data set 1. \\
In the Note \eqref{note1} after Theorem \eqref{th3.2}, we obtained a curve $\Gamma_4$ in the fourth quadrant of $a_1c$-plane such that the system \eqref{eq3.2} is stable on the right side of $\Gamma_4$. Now, we show that, the curve $\Gamma_5$ improves that stable region i.e. the exact value of bifurcation occurs at $\Gamma_5$ which is on the left of $\Gamma_4$. We observed that the curve $\Gamma_5$ is always on the left side of $\Gamma_4$. Therefore, we can ignore $\Gamma_4$ and consider $\Gamma_5$ as the bifurcation curve in the fourth quadrant, in this case. 
\begin{figure}
     \centering
     \begin{subfigure}[b]{0.45\textwidth}
         \centering
         \includegraphics[width=\textwidth]{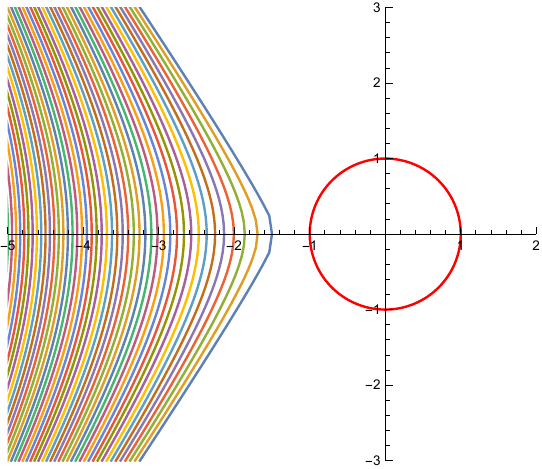}
         \caption{System \eqref{eq3.2} is stable for all $\tau\geq0$}
        \label{stable for all tau}
     \end{subfigure}
     \hfill
     \begin{subfigure}[b]{0.45\textwidth}
         \centering
         \includegraphics[width=\textwidth]{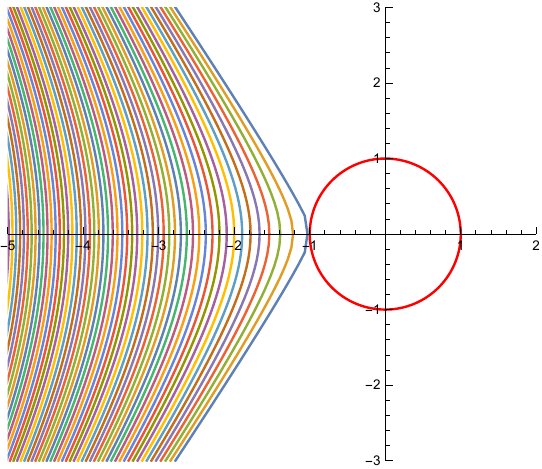}
         \caption{System is stable for all $\tau$ upto the tangent equation}
         \label{stable upto this point}
     \end{subfigure}
     \hfill
     \begin{subfigure}[b]{0.45\textwidth}
         \centering
         \includegraphics[width=\textwidth]{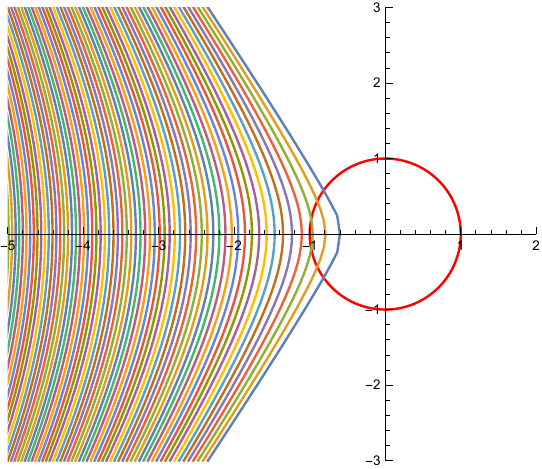}
         \caption{Stability depends on delay parameter}
         \label{stablity depend on delay}
     \end{subfigure}
     \hfill
      \caption{ Different behavior of $P(\lambda)$ when $\lambda$ is a complex root and $b$ and $c$ are of opposite sign}
       \label{p(iv) bounded from right and unbounded from left}
\end{figure} 
 
 \subsubsection {We consider $b<0$ and $0<\alpha<1/2$.}\label{positive complex root for b negative and alpha between 0 to half}
     \begin{itemize}
 \item[\textbf{Case 1:}] $c>0$
  \end{itemize}  
  
  The Theorem \eqref{th3.1} in Section \eqref{sec3.4} and Theorem \eqref{drcthm3.1}(a) in Section \eqref{section of positive real root} provide the stability analysis except for $2b<a_1<0$ or $b<a<-b$. We discuss this region in following subcases. 
  \begin{itemize}
 \item[\textbf{Subcase 1.1:}]  $b<a<0$
  \end{itemize} 
 \textbf{Subcase: When $b<a<0$}
  \begin{equation}\label{dlambda by dtau}
  Re\Big[\dfrac{1}{\alpha}\dfrac{d\lambda}{d\tau}\Big|_{u=0}\Big]=v^{2\alpha}+(2)c^2v^{4\alpha}-a v^\alpha\cos(\frac{\alpha\pi}{2})-2a c v^{2\alpha}\cos(\alpha\pi)+3 c v^{3\alpha}\cos(\frac{\alpha\pi}{2})
 \end{equation}
 
The sign of $\dfrac{1}{\alpha}\dfrac{d\lambda}{d\tau}$ \cite{bhalekar2019analysis} is positive for $a<0$, $c>0$ and $0<\alpha<1/2$.\\
\begin{itemize}
 \item[\textbf{Subcase 1.2:}]  $0<a<-b$
  \end{itemize} 

 We have from equation (16) given in \cite{bhalekar2019analysis}
 \begin{equation} \label{rewrite}
 -2 a c v^{2\alpha}\cos(\alpha\pi)-a v^{\alpha}\cos(\frac{\alpha\pi}{2})=-v^{2\alpha}-c^{2}v^{4\alpha}-2 c v^{3\alpha}\cos(\frac{\alpha\pi}{2})+av^{\alpha}\cos(\frac{\alpha\pi}{2})+b^2-a^2
 \end{equation}
 Putting \eqref{rewrite} in \eqref{dlambda by dtau} we get 
 \begin{equation*}
 Re\Big[\dfrac{1}{\alpha}\dfrac{d\lambda}{d\tau}\Big|_{u=0}\Big]=c^2v^{4\alpha}+c v^{3\alpha}\cos(\frac{\alpha\pi}{2})+a v^\alpha\cos(\frac{\alpha\pi}{2})+b^2-a^2>0
 \end{equation*}
Therefore, $Re\Big[\dfrac{1}{\alpha}\dfrac{d\lambda}{d\tau}\Big|_{u=0}\Big]>0$ in both cases. Hence, by Theorem 3.1 given in \cite{bhalekar2019analysis} system \eqref{eq3.2} undergoes Hopf bifurcation at
\begin{equation}\label{smallest critical value of delay}
 \tau_*=\frac{\arccos(\frac{v^{\alpha}\cos(\frac{\alpha\pi}{2})+c v^{2\alpha}\cos(\alpha\pi)-a}{b})}{v}
 \end{equation}
 such that the system \eqref{eq3.2} is stable for all $0<\tau<\tau_*$ and unstable for $\tau>\tau_*$ in the region $2b<a_1<0$ when $b<0$, $c>0$ and $0<\alpha<1/2$ (cf. Figure \eqref{mainfig4.2}).
 \begin{itemize}
 \item[\textbf{Case 2:}] $c<0$
 \end{itemize}
 Theorem \eqref{drcthm3.1}(c) shows that the 3rd quadrant is unstable $\forall \tau\geq0$.\\
 Let us consider the fourth quadrant in the $a_1c$-plane. Since, $b<0$ and $c<0$, we have ${\lim_{\lambda\to\infty}}Re(P(\lambda))\to \infty$ i.e. the region bounded by $P(i v)$ is on the right side of $\partial\mathcal{P}$ as shown in Figure (\ref{complex root when b negative c negative}). As in Section \eqref{section to find the common tangent}, we have a bifurcation curve $\Gamma_6$ which is obtained by using the condition for the curves $\partial\mathcal{P}$ and $\partial\widehat{Q}$ to have a common tangent.\\
 The expression for $\Gamma_6$ is given in the data set 2 accompanying this paper.\\
 If we take the parameter values $(a_1,c)$ on the right of $\Gamma_6$ then $\mathcal{P}$ does not intersect $\widehat{Q}$ and the system \eqref{eq3.2} is stable for all $\tau\geq0$. We observe that, the bifurcation curve $\Gamma_3$ obtained in Section \eqref{sec3.4} is always on the right of $\Gamma_6$ (as shown in Figure \eqref{mainfig4.1}). We already have shown that the system \eqref{eq3.2} is stable $\forall\tau\geq0$ on the right of $\Gamma_3$. Therefore, $\Gamma_6$ provides a better estimate for the bifurcation curve and we can ignore $\Gamma_3$.
  \begin{figure}
     \centering
     \begin{subfigure}[b]{0.38\textwidth}
         \centering
         \includegraphics[width=\textwidth]{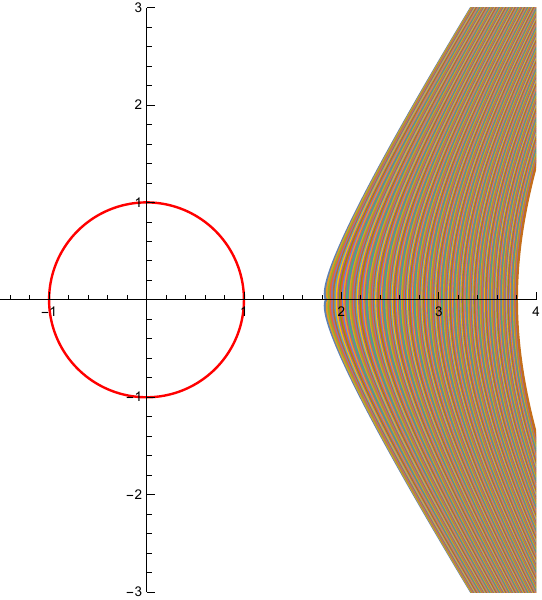}
         \caption{System \eqref{eq3.2} is stable for all $\tau\geq0$}
         \label{stable for all tau when b and c is negative}
     \end{subfigure}
     \hfill
     \begin{subfigure}[b]{0.37\textwidth}
         \centering
         \includegraphics[width=\textwidth]{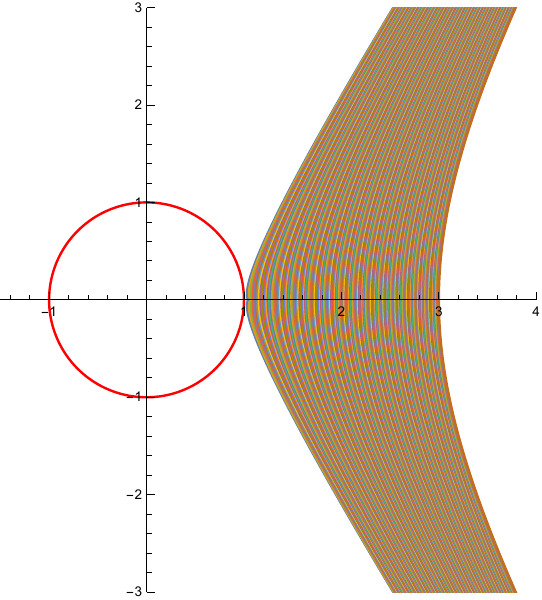}
         \caption{System \eqref{eq3.2} is stable for all $\tau\geq0$ upto the tangent equation}
       %  \label{fig:three sin x}
     \end{subfigure}
  \caption{ Different behavior of $P(\lambda)$ when $\lambda$ is a complex root and $b$ and $c$ are of same sign}
       \label{complex root when b negative c negative}
\end{figure} 
\subsubsection{We consider $b>0$ and $1/2<\alpha<1$.}\label{section for instability}
\begin{itemize}
\item [\textbf{Case 1:}] $c>0$
\end{itemize}
For $b>0$ and $c>0$ the region of $\mathcal{P}$ will be on the right side of $\partial\mathcal{P}$ as ${\lim_{u\to\infty}}Re(P(u+iv))\to \infty$ and if we fix $b$, $\alpha$, $c$ and change the parameter $a$, the region of $\mathcal{P}$ gets translated from right side to the left side in the $a_1c$-plane (cf. Figure (\ref{complex root when b negative c negative})). So, here also we have to find the tangent equation as given in section \eqref{section to find the common tangent}. So, note that for $c>0$, we get 
\begin{equation}\label{expression for v when c is positive}
v^\alpha=\dfrac{\csc(\alpha \pi)\Big(-\sin(\frac{\alpha\pi}{2})+\sqrt{\sin^2(\frac{\alpha\pi}{2})+4 b c y\sin(\alpha\pi)}\Big)}{2 c}.
\end{equation}
 By putting $v^\alpha$ in equation \eqref{eq3.5.2.2}, we get the expression for $\partial\mathcal{P}$
 \begin{equation}\label{curveequation1}
 \begin{split}
\dfrac{1+4 a c +4 b c x-\csc(\frac{\alpha\pi}{2})\sqrt{\sin^2(\frac{\alpha\pi}{2})+4 b c y \sin(\alpha\pi)}}{4c}-\\
 \dfrac{\cot(\alpha\pi)\csc(\alpha\pi)\big(\sin(\frac{\alpha\pi}{2})+\sqrt{\sin^2(\frac{\alpha\pi}{2})+4 b c y \sin(\alpha\pi)}\big)^2}{4 c}=0.
\end{split} 
 \end{equation}
  If we equate slope of circle \eqref{circleequation} and slope of $\partial\mathcal{P}$ \eqref{curveequation1}, we get the expression for common tangent as 
 \begin{equation}\label{tangenteq1}
\frac{x}{y}+\frac{\csc(\frac{\alpha\pi}{2})\sqrt{\sin^2(\frac{\alpha\pi}{2})+4 b c y \sin(\alpha\pi)}}{\cot(\frac{\alpha\pi}{2})+\cot(\alpha\pi)\Big(-1+\csc(\frac{\alpha\pi}{2})\sqrt{\sin^2(\frac{\alpha\pi}{2})+4 b c y \sin(\alpha\pi)}\Big)}=0.
 \end{equation}
 Using equations \eqref{curveequation1}, \eqref{tangenteq1} and \eqref{circleequation}, we can eliminate $x$ and $y$ as given in the section \eqref{section to find the common tangent}. We get the bifurcation curve $\Gamma_7$ given in data set 3 in the $a_1c$-plane. On the left side of $\Gamma_7$, we don't have any intersection between $\mathcal{P}$ and $\widehat{Q}$ (as shown in Figure (\ref{stable for all tau when b and c is negative})) and system \eqref{eq3.2} is stable $\forall \tau\geq0$. Note that, the bifurcation curve $\Gamma_7$ intersects the $c-$axis in the $a_1c$ plane at some value $c=c_{2}$. The expression for $c_{2}$ in terms of $b>0$ and $1/2<\alpha<1$ is given in the data set 4 which is given in this article.\\
 The curve $\Gamma_7$ provides a bifurcation in second quadrant of $a_1c$-plane in this case.\\
 Note that, Theorem \eqref{drcthm3.1}(b) shows that the first quadrant of $a_1c$-plane in this case is unstable region $\forall\tau\geq0$. 
 \begin{itemize}
 \item[\textbf{Case 2:}] $c<0$
 \end{itemize}
 Note that for $c<0$, the region of $\mathcal{P}$ is on the left side of the boundary $\partial\mathcal{P}$ as in Figure (\ref{stable for all tau}). For $c<0$, the value of $v^\alpha$ is given by the equation \eqref{value of v for c<0} and the common tangent equation is given by \eqref{tangenteq}. As in Section \eqref{section to find the common tangent}, we eliminate $x$ and $y$ from \eqref{value of v for c<0}, \eqref{tangenteq} and \eqref{circleequation} to get the bifurcation curve. Let us call the equation of the bifurcation curve for $b>0$, $c<0$ and $1/2<\alpha<1$ as $\Gamma_8$ (see data set 5). On the right side of $\Gamma_8$, we don't have any intersection between $\mathcal{P}$ and $\widehat{Q}$ and the system \eqref{eq3.2} is stable for all $\tau\geq0$.\\
 By Theorem \eqref{drcthm3.1}(c) in Section \eqref{section of positive real root} the $3rd $ quadrant of $a_1c$-plane is unstable $\forall$ $\tau\geq0$, for this case.
\subsubsection{Consider $b<0$ and $1/2<\alpha<1$.}
 
\begin{itemize}
\item [\textbf{Case 1:}] $c>0$.
\end{itemize} 
 If $b<0$, $c>0$ and $1/2<\alpha<1$, then $\mathcal{P}$ is bounded from right and unbounded from left as ${\lim_{u\to\infty}}Re(P(u+iv))\to-\infty$ (cf. Figure \eqref{p(iv) bounded from right and unbounded from left}). If we fixed $b$, $\alpha$, $c$ and increase $a$ the region of $\mathcal{P}$ goes away from $\widehat{Q}$ (cf. Figure \eqref{stable for all tau}). If we decrease $a$ then we observed that the boundary $\partial\mathcal{P}$ and boundary $\widehat{Q}$ intersect as shown in Figure \eqref{fig when a1<2b<0 and c>0} for a fix $b$, $c$ and $\alpha$. It means both $\partial \mathcal{P}$ and $\partial\widehat{Q}$ have a common tangent. For this, we have to find the expression for $\partial\mathcal{P}$. So, for $c>0$ we have $v^\alpha$ is equation \eqref{expression for v when c is positive} putting this $v^\alpha$ in equation \eqref{eq3.5.2.2} we get $\partial\mathcal{P}$ as equation \eqref{curveequation1}. Therefore, the expression for common tangent is obtained by equation \eqref{tangenteq1}. So, as in Section \eqref{section to find the common tangent} we find a bifurcation curve $\Gamma_9$ (see data set 6) such that on the left side of $\Gamma_9$ we don't have any intersection between $\mathcal{P}$ and $\widehat{Q}$ and system \eqref{eq3.2} is stable $\forall$ $\tau\geq0$. Note that this bifurcation curve $\Gamma_9$ intersect with line $a_1=2b$ at $c_7$. The expression for $c_7$ is given in data set 7 attached with this manuscript.\\
 
  \begin{figure}
     \centering
     \begin{subfigure}[b]{0.38\textwidth}
         \centering
         \includegraphics[width=\textwidth]{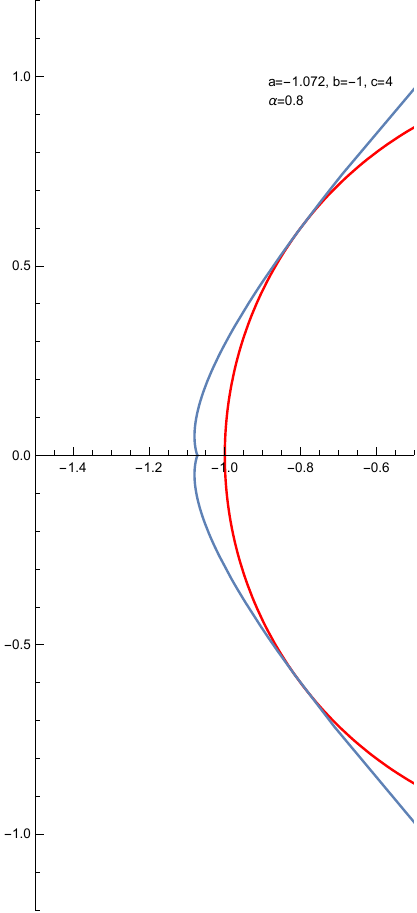}
         \caption{Bifurcation curve when $a_1<2b<0$}
        \label{fig when a1<2b<0 and c>0}
     \end{subfigure}
     \hfill
     \begin{subfigure}[b]{0.49\textwidth}
         \centering
         \includegraphics[width=\textwidth]{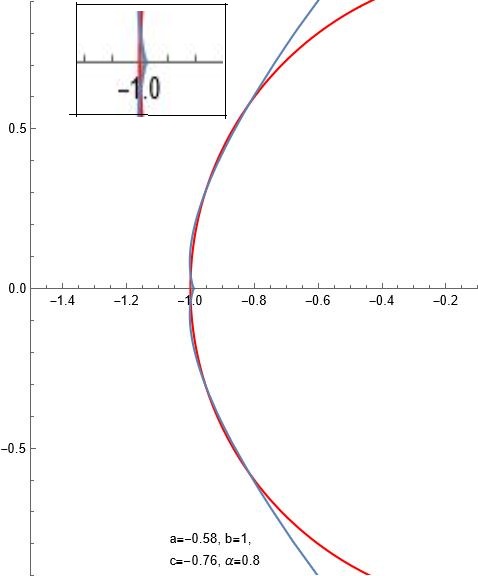}
         \caption{Bifurcation curve when $2b<a_1<0$}
         \label{fig when 2b<a1<0 and c>0}
     \end{subfigure}
 % \caption{ Different behavior of $P(\lambda)$ when $\lambda$ is a complex root and $b$ and $c$ are of same sign}
     %  \label{complex root when b negative c negative}
\end{figure}

\begin{itemize}
\item [\textbf{Case 2:}] $c<0$.\\
For $c<0$ and $b<0$ the region of $\mathcal{P}$ is unbounded from right and bounded from left (given as Figure \eqref{complex root when b negative c negative}). So, here also we have to find the condition such that the boundary $\partial\mathcal{P}$ touches to boundary of $\widehat{Q}$ i.e. unit circle. For that we need to equate the tangent of $\partial\mathcal{P}$ and $\partial\widehat{Q}$. Therefore, for $c<0$ the value of $v^\alpha$ is given by the equation \eqref{value of v for c<0}. The equation of $\partial\mathcal{P}$ and equation of common tangent are \eqref{curveequation} and \eqref{tangenteq} respectively. So, we obtain the bifurcation curve $\Gamma_{10}$ (see data set 8) as given in Section \eqref{section to find the common tangent} such that on the right side of $\Gamma_{10}$ we don't have any common points between $\mathcal{P}$ and $\widehat{Q}$ and system \eqref{eq3.2} is stable for all $\tau\geq 0$.
\end{itemize} 
Note that the first and third quadrant in the $a_1c$-plane is unstable for all $\tau\geq0$ by Theorems \eqref{drcthm3.1}(a) and \eqref{drcthm3.1}(c) given in Section \eqref{section of positive real root}.    
\section{Few more unstable regions} \label{sec3.7}
Based on our observations for sufficient number of parameter values, we conjecture that the following regions are unstable: \\
\begin{itemize}
\item The regions bounded by the curve $\Gamma_1$ and $\Gamma_2$ in the Figure \eqref{mainfig4.1} and \eqref{mainfig4.2}. 
\end{itemize}
It is open problem to prove these conjectures or to provide counter examples. If any of these regions is
not unstable then we can expect the instability switches.
\section{Delay dependent stability/ instability:}\label{section3.8}
\subsection{The regions SS and SSR}
We observed following types of bifurcations which depend on delay.\\
\textbf{Single stable region (SSR):} 
In this case, $\exists$ $\tau_*>0$ such that $0<\tau<\tau_*$ $\Longrightarrow$ the system is stable and $\tau>\tau_*$ $\Longrightarrow$ the unstable behavior (cf. Figure \eqref{single stable region for k=1}).\\
\textbf{Stability Switch (SS):} The stability switches are observed if $\exists$ $\tau_{0*}=0$ and positive constants $\tau_{1*}$, $\tau_{2*}$, $\tau_{3*},\ldots\tau_{k*}$ such that \\
$\tau_{2jk*}<\tau<\tau_{(2j+1)*}$  $\Longrightarrow$ stable, $j=0,1,\ldots,\frac{k-1}{2}$,\\
$\tau_{(2j+1)*}<\tau<\tau_{(2j+2)*}$  $\Longrightarrow$ unstable, $j=0,1,\ldots,\frac{k-3}{2}$
and \\
 $\tau>\tau_k$ $\Longrightarrow$ unstable.\\
Note that, $k$ is an odd number $\geq3$ e.g. if $k=5$, then we get the stability properties as shown in Figure \eqref{stability switches for k=5} for the delay value $\tau$.\\
\begin{figure}
    \centering
    \includegraphics{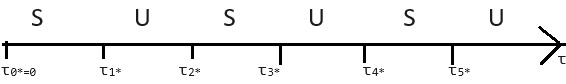}
    \caption{Stability switches for $k=5$ }
    \label{stability switches for k=5}
\end{figure}
If we allow $k=1$ then this reduces to the case SSR and the properties will be as in the Figure \eqref{single stable region for k=1}:\\
\begin{figure}
    \centering
    \includegraphics[scale=0.7]{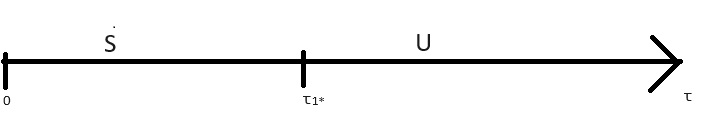}
    \caption{Single stable region for $k=1$ }
    \label{single stable region for k=1}
\end{figure}
We provide more details and the conditions for such switches in further sections.\\
In the previous Sections, we provided some delay independent stable/unstable regions. These are some regions in Figure(\eqref{mainfig4.1}-\eqref{mainfig4.4}) which were not discussed. Now, we show that the stability properties in these regions depend on the delay. The regions\\
\begin{itemize}
\item[(i)] bounded by $\Gamma_2$ and $\Gamma_5$ for $b>0$, $c<0$, $0<\alpha<1/2$ in Figure \eqref{mainfig4.1}
\item[(ii)] bounded by $a_1=2b$ and $2b<a_1<0$, for $c>0$, $b<0$, $0<\alpha<1/2$ in Figure \eqref{mainfig4.2}
\item[(iii)]bounded by $\Gamma_6$ and $\Gamma_2$ for $c<0$, $b<0$, $0<\alpha<1/2$ in Figure \eqref{mainfig4.3}
\item[(iv)] bounded by $\Gamma_7$ and the line $a_1=0$ for $b>0$, $c>0$, $1/2<\alpha<1$ in Figure \eqref{mainfig4.3}.
\item[(v)] bounded by $\Gamma_2$ and $\Gamma_8$ for $b>0$, $c<0$ and $1/2<\alpha<1$ in Figure \eqref{mainfig4.3}
\item[(vi)] bounded by $\Gamma_9$ and the line $a_1=0$ for $c>0$, $b<0$ and $1/2<\alpha<1$ in Figure \eqref{mainfig4.4}
\item[(vii)] bounded by $\Gamma_2$ and $\Gamma_{10}$ for $c<0$, $b<0$ and $1/2<\alpha<1$ in Figure \eqref{mainfig4.4} 
are subsets of the stable region at $\tau=0$ given in Figure \eqref{fig1}. \\
The system \eqref{eq3.2} is stable region at $\tau=0$ in all the above regions.\\

We observed that $\exists$ a real root of $v^\alpha$ of equation \eqref{value of valpha} in all these regions. Since, the system is stable at $\tau=0$, $\nexists$ any characteristic root in the right half complex plane. As $exists$ v, we can have critical values of $\tau$ defined by \eqref{critical value of tau with plus sign} and \eqref{critical value of tau with minus sign}. If $Re\Big(\frac{d\lambda}{d\tau}\Big|_{u=0}\Big)<0$ at the smallest critical values $\tau_*$ then we must have some characteristic root which is moving from right half plane to left half plane at $\tau_*$. Since, $\nexists$ any characteristic root in the right half this cannot be true. Therefore, we must have $Re\Big(\frac{d\lambda}{d\tau}\Big|_{u=0}\Big)>0$ at $\tau_*$.\\
This shows that $exists$ some $\tau_0>\tau_*$ such that system \eqref{eq3.2} becomes unstable at $\tau_0$.
\end{itemize}

\textbf{Instability switches (IS):}
In this case, we will have $\tau_{0*}=0$ and positive constants $\tau_{1*}$, $\tau_{2*},	\ldots,\tau_{k*}$ such that \\
$\tau_{2j*}<\tau<\tau_{(2j+1)*}\Longrightarrow $ Unstable, $j=0,1,\ldots,(k-2)/2$,\\
$\tau_{(2j+1)*}<\tau<\tau_{(2j+2)*}\Longrightarrow $ stable, $j=0,1,\ldots,(k-2)/2$\\
and $\tau>\tau_k\Longrightarrow$ Unstable. \\
E.g. if $k=6$ then the stability properties are as in Figure \eqref{instable region for k=6}.\\
\begin{figure}
    \centering
    \includegraphics[width=1.2\textwidth]{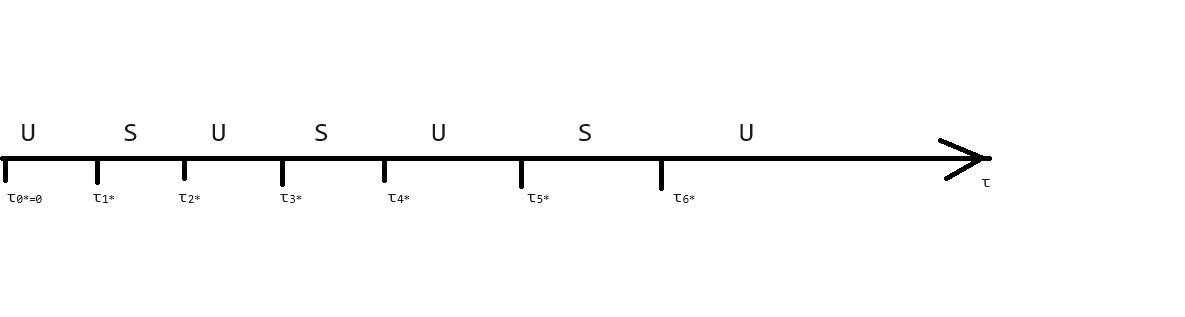}
    \caption{Instability switches for $k=6$ }
    \label{instable region for k=6}
\end{figure}
In both the cases SS and IS, the last unbounded region for $\tau$ gives instability. As we go arbitrary close to some bifurcation values, the number of switches increase.\\
 We consider four cases viz. $b>0$ and $0<\alpha<1/2$, $b<0$ and $0<\alpha<1/2$, $b>0$ and $1/2<\alpha<1$ and $b<0$ and $1/2<\alpha<1$. In each cases, we give the expressions for the bifurcation curves that separate SSR, SS, IS and the unstable region.
 
\subsection{We consider $b>0$ and $0<\alpha<1/2$.}\label{for b positive and alpha between 0 to half}

In the Sections \eqref{sec3.6} and \eqref{sec3.5}, we provided the stability analysis of all the points in the $a_1c$-plane except those in the region bounded by the curves $\Gamma_2$ and $\Gamma_5$ in the fourth quadrant. Solving equation \eqref{value of valpha} under these conditions on parameters $b$ and $\alpha$, we get two roots viz. $v_1$ and $v_2$ with $0<v_1<v_2$(see data set 9). These values will produce the following critical values of delay viz. $\tau_-(n)(v_1)$ and $\tau_+(n)(v_2)$ (see \eqref{critical value of tau with plus sign} and \eqref{critical value of tau with minus sign} for the expressions).

We observed that, $Re\big(\dfrac{d\lambda}{d\tau}|_{u=0}\big)$ is negative at $v_1$ and positive at $v_2$ for these values of parameters. Therefore, the characteristic roots move from left to right at $\tau=\tau_{+}(n)$ and from right to left at $\tau=\tau_{-}(n)$. Since, the system \eqref{eq3.2} is stable at $\tau=0$ in the fourth quadrant of $a_1c$-plane and $\tau_{+}(0)$ is smallest critical value, the system becomes unstable at $\tau=\tau_{+}(0)$.\\
Therefore, $0<\tau<\tau_{+}(0)\Longrightarrow$ stability.\\
This can be the only stable region (SSR case) or we can have stability switches (SS). We provide the bifurcation analysis below. \\
Let $\delta_1=\tau_{-}(n+1)-\tau_{-}(n)=\frac{2\pi}{v_1}$,\\
$\delta_2=\tau_{+}(n+1)-\tau_{+}(n)=\frac{2\pi}{v_2}$\\
and $\upmu=\tau_{-}(1)-\tau_{+}(0)$.\\
On the right side of the curve $\Gamma_2$ system \eqref{eq3.2} is stable at $\tau=0$ by the Section \eqref{sec3.1}. Note that $\tau_{+}(0)$ is the closest critical value to $0$ where $Re\left(\dfrac{d\lambda}{d\tau}|_{\tau_{+}(0)}\right)>0$.\\
If $\upmu<\delta_2$ then, $0<\tau_{+}(0)<\tau_{-}(1)<\tau_{+}(1)<\ldots<\tau_{+}(k)$ for some $k>0$ and we have stability switches (SS) upto $\tau_{+}(k)$. So, the switches will get disappear if $\exists$ smallest $k$ such that $\upmu_k=\tau_-(k+1)-\tau_+(k)>\delta_2$. Moreover, system become unstable for all $\tau>\tau_+(k)$.\\
If $\upmu>\delta_2$ then, $0<\tau_{+}(0)<\tau_{+}(1)<\tau_{-}(1)\ldots$ and we have only single stable region (SSR) $[0,\tau_{+}(0))$.
Therefore, the equation $\upmu=\delta_2$ is the bifurcation curve $\Gamma_{11}$ in the $a_1c$-plane. On the right side of $\Gamma_{11}$ we have SS and on the left we have SSR.\\ 
 For $b>0$ and $0<\alpha<1/2$ the expression of curve $\Gamma_{11}$ is attached with this manuscript in the data set 10.\\
Using all this analysis, we are able to provide the stability region of system \eqref{eq3.2} in the $a_1c$-plane for $b>0$ and $0<\alpha<1/2$ (cf. Figure \eqref{mainfig4.1}).\\

\subsection{We consider $b<0$ and $0<\alpha<1/2$.}
For $b<0$ and $0<\alpha<1/2$, we provided the stability results in Section \eqref{positive complex root for b negative and alpha between 0 to half} and Theorem \eqref{drcthm3.1}(d) (Section \eqref{section of positive real root}) except the region bounded by the curve $\Gamma_0$, vertical axis with $c<0$ and $\Gamma_7$ in $a_1c$ plane. We now propose the stability properties of system \eqref{eq3.2} in this remaining part. If we solve equation \eqref{value of valpha} with $b<0$ and $0<\alpha<1/2$, we get two values of $v^\alpha$ viz. $v_1$ and $v_2$ with $0<v_1<v_2$ (see data set 9). The critical values of $\tau$ corresponding to $v_1$ and $v_2$ are $\tau_-(n)(v_1)$ and $\tau_-(n)(v_2)$. \\
We proceed as in Section \eqref{for b positive and alpha between 0 to half}. If $\delta_1=\tau_{-}(n+1)(v_1)-\tau_{-}(n)(v_1)=\frac{2\pi}{v_1}$,\\
$\delta_2=\tau_{-}(n+1)(v_2)-\tau_{-}(n)(v_2)=\frac{2\pi}{v_2}$\\
and $\upmu=\tau_{-}(1)(v_2)-\tau_{-}(1)(v_1)$\\
then $\upmu=\delta_2$ is the bifurcation curve in the $a_1c$-plane if system \eqref{eq3.2} is stable at $\tau=0$ i.e. the right side of the curve $\Gamma_2$. Let us call the bifurcation curve $\upmu=\delta_2$ as $\Gamma_{12}$ whose expression in the $a_1c$-plane for $b<0$ and $0<\alpha<1/2$ is given in data-set 11. Moreover, the curve $\upmu=\delta_2$ intersects the curve $\Gamma_2$ at $c_0$ (Figure \eqref{mainfig4.2}). The expression for $c_0$ is given in data set 12. On the right side of the curve $\Gamma_{12}$, $\upmu<\delta_2$. So, we have $0<\tau_{-}(1)(v_2)<\tau_{-}(1)(v_1)<\ldots\tau_{-}(k)(v_2)$ and we have SS upto $\tau_{-}(k)(v_2)$. Note that we expect SS only upto $\tau_{-}(k)(v_2)$ where $k$ is the smallest positive number such that $\upmu_k=\tau_{-}(k)(v_1)-\tau_{-}(k)(v_2)>\delta_2$. System is unstable for all $\tau>\tau_{-}(k)(v_2)$. On the left side of the curve $\upmu=\delta_2$ we have $0<\tau_{-}(1)(v_2)<\tau_{-}(2)(v_2)<\ldots$ and it will give SSR from $[0,\tau_{-}(1)(v_2))$.\\ 
\subsubsection{Condition for the Instability switch (IS):} 
Note that region bounded by $\Gamma_0$ and $\Gamma_2$ is still remaining for $b<0$ and $0<\alpha<1/2$. By the Section \eqref{sec3.1} and Figure \eqref{fig1}, the region on the left side of the curve $\Gamma_2$ is unstable at $\tau=0$. If we solve equation \eqref{value of valpha} with $b<0$ and $0<\alpha<1/2$ we get $v_1$ and $v_2$ as two roots of $v^\alpha$ with $0<v_1<v_2$. In this region, the critical values of delay are $\tau_+(n)(v_1)$ and $\tau_-(n)(v_2)$.\\

We notice that $Re\big(\dfrac{d\lambda}{d\tau}|_{u=0}\big)$ is negative at $v_1$ and positive at $v_2$. It means that the roots of characteristic equation shift from right half plane to left half plane at $\tau=\tau_+$ and from left to right at $\tau=\tau_-$. Since, the system \eqref{eq3.2} is unstable at $\tau=0$ and if $\tau_-(0)$ is the smallest critical value then multiplicity of positive root in the right half plane is increased by two. On the other hand if $\tau_+(0)$ is the smallest critical value then the multiplicity of positive root in the right half plane is decreased by two. The former will provide IS and the later will give unstable solutions $\forall\tau>0$. Therefore, the curve bifurcating the unstable region with the IS region is provided by $\tau_{-}(0)=\tau_{+}(0)$ i.e.
\begin{equation*}
\frac{\arccos(\frac{v_1\cos(\frac{\alpha\pi}{2})+c v_1\cos(\alpha\pi)-a}{b})}{v_1^{1/\alpha}}-
\frac{2\pi-\arccos(\frac{v_2\cos(\frac{\alpha\pi}{2})+c v_2\cos(\alpha\pi)-a}{b})}{v_2^{1/\alpha}}=0.
\end{equation*}
Let us call this curve as $\Gamma_{13}$ (see data set 13). On the right side of $\Gamma_{13}$ we have $\tau_{+}(0)<\tau_{-}(1)$. So, the smallest critical delay is $\tau_+(0)$. By the Section \eqref{sec3.1}, we have only two roots of the characteristic equation at $\tau=0$ in the right half plane and those two roots shift from right half plane to left half plane as we increase $\tau$ from $\tau_+(0)$. Since, we don't have any root on the right half plane, the system becomes stable. As we further increase $\tau>\tau_-(1)$ the characteristic roots are shifted from left half plane to the right half plane and the system will become unstable. So, we have instability switches if $\tau_+(0)$ is the smallest critical value.\\
Let $\delta_1=\tau_{+}(n+1)-\tau_{+}(n)=\frac{2\pi}{v_1}$,\\
$\delta_2=\tau_{-}(n+1)-\tau_{-}(n)=\frac{2\pi}{v_2}$\\
and $\upmu=\tau_{-}(1)-\tau_{+}(0)$. 
\begin{itemize}
\item[(a)] If $\tau_+(0)$ is the smallest critical value and $\upmu<\delta_1$ then we have instability switches.
\item[(b)] If, in addition, $\upmu>\delta_1-\delta_2$ i.e. $\upmu<\delta_1<\upmu+\delta_2$ then we have switch of the form $U-S-U-S-U$.
Otherwise (i.e. if $\upmu<\delta_1-\delta_2$) we have switch of the form $U-S-U$.
\end{itemize} 
So, we have instability switch upto smallest $k$ such that $0<\tau_+(0)<\tau_-(1)<\tau_+(1)\ldots\tau_-(k)$ where $\upmu_k=\tau_{-}(k+1)-\tau_+(k)>\delta_1$.\\
On the left side of $\Gamma_{13}$, $\tau_{-}(1)> \tau_{+}(0)$ so the multiplicity of positive root on the right side increase by two and system becomes unstable $\forall \tau\geq0$.\\
Using this analysis, we are able to sketch the stability region in the complete $a_1c$-plane for $b<0$ and $0<\alpha<1/2$ (cf. Figure \eqref{mainfig4.2}).
\subsection{ Consider $b>0$ and $1/2<\alpha<1$.}
\begin{itemize}
\item [\textbf{Case 1:}] $c>0$.\\
If we solve equation \eqref{value of valpha}, we get two positive values of $v^\alpha$ namely $v_1$ and $v_2$ with $0<v_1<v_2$ as in previous sections. Corresponding to those $v_1$ and $v_2$ the critical values of delay $\tau$ are $ \tau_-(n)(v_1)$ and $\tau_-(n)(v_2)$.\\
We observed that, $Re\big(\dfrac{d\lambda}{d\tau}|_{u=0}\big)$ is negative at $v_1$ and positive at $v_2$ for these values of parameters.
Now, we define\\
 $\delta_1=\tau_{-}(n+1)(v_1)-\tau_{-}(n)(v_1)=\frac{2\pi}{v_1}$,\\
$\delta_2=\tau_{-}(n+1)(v_2)-\tau_{-}(n)(v_2)=\frac{2\pi}{v_2}$\\
and $\upmu=\tau_{-}(1)(v_2)-\tau_{-}(1)(v_1)$.\\
Note that $\upmu=\delta_2$ is the bifurcation curve which we call as $\Gamma_{14}$ (see data set 14). On the left side of $\Gamma_{14}$, we have $\upmu<\delta_2$ and $0<\tau_{-}(1)(v_2)<\tau_{-}(1)(v_1)<\ldots\tau_{-}(k)(v_2)$ which gives SS upto $\tau_{-}(k)(v_2)$. Moreover, $k$ is the smallest positive integer where $\upmu_k=\tau_-{(k)}(v_2)-\tau_-{(k)}(v_1)>\delta_2$ and if $\tau>\tau_{-}(k)$ then system becomes unstable. On the right of $\Gamma_{14}$, we have $\upmu>\delta_2$ so we get only SSR from $[0,\tau_{-}(1)(v_2))$ as the critical values occur as $0<\tau_{-}(1)(v_2)<\tau_{-}(2)(v_2)<\ldots$. Note that this bifurcation curve $\Gamma_{14}$ intersects the vertical axis at $c_1$. The expression for $c_1$ is given in data set 15 of this manuscript.
\end{itemize}
\begin{itemize}
\item [\textbf{Case 2:}] $c<0$.\\ The critical values of delay $\tau$ with respect to $v_1$ and $v_2$ are $ \tau_-(n)$ and $\tau_+(n)$ respectively. 
Note that $Re\big(\dfrac{d\lambda}{d\tau}|_{u=0}\big)$ is negative at $v_1$ and positive at $v_2$ for these values of parameters. We define,\\
 $\delta_1=\tau_{-}(n+1)-\tau_{-}(n)=\frac{2\pi}{v_1}$,\\
$\delta_2=\tau_{+}(n+1)-\tau_{+}(n)=\frac{2\pi}{v_2}$\\
and $\upmu=\tau_{+}(0)-\tau_{-}(1)$.\\
The bifurcation curve $\Gamma_{15}$ is obtained by solving $\upmu=\delta_2$ (see dataset-16). Note that the curve $\Gamma_{15}$ meets the curve $\Gamma_2$ at $c_{3}$. The expression for $c_{3}$ is also given in data set 17. On the right side of $\Gamma_{15}$ we have SS and on the left side we have SSR.
\end{itemize}
\subsection{ Consider $b<0$ and $1/2<\alpha<1$.}
\begin{itemize}
\item [\textbf{Case 1:}] $c>0$.\\
If we solve equation \eqref{value of valpha} we get two values of $v^\alpha$ as $v_1$ and $v_2$ with $0<v_1<v_2$. The critical values of $\tau$ with respect to $v_1$ are $\tau_+(n)(v_1)$ and $\tau_+(n)(v_2)$. 
Here we observed that $Re\Big(\dfrac{d\lambda}{d\tau}|_{u=0}\Big)$ is negative at $v_1$ and positive at $v_2$. We set  
 $\delta_1=\tau_{+}(n+1)(v_1)-\tau_{+}(n)(v_1)=\frac{2\pi}{v_1}$,\\
$\delta_2=\tau_{+}(n+1)(v_2)-\tau_{+}(n)(v_2)=\frac{2\pi}{v_2}$\\
and $\upmu=\tau_{+}(0)(v_2)-\tau_{+}(0)(v_2)$.\\ 
When $\upmu<\delta_2$, we have $0<\tau_+(0)(v_2)<\tau_+(0)(v_1)<\tau_+(1)(v_2)\ldots\tau_+(k)(v_2)$ and we have SS upto $\tau_+(k)(v_2)$ where $k$ is the smallest positive integer such that $\upmu_k=\tau_+(k)(v_2)-\tau_+(k)(v_1)>\delta_2$. Moreover, when $\upmu>\delta_2$, we get $0<\tau_+(0)(v_2)<\tau_+(1)(v_2)<\tau_+(0)(v_1)\ldots$ and we get SSR $\tau\in[0,\tau_+(0)(v_2))$. So, $\upmu=\delta_2$ is the bifurcation curve $\Gamma_{16}$ for $b<0$ and $1/2<\alpha<1$ (see Figure \eqref{mainfig4.4}). See the dataset-18 for the expression of $\Gamma_{16}$. Note that the curve $\Gamma_{16}$ and the tangent curve $\Gamma_9$ will meet at $(a_{5},c_{5})$. For $b=-1$ and $\alpha=0.8$ we get $(a_{5},c_{5})=(-1.97444,2.00571)$. 
\end{itemize}
\begin{itemize}
\item [\textbf{Case 2:}] $c<0$.\\
Let us consider the region bounded by the curve $\Gamma_2$ and the tangent curve $\Gamma_{10}$. In this region, we have two values $v_1$ and $v_2$ with $0<v_1<v_2$ if we solve equation \eqref{value of valpha}. Corresponding to $v_1$, we have critical values of delay $\tau$ as $\tau_-(n)(v_1)$ 
and with respect to $v_2$ the critical value of $\tau$ as $\tau_-(n)(v_2)$. 
We define,\\
 $\delta_1=\tau_{-}(n+1)(v_1)-\tau_{-}(n)(v_1)=\frac{2\pi}{v_1}$,\\
$\delta_2=\tau_{-}(n+1)(v_2)-\tau_{-}(n)(v_2)=\frac{2\pi}{v_2}$\\
and $\upmu=\tau_{-}(1)(v_2)-\tau_{-}(1)(v_1)$.\\
We have $\upmu=\delta_2$ as the bifurcation curve $\Gamma_{17}$ given in data-set 19 (see Figure \eqref{mainfig4.4}). On the right side of $\Gamma_{17}$ the system is stable at $\tau=0$ and the smallest critical value of $\tau$ is $\tau_{-}(1)(v_2)$. So, we have $0<\tau_{-}(1)(v_2)<\tau_-(1)(v_1)<\tau_-(2)(v_2)\ldots \tau_-(k)(v_2)$ and we get stability switches upto $\tau_-(k)(v_2)$. On the left side of $\Gamma_{17}$, we have SSR $[0,\tau_-(1)(v_2))$. Note that the curve $\Gamma_{17}$ meets with the curve $\Gamma_{2}$ at $c_{6}$.
 Note that some region which is on the left side of the curve $\Gamma_2$ is yet to be done (Figure \eqref{mainfig4.4}). On the left side of the curve $\Gamma_2$, system \eqref{eq3.2} is unstable at $\tau=0$. If we solve equation \eqref{value of valpha} in the region bounded by $\Gamma_2$, vertical axis with $c<0$ and the curve $\Gamma_0$ we get two values of $v$ say $v_1$ and $v_2$ with $0<v_1<v_2$ given in the data set 9. These $v_1$ and $v_2$ give the critical values of delay $\tau$ as  $\tau_+(n)(v_1)$ and $ \tau_+(n)(v_2)$.\\
We notice that  $Re\Big(\dfrac{d\lambda}{d\tau}|_{u=0}\Big)$ is negative at $v_1$ and positive at $v_2$. So, if $\tau_+(0)(v_1)<\tau_+(0)(v_2)$ then two roots are going from right half plane to the left half plane at $\tau+(0)(v_1)$ and no roots are on the right side. On the other hand, the roots are moving from the left to the right half plane at $\tau_+(0)(v_2)$ and the system becomes unstable.
 So, system \eqref{eq3.2} is stable for $\tau_+(0)(v_1)<\tau<\tau_+(0)(v_2)$. If $\tau_+(0)(v_1)>\tau_+(0)(v_2)$ it means that the system \eqref{eq3.2} unstable at $\tau=0$ and some more roots of characteristic equation \eqref{eq3.3} goes from left half plane to the right half plane. So, the system becomes unstable for $\forall\tau\geq0$. So, the bifurcation curve $\Gamma_{18}$ given in data set 20 that discriminate IS from unstable $\forall\tau$ is \\
\begin{equation*}
\tau_{+}(0)(v_1)=\tau_{+}(0)(v_2).
\end{equation*}
In the region bounded by $\Gamma_2$ and $\Gamma_{18}$ we have $\tau_{+}(0)(v_1)<\tau_{+}(0)(v_2)$. So, $0<\tau_+(0)(v_1)<\tau_+(0)(v_2)<\tau_+(1)(v_1)\ldots\tau_+(k)(v_2)$ and we have IS upto smallest positive $k$ such that $\upmu_k=\tau_+(k)(v_2)-\tau_+(k)(v_1)>\delta_1$ where $\delta_1=\tau_+(n+1)(v_1)-\tau_+(n)(v_1)=\frac{2 \pi}{v_1}$. On the left side of $\Gamma_{18}$, we have $\frac{\arccos(\frac{v_1\cos(\frac{\alpha\pi}{2})+c v_1\cos(\alpha\pi)-a}{b})}{v_1^{1/\alpha}}>\frac{\arccos(\frac{v_2\cos(\frac{\alpha\pi}{2})+c v_2\cos(\alpha\pi)-a}{b})}{v_2^{1/\alpha}}$ and system \eqref{eq3.2} is unstable for all $\tau\geq0$ (See Figure \eqref{mainfig4.4}). \\
\end{itemize}
Using all this analysis we provide the stability region for all the cases viz. $b>0$, $0<\alpha<1/2$, $b<0$, $0<\alpha<1/2$, $b>0$, $1/2<\alpha<1$ and $b<0$, $1/2<\alpha<1$.
\begin{figure}
    \centering
    \includegraphics[scale=1.]{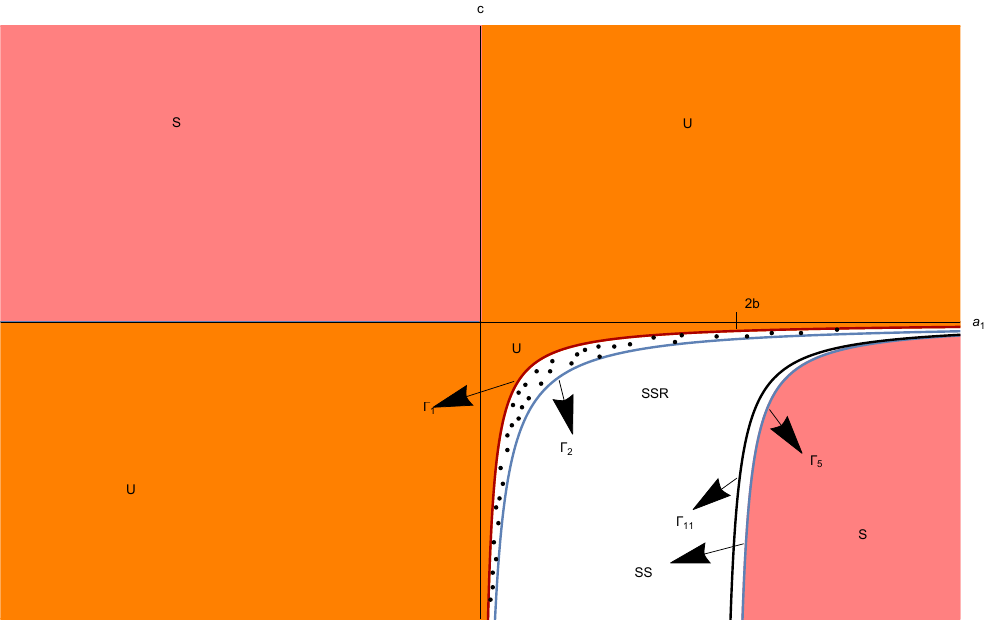}
    \caption{Graph for $b$ positive and $0<\alpha<1/2$ in complete $a_1c-$plane }\label{mainfig4.1}
\end{figure}
\begin{figure}
    \centering
    \includegraphics[scale=1.5]{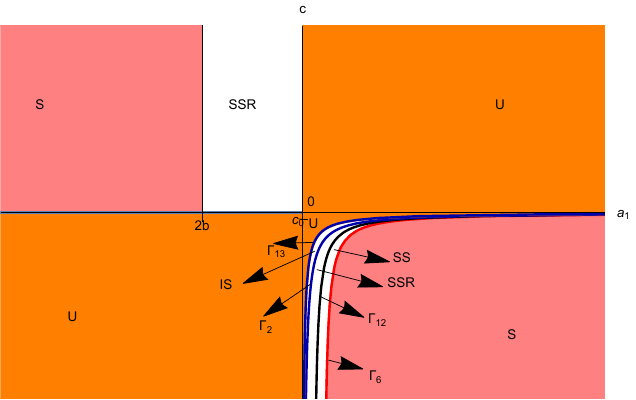}
    \caption{Graph for $b$ negative and $0<\alpha<1/2$ in complete $a_1c-$plane }\label{mainfig4.2}
\end{figure}
\begin{figure}
    \centering
    \includegraphics[scale=1.5]{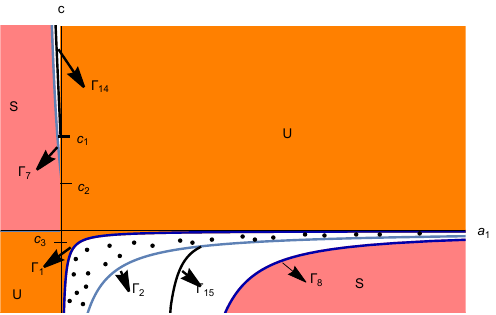}
    \caption{Graph for $b$ positive and $1/2<\alpha<1$ in complete $a_1c-$plane }\label{mainfig4.3}
\end{figure}
\begin{figure}
    \centering
    \includegraphics[scale=1.35]{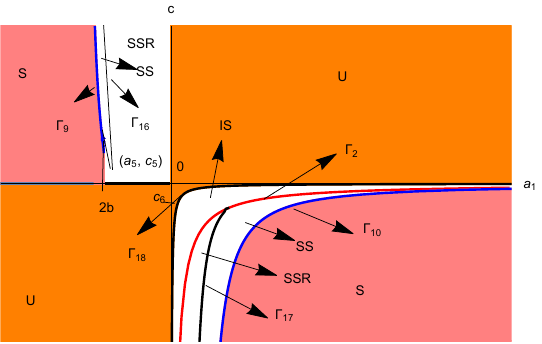}
    \caption{Graph for $b$ negative and $1/2<\alpha<1$ in complete $a_1c-$plane }\label{mainfig4.4}
\end{figure}
\section{Examples for the FDDE (\ref{eq3.2})}\label{sec3.9}
In this subsection, we verify the results derived in the manuscript and presented in Figures \eqref{mainfig4.1}, \eqref{mainfig4.2}, \eqref{mainfig4.3} and (\ref{mainfig4.4}). We utilize the numerical methods proposed in \cite{bhalekar2019analysis} to solve these examples.\\
In the regions S and U of Figures \eqref{mainfig4.1}-\eqref{mainfig4.4}, we verified the results by taking a wide range of parameter values. As a representative example, we include a set of parameter values in these regions and provide the solution curve. The details are given in the Following tables \eqref{table4.1} to \eqref{table4.4}.\\
\begin{table} 
\begin{tabular}{ |p{6.5cm}||p{3.5cm}|p{3cm}|  }
 \hline
 Parameter values & Nature of solution &  Figure\\
 \hline
$a=0.45,b=1,c=4,\alpha=0.3,\tau=1.1$ (First quadrant in Figure \eqref{mainfig4.1}) & Unstable & Figure \eqref{figex4.1}(a)\\
\hline
  $a=-9,b=4,c=4,\alpha=0.4,\tau=0.17$(Second quadrant in Figure \eqref{mainfig4.2})& Stable & Figure \eqref{figex4.1}(b)\\
 \hline 
 $a=-8.6, b=1.1, c=-4,\alpha=0.49, \tau=0.17$ (Third quadrant in Figure \eqref{mainfig4.1})& Unstable & Figure \eqref{figex4.1}(c)\\
     \hline
  $a=-1, b=1.3, c=-0.8,\alpha=0.35, \tau=1.7$ (In the fourth quadrant region bounded by $c<0$ and the curve $\Gamma_2$ in Figure \eqref{mainfig4.1} )& Unstable & Figure \eqref{figex4.1}(d) \\
    \hline 
   $a=3.8, b=1.2, c=-4,\alpha=0.3, \tau=1$ (Region on the left side of $\Gamma_6$)& Stable & Figure\eqref{figex4.1}(e) \\
    \hline
\end{tabular}
\caption{System \eqref{eq3.2} $b>0$ and $0<\alpha<1/2$}\label{table4.1}
  \end{table}

\begin{figure}
	\subfloat[Divergent solution for $\alpha=0.3$, $c=4$ and $\tau=1.1$]{%
		\includegraphics[scale=0.46]{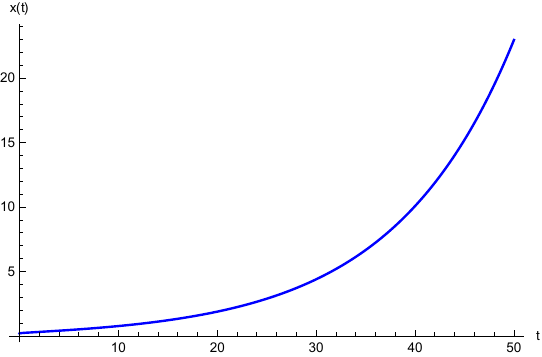}
	}\hspace{0.1cm}
	\subfloat[Convergent solution for $\alpha=0.4$, $a=-9$, $b=4$, $c=4$ and $\tau=0.17$]{%
		\includegraphics[scale=0.46]{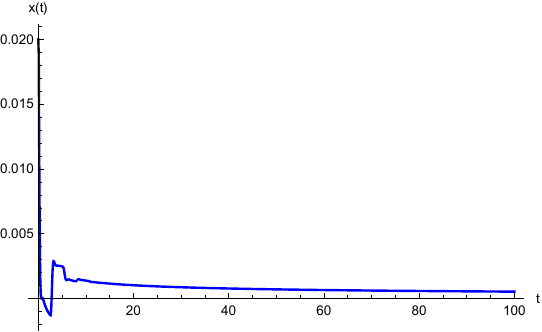}
	}\hspace{0.1cm}
	\subfloat[Divergent solution for $\alpha=0.49$, $a=-8.6$, $b=1.1$, $c=-4$ and $\tau=0.17$]{%
		\includegraphics[scale=0.46]{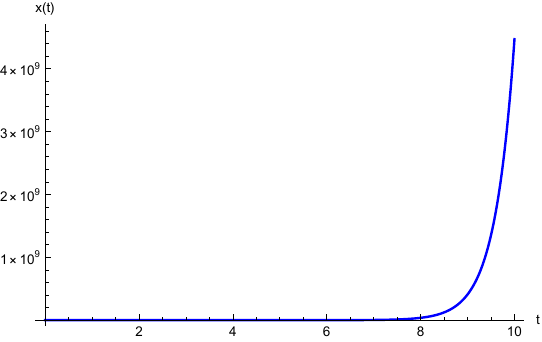}
	}\hspace{0.1cm}
	\subfloat[Unstable solution for $a=-1$, $b=1.3$, $\alpha=0.35$, $c=-0.8$ and $\tau=1.7$]{%
		\includegraphics[scale=0.47]{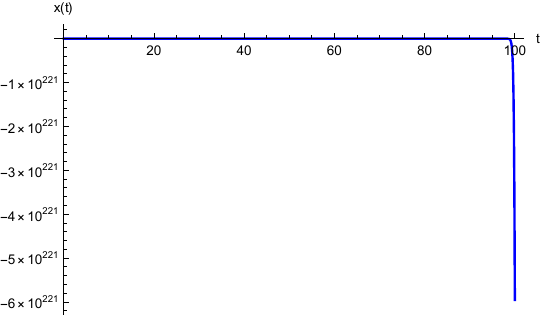}
	 }\hspace{0.1cm}
	\subfloat[Convergent solution for $a=3.8$, $b=1.2$, $\alpha=0.3$, $c=-1$ and $\tau=1$]{%
	\includegraphics[scale=0.47]{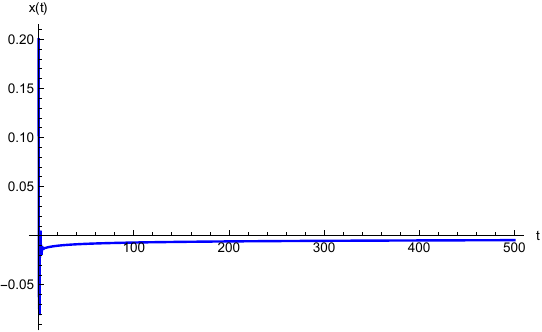}
	}	
\caption{ Figures representing Table \eqref{table4.1}}
	\label{figex4.1}
\end{figure}
\begin{table}
\begin{tabular}{ |p{6.5cm}||p{3.5cm}|p{3cm}|  }
 \hline
  Parameter values & Nature of solution & Figure\\  
 \hline
$a=4.5,b=-3,c=9,\alpha=0.45,\tau=0.3$ (First quadrant in Figure \eqref{mainfig4.2})  & Unstable & Figure \eqref{figex3.1}(a) \\
\hline
 $a=-8,b=-3.5,c=10,\alpha=0.29,\tau=0.8$ ((When $a_1<2b<0$ in Figure \eqref{mainfig4.2})  & Stable & Figure \eqref{figex3.1}(b) \\
 \hline 
 $a=-1.1,b=-3.5,c=-1,\alpha=0.38,\tau=1$ (Third quadrant in Figure \eqref{mainfig4.3}) & Unstable & Figure \eqref{figex3.1}(c)\\
 \hline
 $a=0.3,b=-0.1,c=-0.2,\alpha=0.38,\tau=0.3$ (Region bounded by $c<0$ and $\Gamma_{13}$ in Figure \eqref{mainfig4.2})  & Unstable & Figure \eqref{figex3.1}(d)\\
 \hline
 $a=7,b=-0.1,c=-2,\alpha=0.38,\tau=0.3$ (Region on the right side of $\Gamma_6$ )  & Stable & Figure \eqref{figex3.1}(e) \\
 \hline
\end{tabular}
 \caption{When $b<0$ and $0<\alpha<1/2$}\label{table4.2}
 \end{table}

\begin{figure}
	\subfloat[Divergent solution for $a=4.5$, $b=-3$, $\alpha=0.45$, $c=9$ and $\tau=0.3$]{%
		\includegraphics[scale=0.46]{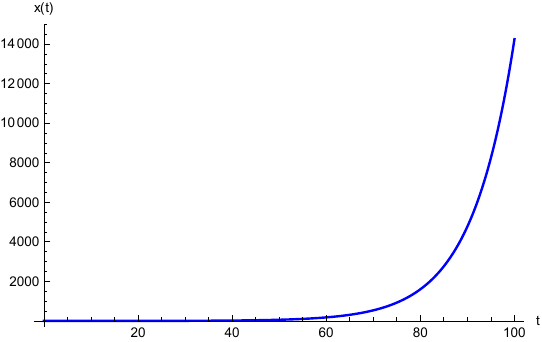}
	}\hspace{0.1cm}
	\subfloat[Convergent solution for $a=-8$, $b=-3.5$, $\alpha=0.29$, $c=10$ and $\tau=0.8$]{%
		\includegraphics[scale=0.46]{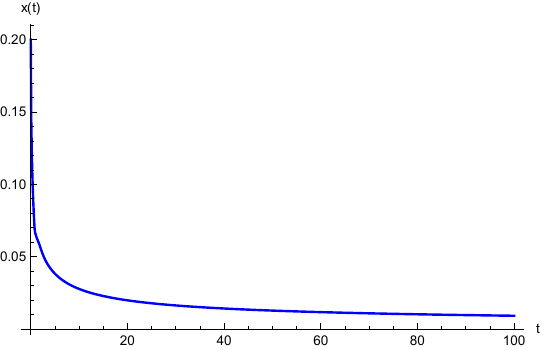}
	}\hspace{0.1cm}
	\subfloat[Divergent solution for $a=-1.1$, $b=-3.5$, $\alpha=0.38$, $c=-1$ and $\tau=1$]{%
		\includegraphics[scale=0.46]{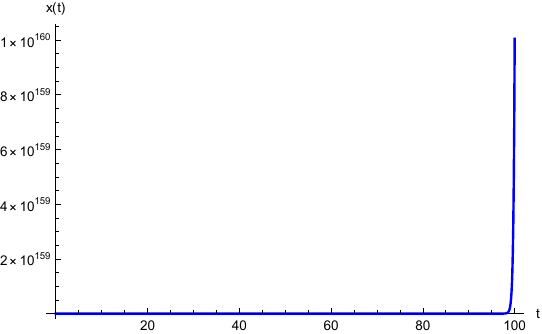}
	}\hspace{0.1cm}
	\subfloat[Divergent solution for $a=0.3$, $b=-0.1$, $\alpha=0.38$, $c=-0.2$ and $\tau=0.3$]{%
		\includegraphics[scale=0.47]{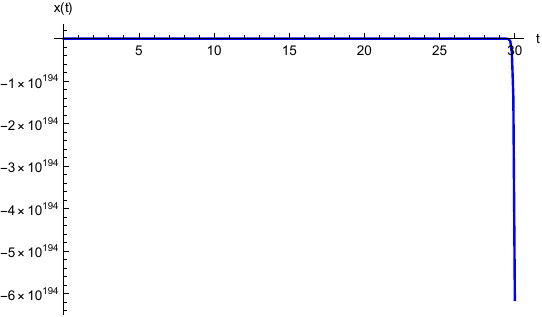}
	 }\hspace{0.1cm}
	\subfloat[Convergent solution for $a=7$, $b=-0.1$, $\alpha=0.38$, $c=-2$ and $\tau=0.3$]{%
	\includegraphics[scale=0.47]{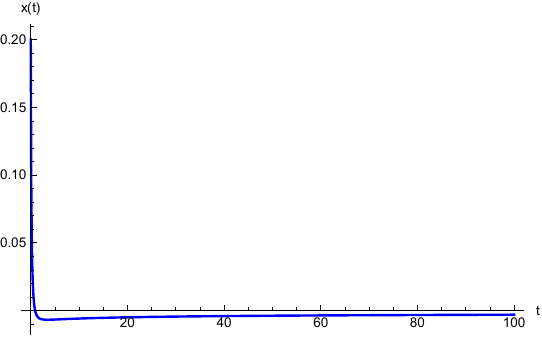}
	}	
\caption{Table \eqref{table4.2} for $b<0$ and $0<\alpha<1/2$}
	\label{figex3.1}
\end{figure}

\begin{table}
\begin{tabular}{ |p{8cm}||p{3.5cm}|p{3cm}|  }
 \hline
 Parameter values & Nature of solution & Figure\\  
 \hline
$a=1.06,b=8,c=4,\alpha=0.8,\tau=0.09$ (First quadrant in Figure \eqref{mainfig4.3}) & Unstable & Figure (\ref{figex4.3}(a)) \\
\hline
     $a=-8,b=3.5,c=4,\alpha=0.75,\tau=0.6$ (Region on the left side of $\Gamma_7$ in Figure \eqref{mainfig4.3})  & Stable & Figure (\ref{figex4.3} (b))\\
 \hline 
 $a=-8,b=5,c=-9,\alpha=0.93,\tau=0.9$ (Unstable region in the third quadrant in Figure \eqref{mainfig4.3})  & Unstable & Figure (\ref{figex4.3} (c)) \\
 \hline
 $a=-4.9,b=5,c=-2,\alpha=0.96,\tau=0.9$ (Region on the left side of $\Gamma_2$ in Figure \eqref{mainfig4.3})  & Unstable & Figure (\ref{figex4.3} (d))\\
 \hline
 $a=200,b=5,c=-2,\alpha=0.96,\tau=2.3$ (Region on the right side of $\Gamma_8$ in Figure \eqref{mainfig4.3})    & Stable & Figure (\ref{figex4.3} (e))\\
 \hline
 \end{tabular}
 \caption{System \eqref{eq3.2} when $b>0$ and $1/2<\alpha<1$}\label{table4.3}

 \end{table}
 
\begin{figure}
	\subfloat[Divergent solution for $a=1.06$, $b=8$, $\alpha=0.8$, $c=4$ and $\tau=0.09$]{%
		\includegraphics[scale=0.46]{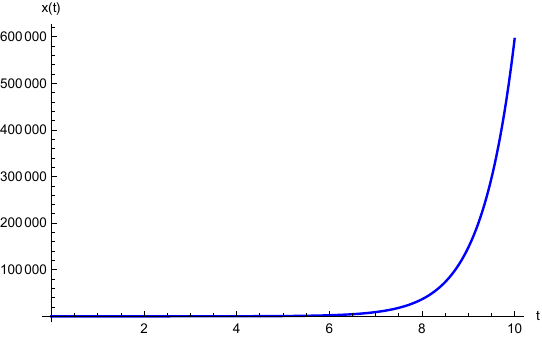}
	}\hspace{0.1cm}
	\subfloat[Convergent solution for $a=-8$, $b=3.5$, $\alpha=0.75$, $c=4$ and $\tau=0.6$]{%
		\includegraphics[scale=0.46]{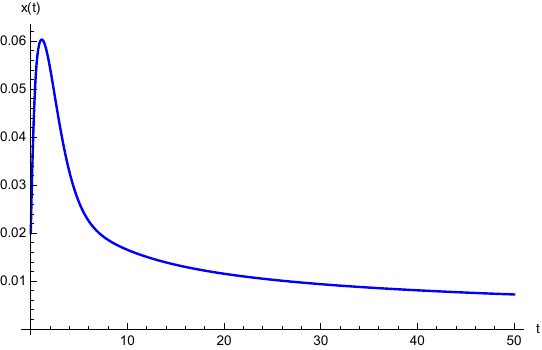}
	}\hspace{0.1cm}
	\subfloat[Divergent solution for $a=-8$, $b=5$, $\alpha=0.93$, $c=-9$ and $\tau=0.9$]{%
		\includegraphics[scale=0.46]{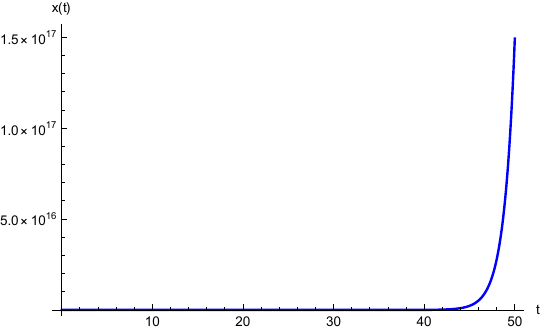}
	}\hspace{0.1cm}
	\subfloat[Divergent solution for $a=-4.9$, $b=5$, $\alpha=0.96$, $c=-2$ and $\tau=0.9$]{%
		\includegraphics[scale=0.47]{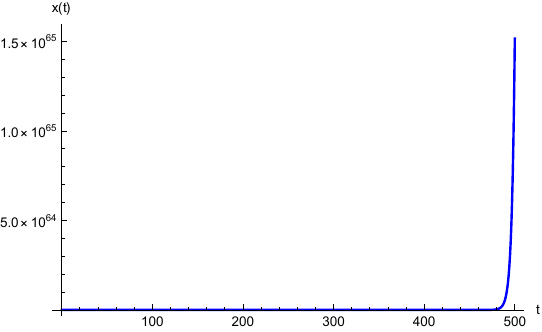}
	 }\hspace{0.1cm}
	\subfloat[Convergent solution for $a=200$, $b=5$, $\alpha=0.96$, $c=-2$ and $\tau=2.3$]{%
	\includegraphics[scale=0.47]{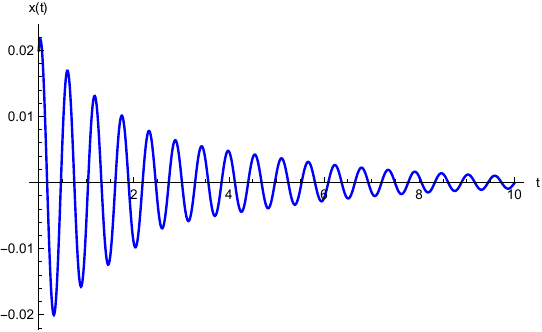}
	}	
\caption{Table  for $b>0$ and $1/2<\alpha<1$}
	\label{figex4.3}
\end{figure}
\begin{table}
\begin{center}
\begin{tabular}{ |p{8cm}||p{3.5cm}|p{3cm}|  }
 \hline
  Parameter values & Nature of solution & Figure\\  
 \hline
$a=4,b=-3.3,c=19,\alpha=0.85,\tau=3.5$ (First quadrant in Figure \eqref{mainfig4.4})  & Unstable & Figure (\ref{figex4.4} (a)) \\
\hline
$a=-7,b=-3.3,c=19,\alpha=0.85,\tau=1.8$ (Second quadrant in Figure \eqref{mainfig4.4})  & Stable & Figure (\ref{figex4.4} (b))\\
 \hline 
$a=-7,b=-3.3,c=-5,\alpha=0.79,\tau=1.3$ (Third quadrant in Figure \eqref{mainfig4.4})  & Unstable & Figure (\ref{figex4.4} (c)) \\
 \hline
$a=3.002,b=-3,c=-1,\alpha=0.7,\tau=1.3$ (Region on the left side of $\Gamma_{18}$ in Figure \eqref{mainfig4.4}) & Unstable & Figure (\ref{figex4.4} (d)) \\
 \hline
$a=10,b=-3,c=-2,\alpha=0.7,\tau=1.7$ (Region on the right side of $\Gamma_{10}$ in Figure \eqref{mainfig4.4})  & Stable & Figure (\ref{figex4.4} (e))\\
 \hline
 \end{tabular}
 \caption{System \eqref{eq3.2} when $b<0$ and $1/2<\alpha<1$}\label{table4.4}
\end{center}
\end{table}
\begin{figure}
	\subfloat[Divergent solution for $a=4$, $b=-3.3$, $\alpha=0.85$, $c=19$ and $\tau=3.5$]{%
		\includegraphics[scale=0.46]{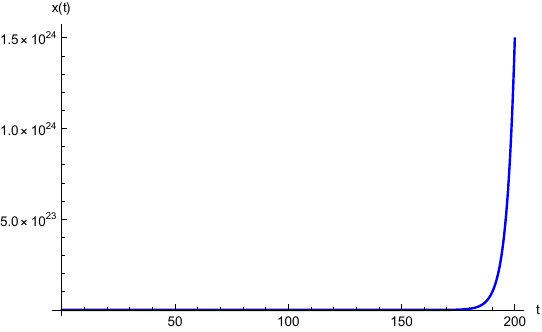}
	}\hspace{0.1cm}
	\subfloat[Convergent solution for $a=-7$, $b=-3.3$, $\alpha=0.85$, $c=19$ and $\tau=1.8$]{%
		\includegraphics[scale=0.46]{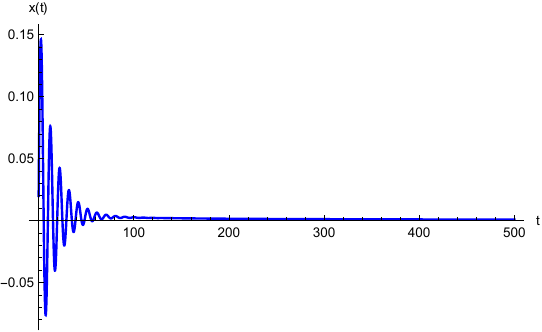}
	}\hspace{0.1cm}
	\subfloat[Unstable solution for $a=-7$, $b=-3.3$, $\alpha=0.79$, $c=-5$ and $\tau=1.3$]{%
		\includegraphics[scale=0.46]{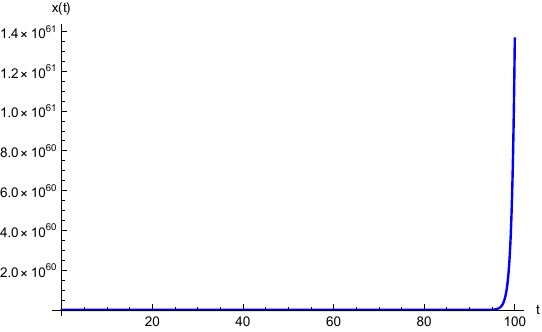}
	}\hspace{0.1cm}
	\subfloat[Unstable solution for $a=3.002$, $b=-3$, $\alpha=0.7$, $c=-1$ and $\tau=1.3$]{%
		\includegraphics[scale=0.47]{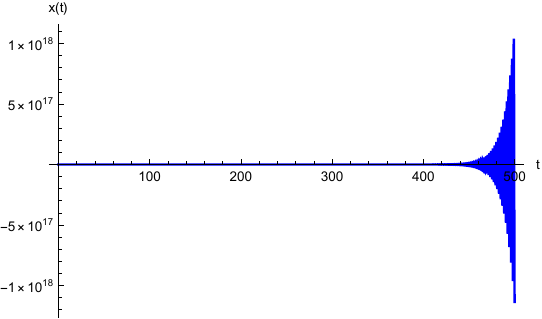}
	 }\hspace{0.1cm}
	\subfloat[Stable solution for $a=10$, $b=-3$, $\alpha=0.7$, $c=-2$ and $\tau=1.7$]{%
	\includegraphics[scale=0.47]{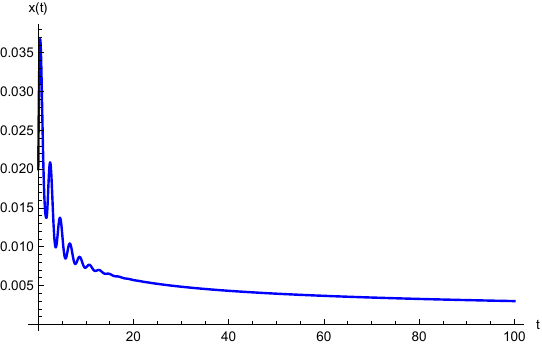}
	}	
\caption{Table for $b<0$ and $1/2<\alpha<1$}
	\label{figex4.4}
\end{figure}

Now, Let us consider the examples in each cases where the stability depend on delay parameter.\\
\begin{Ex}
Consider $b>0$ and $0<\alpha<1/2$ (Figure \eqref{mainfig4.1}). Only region that depends on $\tau$ is the region bounded by $\Gamma_2$ and $\Gamma_5$. Let us fix $c=-0.4$, $\alpha=0.3$ and $b=1$. At $\Gamma_2$, $\Gamma_{11}$ and $\Gamma_5$, the bifurcation values of $a_1$ are $0.78726$, $2.78762$ and $2.8219$ or $a=-0.21274$, $1.78762$ and $1.8219$ respectively. So, the system \eqref{eq3.2} will be unstable for $a<-0.21274$. We will have SSR for $-0.21274<a<1.78762$ and SS for $1.78762<a<1.8219$ from Figure \eqref{mainfig4.1}. System becomes stable $\forall\tau\geq0$ if $a>1.8219$. The unstable solution for $a=-0.7$ and $\tau=0.1$ is given in Figure \eqref{figex4.5}(a). For $a=1.7$ we get two positive values of $v^\alpha$ as $1.12581$ and $2.12454$. Corresponding to $v^\alpha=1.12581$ the critical values of delay are $2.18454, 6.41737, 10.6502,\ldots$. Furthermore, $v^\alpha=2.12454$ gives critical $\tau$ as $0.212729, 0.722411, 1.23209,\ldots$. The SSR is given by $\tau\in[0,0.212729)$. Note that, at the critical values $0.722411$ and $1.23209$, $Re\Big(\dfrac{d\lambda}{d\tau}|_{u=0}\Big)>0$ as described in Section \eqref{section3.8}. Therefore, the system \eqref{eq3.2} remains unstable for $\tau>0.212729$.  The stable solution for $\tau=0.1$ is shown in Figure \eqref{figex4.5}(b) and the unstable solution for $\tau=0.3$ is given in Figure \eqref{figex4.5}(c). Now let us consider $a=1.8$ in the SS region. For this parameter the two positive values of $v^\alpha$ are $1.46349$ and $1.88464$. Corresponding to $v^\alpha=1.46349$, we get the critical values of $\tau$ as $0.874706, 2.64025, 4.40578, 6.17132,\ldots$ and corresponding to $v^\alpha=1.88464$ we get $0.343884, 1.10378, 1.86368,\ldots$. We have $Re\Big(\dfrac{d\lambda}{d\tau}|_{u=0}\Big)>0$ at $v^\alpha=1.88464$ and $Re\Big(\dfrac{d\lambda}{d\tau}|_{u=0}\Big)<0$ at  $v^\alpha=1.46349$. So, the characteristics roots will shift from left to right half plane at $\tau=0.343884, 1.10378, 1.86368,\ldots$ and those roots will again shift back at $\tau=0.874706, 2.64025, 4.40578, 6.17132,\ldots$.  We get stable solution for $\tau=0.3$(cf. Figure \eqref{figex4.5}(d)), unstable solution for $\tau=0.4$(cf. Figure \eqref{figex4.5}(e)) and again stable solution for $\tau=1$(cf. Figure \eqref{figex4.5}(f)).
\end{Ex}
\begin{figure}
	\subfloat[Unstable solution for $a=-0.7$, $b=1$, $\alpha=0.3$, $c=-0.4$ and $\tau=0.1$]{%
		\includegraphics[scale=0.46]{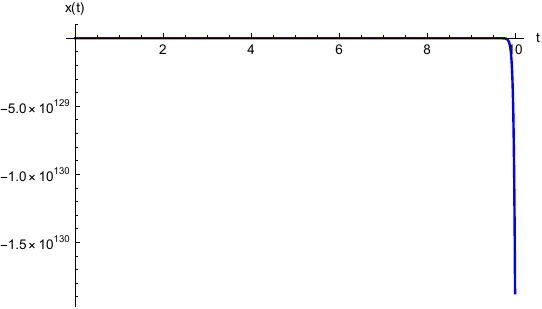}
	}\hspace{0.1cm}
	\subfloat[Stable solution for $a=1.7$, $b=1$, $\alpha=0.3$, $c=-0.4$ and $\tau=0.1$]{%
		\includegraphics[scale=0.46]{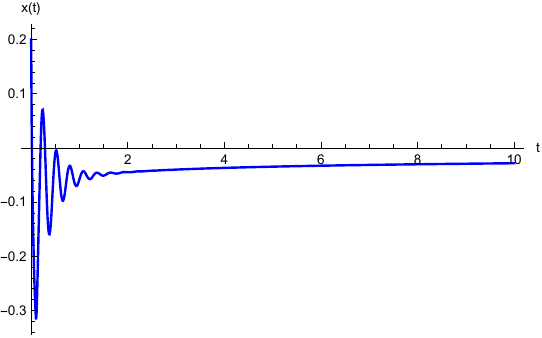}
	}\hspace{0.1cm}
	\subfloat[Unstable solution for $a=1.7$, $b=1$, $\alpha=0.3$, $c=-0.4$ and $\tau=0.3$]{%
		\includegraphics[scale=0.46]{unstablesolution10.pdf}
	}\hspace{0.1cm}
	\subfloat[Stable solution for $a=1.8$, $b=1$, $\alpha=0.3$, $c=-0.4$ and $\tau=0.3$]{%
		\includegraphics[scale=0.46]{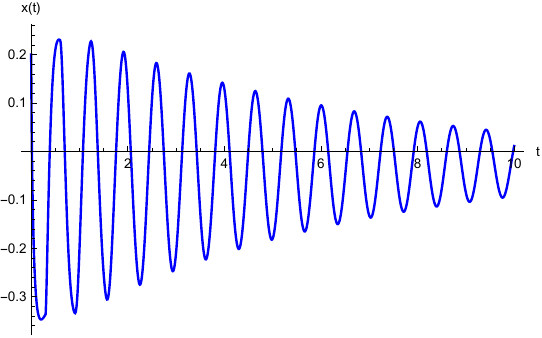}
	 }\hspace{0.1cm}
	\subfloat[Unstable solution for $a=1.8$, $b=1$, $\alpha=0.3$, $c=-0.4$ and $\tau=0.4$]{%
	\includegraphics[scale=0.46]{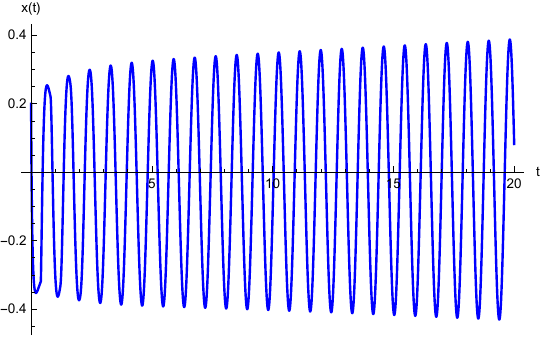}
	}\hspace{0.1cm}
	\subfloat[Stable solution for $a=1.8$, $b=1$, $\alpha=0.3$, $c=-0.4$ and $\tau=1$]{%
	\includegraphics[scale=0.46]{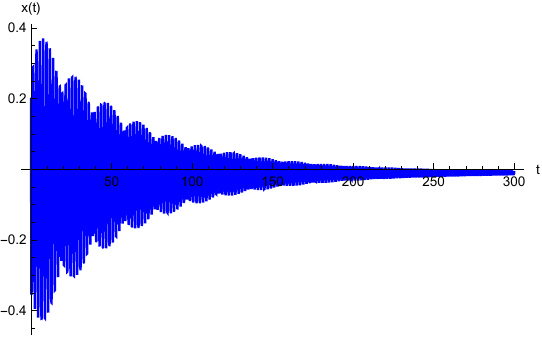}
	}	
\caption{}
	\label{figex4.5}
\end{figure}

\begin{Ex}\label{eg4.1}
Consider the case $b<0$ and $0<\alpha<1/2$ (Figure \eqref{mainfig4.2}). Let us verify the region $c>0$ and $2b<a_1<0$. We have $a=-0.3$, $b=-10$, $\alpha=0.3$ and $c=0.5$. We get only one positive value of $v^\alpha$ as $3.59088$ and corresponding to that $v^\alpha$ we have $Re\Big(\dfrac{d\lambda}{d\tau}|_{u=0}\Big)>0$. So, the smallest critical value of delay from equation \eqref{smallest critical value of delay} is $\tau=0.0336727$. The stable and unstable solution for $\tau=0.02$ and $\tau=0.04$ are given in Figure \eqref{figex4.6}(a) and \eqref{figex4.6}(b), respectively.
\end{Ex}
\begin{figure}
	\subfloat[Stable solution for $a=-0.3$, $b=-10$, $\alpha=0.3$, $c=0.5$ and $\tau=0.02$]{%
		\includegraphics[scale=0.8]{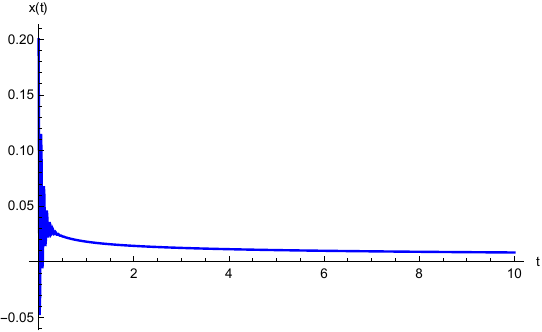}
	}\hspace{0.1cm}
	\subfloat[Unstable solution for $a=-0.3$, $b=-10$, $\alpha=0.3$, $c=0.5$ and $\tau=0.04$]{%
		\includegraphics[scale=0.8]{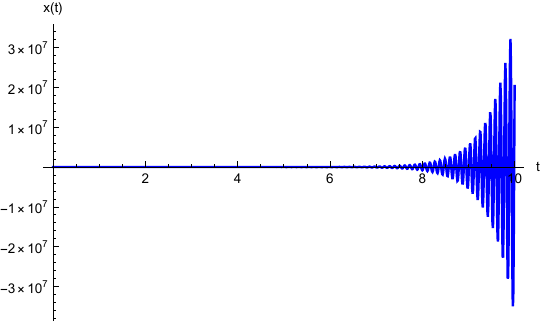}
	}
\caption{Figure of Example \eqref{eg4.1}}
	\label{figex4.6}
\end{figure}
\begin{Ex}
Now let us consider the fourth quadrant of Figure \eqref{mainfig4.2} that depend on delay $\tau$. When $b=-1$, $\alpha=0.45$ we have $c_0=-0.195086$.
\begin{itemize}
\item[\ding{118}] So, when $c=-0.3<c_0$ then at bifurcation curve $\Gamma_{13}$, $a_1=0.674322$ ($a=1.674322$), at $\Gamma_2$, $a_1=1.44121$ ($a=2.44121$), at $\Gamma_{12}$, $a_1=1.50111$ ($a=2.50111$) and at $\Gamma_{6}$, $a_1=1.62678$ ($a=2.62678$).
 \begin{itemize} 
 \item[$\bullet$] If $a=2.35$ the two positive values of $v^\alpha$ are $2.03726$ and $3.1629$. We have $Re\Big(\dfrac{d\lambda}{d\tau}|_{u=0}\Big)>0$ at $v^\alpha=3.1629$ and $Re\Big(\dfrac{d\lambda}{d\tau}|_{u=0}\Big)<0$ at $v^\alpha=2.03726$. The critical values of $\tau$ corresponding to $2.03726$ are $0.0192191,1.31166,2.60411,\ldots$ and for $3.1629$ we have $0.397771, 0.884042, 1.37031,\ldots$. So, we get unstable solution for $\tau\in[0,0.0192191)$, stable solution for $\tau\in(0.0192191,0.397771)$ and it will remain unstable $\forall\tau>0.397771$. The unstable solution for $\tau=0.01$ (Figure \eqref{figex4.7}(a)), stable solution for $\tau=0.02$ (Figure \eqref{figex4.7}(b)) and again unstable solution for $\tau=0.5$ (Figure \eqref{figex4.7}(c)) (Instability switch).
 \item[$\bullet$]  When $a=2.48$, we are in the SSR region from Figure \eqref{mainfig4.2}. We get positive values of $v^\alpha$ as $2.26432$ and $3.08213$.  Note that we have $Re\Big(\dfrac{d\lambda}{d\tau}|_{u=0}\Big)<0$ at $v^\alpha=2.26432$ and $Re\Big(\dfrac{d\lambda}{d\tau}|_{u=0}\Big)>0$ at $v^\alpha=3.08213$. Critical values of $\tau$ are $1.01403, 2.03598, 3.05793,\ldots$ corresponding to $2.264320$ and $0.450092, 0.965138, 1.48018,\ldots$ corresponding to $3.08213$. So, we have stable solution for $\tau\in[0,0.450092)$ and unstable for all $\tau>0.450092$. The stable solution for $\tau=0.3$ is given in Figure \eqref{figex4.7}(d) and unstable solution for $\tau=0.5$ is given in Figure \eqref{figex4.7}(e).
 \item[$\bullet$] Now, let us take $a=2.6$ so that we are in the SS region from Figure \eqref{mainfig4.2}. So, by solving equation \eqref{value of valpha} for $v^\alpha$ we get $2.56269$ and $2.91154$. We have $Re\Big(\dfrac{d\lambda}{d\tau}|_{u=0}\Big)<0$ at $v^\alpha=2.56269$ and $Re\Big(\dfrac{d\lambda}{d\tau}|_{u=0}\Big)>0$ at $v^\alpha=2.91154$. The critical values of $\tau$ are $0.740921, 1.51711, 2.2933,\ldots$ corresponding to $2.56269$ and $0.522197,\\
  1.10671, 1.69123,\ldots$ corresponding to $2.91154$. We get a stability switch. The stable solution for $\tau=0.4$ is given in Figure \eqref{figex4.7}(f), unstable solution for $\tau=0.6$ is given in Figure \eqref{figex4.7}(g) and stable solution for $\tau=1$ is given in Figure \eqref{figex4.7}(h).
 \end{itemize}
 \end{itemize}
\end{Ex}
\begin{figure}
	\subfloat[Unstable solution for $a=2.35$, $b=-1$, $\alpha=0.45$, $c=-0.3$ and $\tau=0.01$]{%
		\includegraphics[scale=0.46]{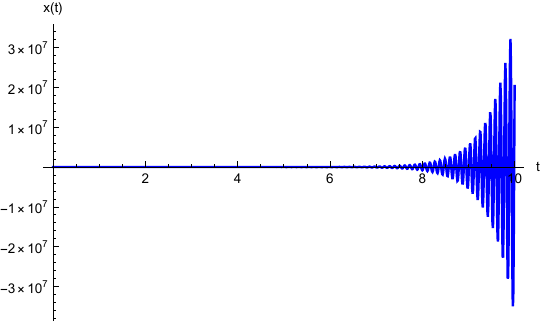}
	}\hspace{0.1cm}
	\subfloat[Stable solution for $a=2.35$, $b=-1$, $\alpha=0.45$, $c=-0.3$ and $\tau=0.02$]{%
		\includegraphics[scale=0.46]{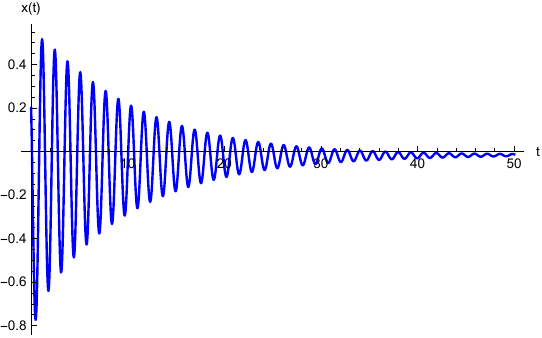}
	}\hspace{0.1cm}
	\subfloat[Unstable solution for $a=2.35$, $b=-1$, $\alpha=0.45$, $c=-0.3$ and $\tau=0.5$]{%
		\includegraphics[scale=0.46]{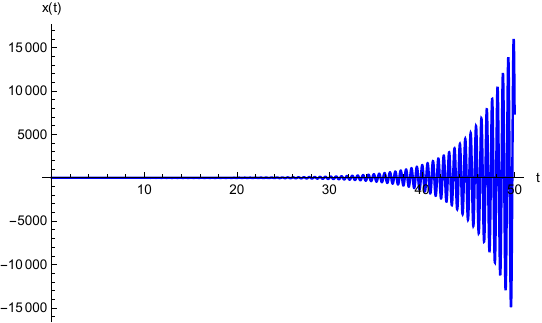}
	}\hspace{0.1cm}
	\subfloat[Stable solution for $a=2.48$, $b=-1$, $\alpha=0.45$, $c=-0.3$ and $\tau=0.3$]{%
		\includegraphics[scale=0.46]{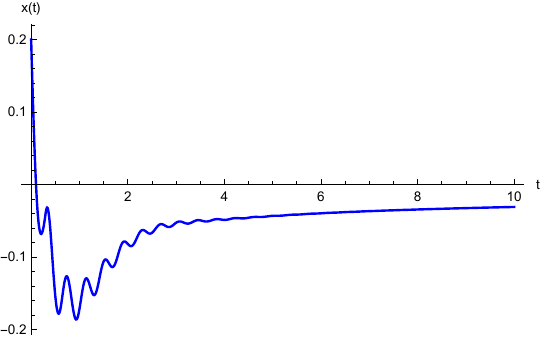}
	 }\hspace{0.1cm}
	\subfloat[Unstable solution for $a=2.48$, $b=-1$, $\alpha=0.45$, $c=-0.3$ and $\tau=0.5$]{%
	\includegraphics[scale=0.46]{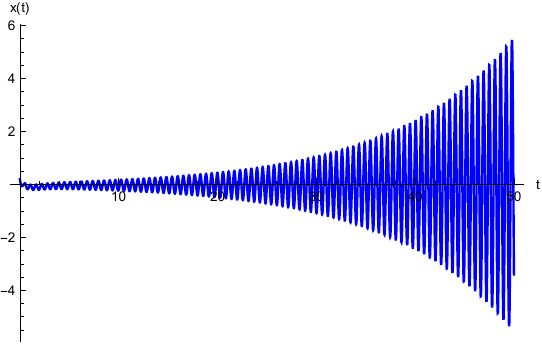}
	}\hspace{0.1cm}
	\subfloat[Stable solution for $a=2.6$, $b=-1$, $\alpha=0.45$, $c=-0.3$ and $\tau=0.4$]{%
	\includegraphics[scale=0.46]{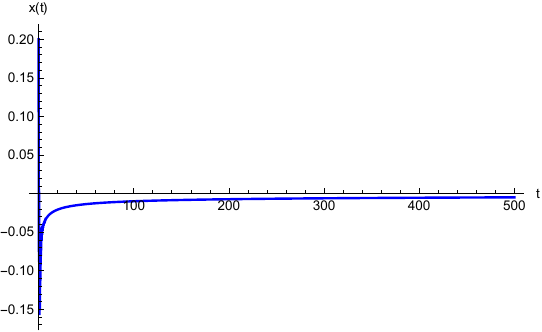}
	}\hspace{0.1cm}
	\subfloat[Unstable solution for $a=2.6$, $b=-1$, $\alpha=0.45$, $c=-0.3$ and $\tau=0.6$]{%
	\includegraphics[scale=0.46]{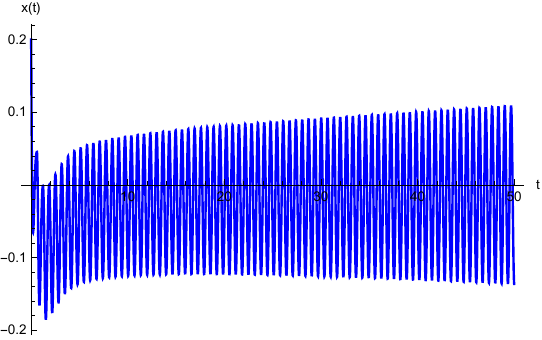}
	}\hspace{0.1cm}
	\subfloat[Stable solution for $a=2.6$, $b=-1$, $\alpha=0.45$, $c=-0.3$ and $\tau=1$]{%
	\includegraphics[scale=0.46]{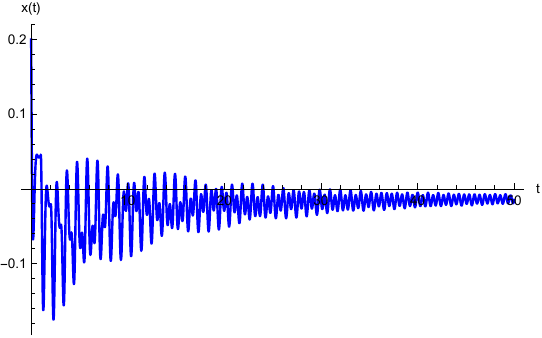}
	}	
\caption{}
	\label{figex4.7}
\end{figure}

\begin{Ex}
Consider $b>0$ and $1/2<\alpha<1$ in Figure \eqref{mainfig4.3}. Note that when $c>0$ the region bounded by $\Gamma_{7}$ and the vertical axis with $c>0$ is the only region that depends on delay $\tau$. Let us fix $b=1$ and $\alpha=0.8$ then $c_1=2.75575$ and $c_2=2.52097$. If $c=4>c_1$ then at the bifurcation curve $\Gamma_7$ we have $a_1=-0.0726$ or $a=-1.0726$ and at $\Gamma_{16}$, $a_1=-0.0418347$ or $a=-1.0418347$. Now, if we take $a=-1.06$ we get two positive values of $v^\alpha$ as $0.28028999$ and $0.385268362$. Note that $Re\Big(\dfrac{d\lambda}{d\tau}|_{u=0}\Big)<0$ at $0.28028999$ and $Re\Big(\dfrac{d\lambda}{d\tau}|_{u=0}\Big)>0$ at $0.385268362$. Corresponding to  $0.28028999$ the critical values of $\tau$ are $28.5127, 59.3212, 90.1297, 120.938,\ldots$ and corresponding to $0.385268362$ critical values of $\tau$ are $18.0739, 38.7741, 59.4744, 80.1746,\ldots$. So, we get stability switches here i.e. for $\tau\in[0,18.0739)$ we get stable solution (cf. Figure \eqref{figex4.14}(a) for $\tau=17$), $\tau\in(18.0739,28.5127)$ gives unstable solution (cf. Figure \eqref{figex4.14}(b) for $\tau=20$), $\tau\in(28.5127,38.7741)$ gives stable solution (cf. Figure \eqref{figex4.14}(c)) for $\tau=34$) and so on. Now, if we take $a=-1.02$, we get two positive value of $v^\alpha$ as $0.18462087812$ and $0.420968$ and $Re\Big(\dfrac{d\lambda}{d\tau}|_{u=0}\Big)<0$ at $0.18462087812$ and $Re\Big(\dfrac{d\lambda}{d\tau}|_{u=0}\Big)>0$ at $0.420968$. Corresponding to $0.18462087812$ we get critical values of $\tau$ are $49.7825, 101.702, 153.621, 205.54, 257.46,\ldots$ and corresponding to $0.420968$ we get critical $\tau$ are $15.7097, 34.2394, 52.7691, 71.2988,\ldots$. This gives SSR region. So, we get stable solution for $\tau\in[0,15.7097)$ and it will remain unstable $\forall\tau>15.7097$. 
\end{Ex}
\begin{figure}
	\subfloat[Stable solution for $a=-1.06$, $b=1$ and $\tau=17$]{%
		\includegraphics[scale=0.46]{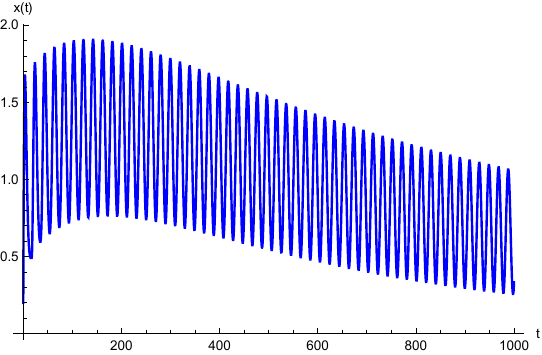}
	}\hspace{0.1cm}
	\subfloat[Unstable solution for $a=-1.06$, $b=1$ and $\tau=20$]{%
		\includegraphics[scale=0.46]{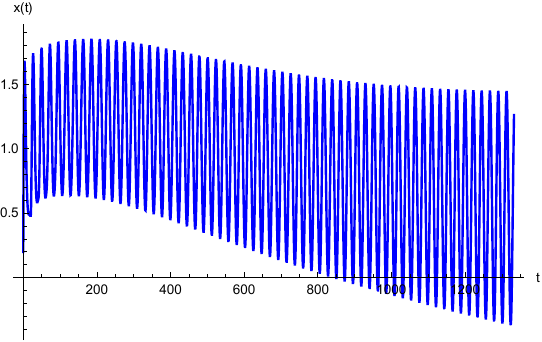}
	}\hspace{0.1cm}
	\subfloat[Stable solution for $a=-1.06$, $b=1$ and $\tau=34$]{%
		\includegraphics[scale=0.46]{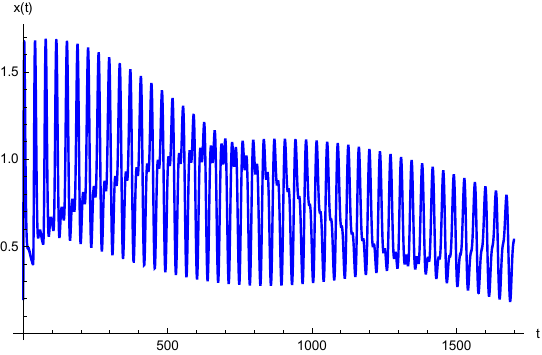}
	}	
\caption{}
	\label{figex4.14}
\end{figure}

\begin{Ex}
Consider $c<0$ in Figure \eqref{mainfig4.3}. Note that the value of $c_3$ is $-0.751566$ for $\alpha=0.8$ and $b=1$. Let us fix $c=-0.4>c_3$ then at $\Gamma_2$, $a_1=6.54508$ ($a=5.54508$) and at $\Gamma_8$, $a_1=10.5099$ or $a=9.5099$. If we choose $a=6$ then we are in the region bounded by the curves $\Gamma_2$ and $\Gamma_8$ from Figure \eqref{mainfig4.3}. With these parameters, we get two positive values of $v^\alpha$ viz. $3.52068$ and $4.19432$. Corresponding to $v^\alpha=3.52068$, we have $Re\Big(\dfrac{d\lambda}{d\tau}|_{u=0}\Big)<0$ and corresponding to $v^\alpha=4.19432$ we have $Re\Big(\dfrac{d\lambda}{d\tau}|_{u=0}\Big)>0$. The critical values of $\tau$ for $3.52068$ are $0.744534, 2.04739, 3.35024,\ldots$ and for $4.19432$ these are $0.0246074, 1.07138, 2.11816,\ldots$. We get stability switch $[0,0.0246074)(S)$; $(0.0246074,0.744534)(U)$; $(0.744534,1.07138)(S)$, $\ldots$. The stable solution for $\tau=0.01$ is given in Figure \eqref{figex4.8}(a). The unstable solution for $\tau=0.03$ is given in Figure \eqref{figex4.8}(b). Figure \eqref{figex4.8}(c) shows stable solution for $\tau=0.9$. So, we are in the SS region as shown in Figure \eqref{mainfig4.3}.  
\end{Ex}
\begin{figure}
	\subfloat[Stable solution for $\tau=0.01$]{%
		\includegraphics[scale=0.46]{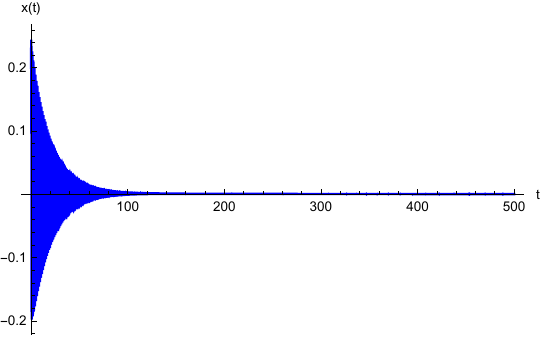}
	}\hspace{0.1cm}
	\subfloat[Unstable solution for $\tau=0.03$]{%
		\includegraphics[scale=0.46]{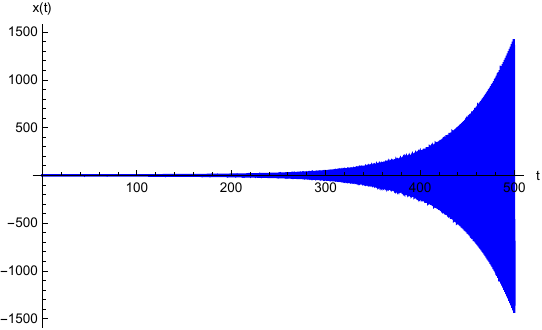}
	}\hspace{0.1cm}
	\subfloat[Stable solution for $\tau=0.9$]{%
		\includegraphics[scale=0.46]{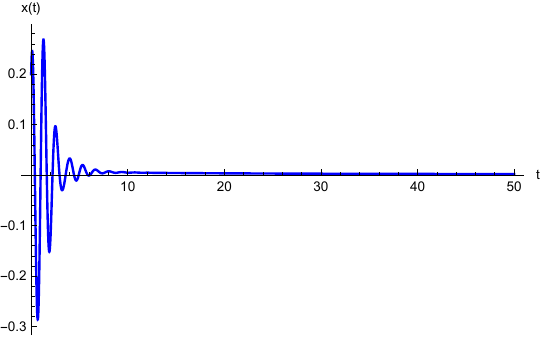}
	}	
\caption{}
	\label{figex4.8}
\end{figure}

\begin{Ex}
 Let us take $c>0$ in Figure \eqref{mainfig4.4}. We have two bifurcation values of $c$ viz. $c_7$ and $c_5$. Let us deal with each cases. When $b=-1$ and $\alpha=0.8$ we have $c_7=2.52097$ and $c_5=2.00571$.
\begin{itemize}
 \item[\ding{118}] When $c=9>c_7$ then at the bifurcation curve $\Gamma_9$ we have $a_1=-2.2168$ or $a=-1.2168$ and at $\Gamma_{16}$, $a_1=-2.08023$ or $a=-1.08023$.
\begin{itemize}
\item[$\bullet$] When $a<-1.2168$ then no positive value of $v$ exists and since system is stable at $\tau=0$ it will remain stable $\tau\geq0$.
\item[$\bullet$] When $-1.2168<a<-1.08023$ then we have SS from Figure \eqref{mainfig4.4}. So, let us take $a=-1.2166$ and the two positive values of $v^\alpha$ are $v_1=0.28027$ and $v_2=0.286791$. Note that $Re\big(\dfrac{d\lambda}{d\tau}|_{u=0}\big)>0$ at $v_2$ and $Re\big(\dfrac{d\lambda}{d\tau}|_{u=0}\big)<0$ at $v_1$. Critical values of delay corresponding to these $v^\alpha$ are given in the table \eqref{table4.5}. So, we get stable solution for $\tau\in[0,11.2217)$ (cf. Figure \eqref{figex4.10}(a) for $\tau=10$), unstable solution for $\tau\in(11.2217,11.7248)$ (cf. Figure \eqref{figex4.10}(b) for $\tau=11.4$) and stable solution for $\tau\in(11.7248,41.1597)$ (cf. Figure \eqref{figex4.10}(c) for $\tau=20$) so on. 
\item[$\bullet$] Now, if we take $a>-1.08023$ then we are in SSR region. So, let us take $a=-1.07$ we get two values of $v^\alpha$ as $v_1=0.13912$ and $v_2=0.342681$. $Re\big(\dfrac{d\lambda}{d\tau}|_{u=0}\big)<0$ at    $v_1$ and $Re\big(\dfrac{d\lambda}{d\tau}|_{u=0}\big)>0$ at $v_2$. The critical values corresponding to $v_1$ and $v_2$ are given in Table \eqref{table4.6}. So, we get stable region for $\tau\in[0,7.23693)$ and unstable solution for $\tau>7.23693$. Figure \eqref{figex4.10}(d) shows stable solution for $\tau=7$ and Figure \eqref{figex4.10}(e) shows unstable solution for $\tau=9$.
 \end{itemize}
 \item[\ding{118}] Now, let us take $c=2.2$ which is in between $c_7$ and $c_5$. We have $a_1=-2$ at $a_1=2b$ or $a=-1$, at $\Gamma_9$, $a_1=-1.9788$ or $a=-0.9788$ and at $\Gamma_{16}$, $a_1=-1.98325$ or $a=-0.98325$.
 \begin{itemize}
 \item[$\bullet$] Let $a=-0.99$ we are in the SSR region from Figure \eqref{mainfig4.4} and there exist only one positive value of $v^\alpha=0.0384923$ and $Re\big(\dfrac{d\lambda}{d\tau}|_{u=0}\big)>0$ at $v^\alpha$. The critical values of $\tau$ corresponding to  $v^\alpha$ are $182.001$, $550.522$, $919.044$, $1287.56$ etc. So, we have the stable solution of system \eqref{eq3.2} for $\tau\in[0,182.001)$. The stable solution for $\tau=9$ is shown in Figure \eqref{figex4.10}(e) and one of the complex root with positive real part for $\tau=184$ is given by $3.79477*10^{-7} + 0.0168663I$. We know that one positive root is sufficient for the instability.
 \item[$\bullet$] Now, let us take $a=-0.98$ then from Figure \eqref{mainfig4.4} we are in SS region. We get three positive value of $v^\alpha$ namely $v_1=0.114518$,
$v_2=0.208169$ and $v_3=0.357206$. Note that $Re\big(\dfrac{d\lambda}{d\tau}|_{u=0}\big)>0$ at $v_1$, $Re\big(\dfrac{d\lambda}{d\tau}|_{u=0}\big)<0$ at $v_2$ and $Re\big(\dfrac{d\lambda}{d\tau}|_{u=0}\big)>0$ at $v_3$. So, the critical values of $\tau$ corresponding to each $v_1$, $v_2$ and $v_3$ are given in Table \eqref{table4.7}. We get stability switch $[0,9.46047)(S)$; $(9.46047,20.5158)(U)$; $(20.5158,32.2131)(S)$, $\ldots$.                                                                                                                                                                                                                                                                                                                                                                                                                                                                                                                                                                                                                                                                                                                                                                                                                                                                                                                                                                                                                                                                                                                                                                                                                                                                                                                                                                                                                                                                                                                                                                                                                                                                                                                                                                                                                            
\end{itemize}
\item[\ding{118}] If $c=1<c_5$ then we have only one bifurcation value of $a_1=-2$, $(a=-1)$ from figure \eqref{mainfig4.4}. So,
\begin{itemize}  
\item[$\bullet$] if we take $a=-1.1$ then no positive $v^\alpha$ exist and system become stable $\forall\tau\geq0$ 
\item[$\bullet$] if we take $a=-0.9$ then only one positive $v^\alpha=0.364932$ exist and critical values of $\tau$ are $9.52706, 31.6791, 53.8312,\ldots$. We get stable solution for $\tau\in[0,9.52706)$ (cf. Figure \eqref{figex4.11}(a) for $\tau=9$) and unstable for $\tau>9.52706$(cf. Figure \eqref{figex4.11}(b) for $\tau=10$).
\end{itemize}
\end{itemize}
\end{Ex}
\begin{figure}
\subfloat[Stable solution for $\tau=10$]{%
		\includegraphics[scale=0.46]{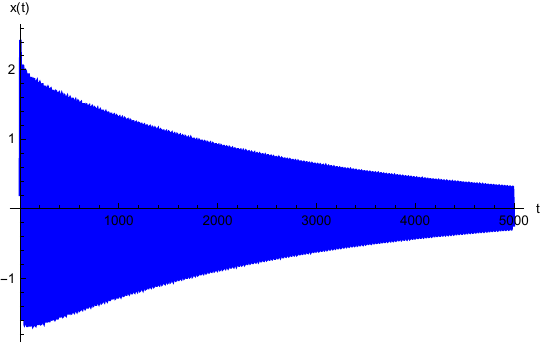}
	}\hspace{0.1cm}
	\subfloat[Unstable solution for $\tau=11.4$]{%
		\includegraphics[scale=0.46]{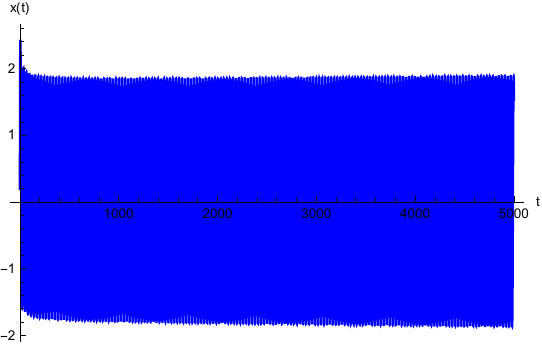}
	}\hspace{0.1cm}
	\subfloat[Stable solution for $\tau=20$]{%
		\includegraphics[scale=0.46]{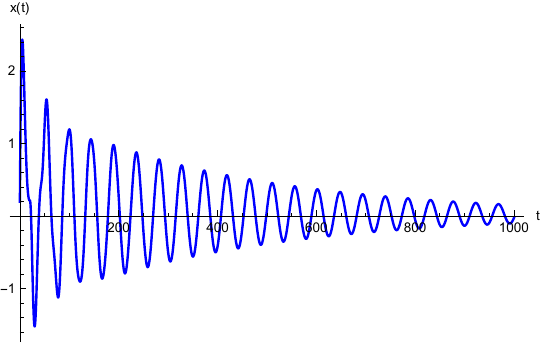}
	}\hspace{0.1cm}
	\subfloat[Stable solution for $a=-1.07$ and $\tau=10$]{%
		\includegraphics[scale=0.46]{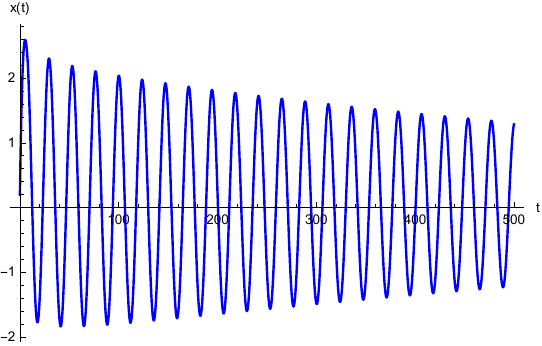}
	}\hspace{0.1cm}
	\subfloat[Unstable solution for $a=-1.07$ and $\tau=9$]{%
		\includegraphics[scale=0.46]{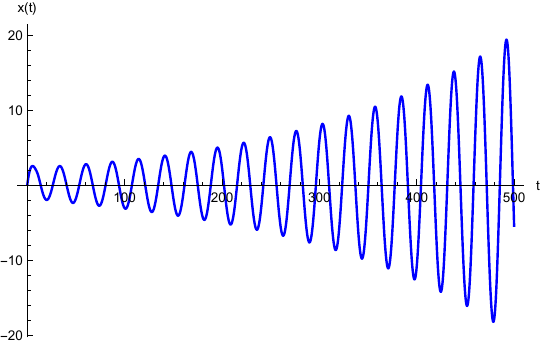}
	}\hspace{0.1 cm}
	\subfloat[Stable solution for $a=-0.99$ and $\tau=9$]{%
		\includegraphics[scale=0.46]{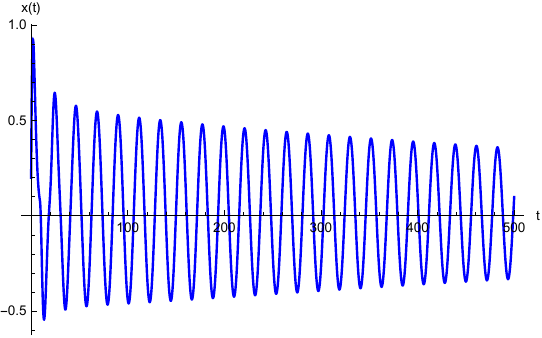}
	}\hspace{0.1cm}	
\caption{}
	\label{figex4.10}
\end{figure}

\begin{table}
\begin{center}
\begin{tabular}{ |p{3cm}||p{1.5cm}| p{1.5cm}|p{1.5cm}|p{1.5cm}| }
 \hline
 $v_1=0.28027$& $11.7248$ & $42.536$ & $73.3473$ & $104.159$ \\  
 \hline
$v_2=0.286791$ & $11.2217$ & $41.1597$ & $71.0978$ & $101.036$\\
\hline
\end{tabular}
 \caption{value of $v^\alpha$ and critical values of delay $\tau$ }\label{table4.5}
\end{center}
\end{table}

\begin{table}
\begin{center}
\begin{tabular}{ |p{3cm}||p{1.5cm}| p{1.5cm}|p{1.5cm}|p{1.5cm}| }
 \hline
 $v^\alpha=0.13912$ & $34.1872$& $108.138$& $182.089$ &$256.04$ \\  
 \hline
$v^\alpha= 0.342681$ & $7.23693$& $31.2013$& $55.1657$& $79.1301$\\
\hline
\end{tabular}
 \caption{Value of $v^\alpha$ and critical values of delay $\tau$ }\label{table4.6}
\end{center}
\end{table}

\begin{table}
\begin{center}
\begin{tabular}{ |p{3cm}||p{1.5cm}| p{1.5cm}|p{1.5cm}|p{1.5cm}| }
 \hline
 $v_1=0.114518$& $45.264$ & $139.581$ &$233.898$& $328.215$  \\  
 \hline
$v_2=0.208169$ & $20.5158$ & $ 65.2005$ &$109.885$&$ 154.57$\\
\hline
$v_3=0.357206$ & $9.46047$ & $32.2131$& $54.9656$& $77.7182$\\
\hline
\end{tabular}
 \caption{Value of $v^\alpha$ and critical values of delay $\tau$ }\label{table4.7}
\end{center}
\end{table}
\begin{figure}
\subfloat[Stable solution for $\tau=9$]{%
		\includegraphics[scale=0.66]{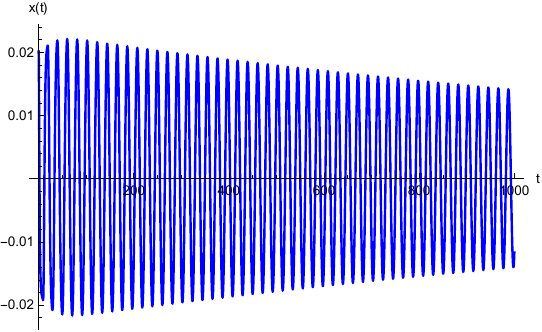}
	}\hspace{0.1cm}
	\subfloat[Unstable solution for $\tau=10$]{%
		\includegraphics[scale=0.66]{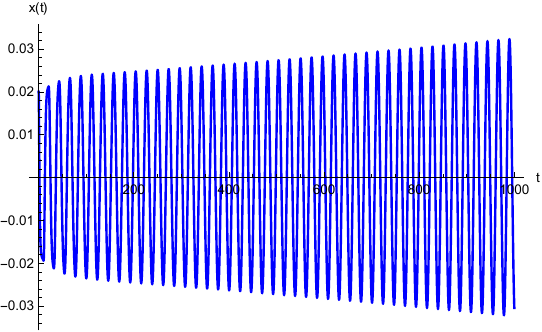}
	}\hspace{0.1cm}	
\caption{}
	\label{figex4.11}
\end{figure}

\begin{Ex}
Let us take $c<0$. The region bounded between $\Gamma_{18}$ and $\Gamma_{10}$ depends on delay $\tau$ (cf. Figure \eqref{mainfig4.4}). If we fix $b=-1$ and $\alpha=0.8$ then the intersection of $\Gamma_{17}$ and $\Gamma_2$ is $c_6=-1.5598$.
\begin{itemize}
\item[\ding{118}] Let us first take $c=-0.9>c_6$ and then at $\Gamma_{18}$, $a_1=0.0220747$ or $a=1.0220747$ at $\Gamma_2$, $a_1=2.90893$ or $a=3.90893$ and at $\Gamma_{10}$, $a_1=4.695$ or $a=5.695$.
\begin{itemize} 
\item[$\bullet$] If we fix $a=1.03$ we get two positive value of $v^\alpha$ as $0.0889235$ and $1.45654$. The critical values of delay corresponding to $0.088923516$ are $1.65726, 131.05, 260.442,\ldots$ and corresponding to $1.4565436$ we get $1.79705, 5.72373, 9.65041,\ldots$. So, we have IS from Figure \eqref{mainfig4.4} i.e $\tau\in[0,1.65726)(U)$; $(1.65726,1.79705)(S)$ and again unstable $\forall\tau>1.79705$. The unstable solution for $\tau=1.5$ is given in Figure \eqref{figex4.12}(a). The stable solution for $\tau=1.7$ is given in Figure \eqref{figex4.12}(b). Unstable solution for $\tau=1.9$ shown in Figure \eqref{figex4.12}(c).
\item[$\bullet$] If we take $a=4.1$ we are in the SS region from Figure \eqref{mainfig4.4}. We get two positive values of $v^\alpha$ as $1.86275$ and $2.36524$. The critical values of delay corresponding to $1.86275$ and $2.36524$ are $2.85785, 5.74513,8.6324,\ldots$ and $1.34019, 3.48227, 5.62435,\ldots$ respectively. So, we get stable solution for $\tau\in[0,1.34019)$ (Figure \eqref{figex4.12}(d) for $\tau=1.2$), unstable solution for $\tau\in(1.34019,2.85785)$ (Figure \eqref{figex4.12}(e) for $\tau=2.4$), again stable solution for $\tau\in(2.85785,3.48227)$ (Figure \eqref{figex4.12}(f) for $\tau=3.3$) and so on.
\item Now for $a>5.695$ we get stable solution for all $\tau\geq0$ because there does not exist any positive value of $v^\alpha$ and system is stable at $\tau=0$ so it will remain stable $\forall\tau\geq0$.
\end{itemize}
\item[\ding{118}] Now let us take $c=-3<c_6$. At the bifurcation curve $\Gamma_{18}$, $a_1=0.00178628$ or $a=1.00178628$, at $\Gamma_2$, $a_1=0.872678$ or $a=1.872678$, at $\Gamma_{17}$, $a_1=1.24228$ or $a=0.24228$ and at $\Gamma_{10}$ we have $a_1=2.34685$ or $a=3.34685$. 
\begin{itemize}
\item[$\bullet$] If $a=1.8$ then we are in IS region from Figure \eqref{mainfig4.4}. The two positive values of $v^\alpha$ viz. $v_1=0.514075$ and $v_2=0.955756$. We have $Re\big(\dfrac{d\lambda}{d\tau}|_{u=0}\big)>0$ at $v_2$ and $Re\big(\dfrac{d\lambda}{d\tau}|_{u=0}\big)<0$ at $v_1$. The critical values of delay are given in Table \eqref{table4.8}. So, we have $[0,0.0526283) (U)$ (cf. Figure \eqref{figex4.13}(a) for $\tau=0.04$); $(0.0526283,4.14761) (S)$ (cf. Figure \eqref{figex4.13}(b) for $\tau=0.07$) and $\tau>4.14761$ we have unstable (cf. Figure \eqref{figex4.13}(c) for $\tau=5$). 
\item[$\bullet$] If we take $a=2$ then from figure \eqref{mainfig4.4} we are in the SSR region. We get two positive values of $v^\alpha$ as $0.581679$ and $0.981791$. The critical values of $\tau$ are given in Table \eqref{table4.10}. So, we get SSR from $\tau\in[0,4.10744)$. The stable solution for $\tau=3$ is shown in Figure \eqref{figex4.15}(a) and unstable solution for $\tau=5$ is shown in Figure \eqref{figex4.15}(b).
\item[$\bullet$] If we take $a=2.8$ then we are in SS region as shown in Figure \eqref{mainfig4.4}. We get two positive values of $v^\alpha$ as $0.820205$ and $1.05854$. The corresponding critical values of $\tau$ are given in Table \eqref{table4.11}. The stable solution for $\tau=3$, unstable solution for $\tau=5$ and stable solution for $\tau=9$ are shown in Figures \eqref{figex4.13}(d), (e) and (f) respectively.
\end{itemize}
\end{itemize}
\end{Ex}
\begin{figure}
	\subfloat[Unstable solution for $\tau=1.5$]{%
		\includegraphics[scale=0.46]{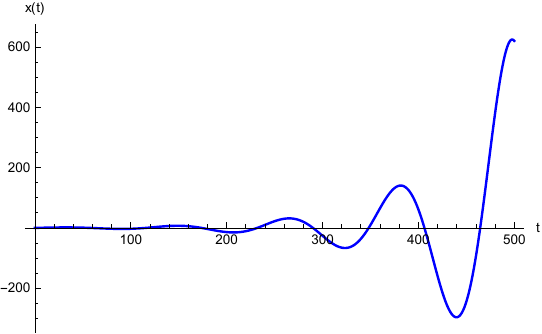}
	}\hspace{0.1cm}
	\subfloat[Stable solution for $\tau=1.7$]{%
		\includegraphics[scale=0.46]{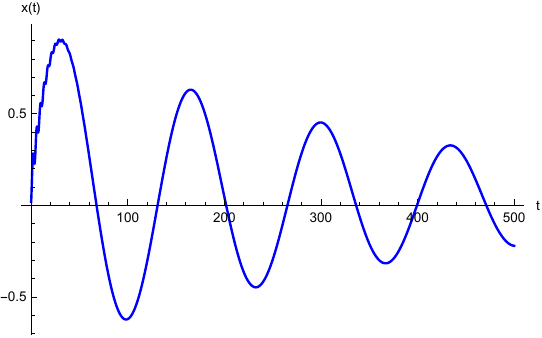}
	}\hspace{0.1cm}
	\subfloat[Unstable solution for $\tau=1.9$]{%
		\includegraphics[scale=0.46]{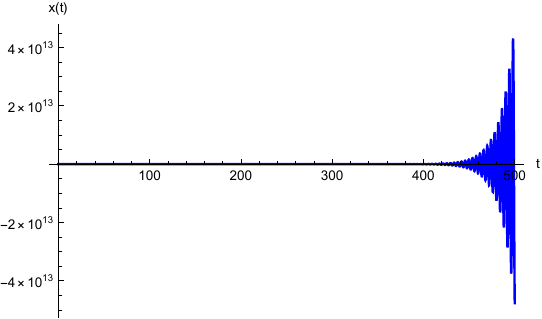}
	}\hspace{0.1cm}
	\subfloat[Stable solution for $\tau=1.2$]{%
		\includegraphics[scale=0.46]{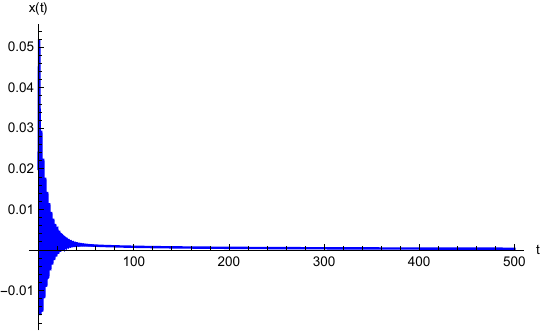}
	 }\hspace{0.1cm}
	\subfloat[Unstable solution for $\tau=2.4$]{%
	\includegraphics[scale=0.46]{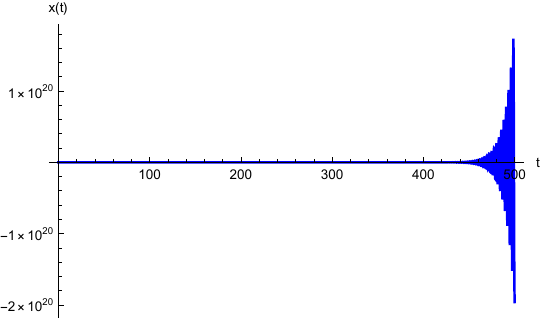}
	}\hspace{0.1cm}
	\subfloat[Stable solution for $\tau=3.3$]{%
	\includegraphics[scale=0.46]{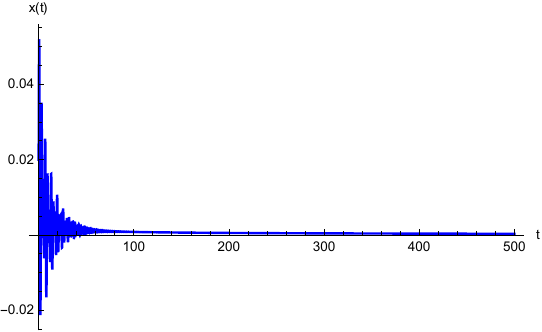}
	}	
\caption{}
	\label{figex4.12}
\end{figure}
\begin{table}
\begin{center}
\begin{tabular}{ |p{3cm}||p{1.5cm}| p{1.5cm}|p{1.5cm}|p{1.5cm}| }
 \hline
 $v_1=0.514075$& $0.0526283$& $14.487$& $28.9213$ & $43.3556$  \\  
 \hline
$v_2=0.955756$ & $4.14761$&$ 10.7965$& $17.4453$& $24.0941$\\
\hline
\end{tabular}
 \caption{Value of $v^\alpha$ and critical values of delay $\tau$ }\label{table4.8}
\end{center}
\end{table}
\begin{table}
\begin{center}
\begin{tabular}{ |p{3cm}||p{1.5cm}| p{1.5cm}|p{1.5cm}|p{1.5cm}| }
 \hline
 $v_1=0.581679$& $12.2832$& $24.652$&$ 37.0207$&$ 49.3894$  \\  
 \hline
$v_2=0.981791$ & $4.10744$&$ 10.5366$& $16.9658$&$ 23.395$\\
\hline
\end{tabular}
 \caption{Value of $v^\alpha$ and critical values of delay $\tau$ }\label{table4.10}
\end{center}
\end{table}
\begin{table}
\begin{center}
\begin{tabular}{ |p{3cm}||p{1.5cm}| p{1.5cm}|p{1.5cm}|p{1.5cm}| }
 \hline
 $v_1=0.820204$& $7.51375$&$ 15.5634$&$ 23.613$&$ 31.6627$  \\  
 \hline
$v_2=1.058537$ & $4.15684$&$ 10.0087$&$ 15.8607$&$ 21.7126$\\
\hline
\end{tabular}
 \caption{Value of $v^\alpha$ and critical values of delay $\tau$ }\label{table4.11}
\end{center}
\end{table}
\begin{figure}
	\subfloat[Unstable solution for $\tau=0.04$]{%
		\includegraphics[scale=0.46]{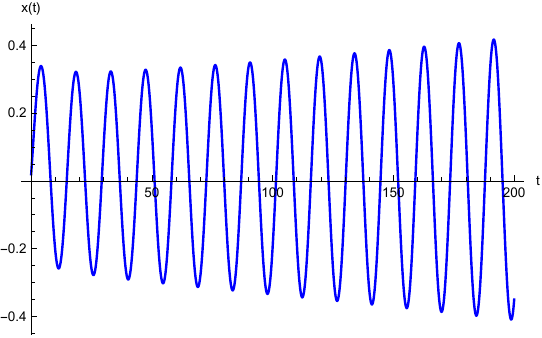}
	}\hspace{0.1cm}
	\subfloat[Stable solution for $\tau=0.07$]{%
		\includegraphics[scale=0.46]{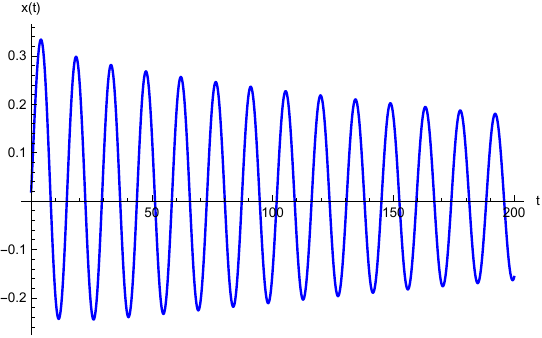}
	}\hspace{0.1cm}
	\subfloat[Unstable solution for $\tau=5$]{%
		\includegraphics[scale=0.46]{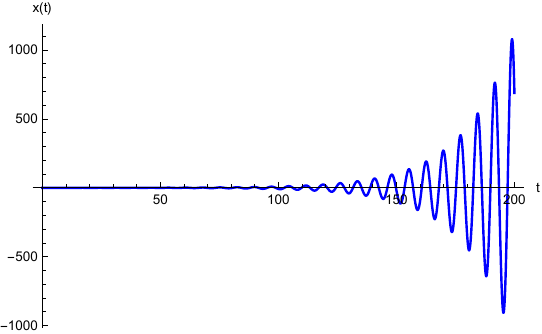}
	}\hspace{0.1cm}
	\subfloat[Stable solution for $a=2.8$ and $\tau=3$]{%
		\includegraphics[scale=0.46]{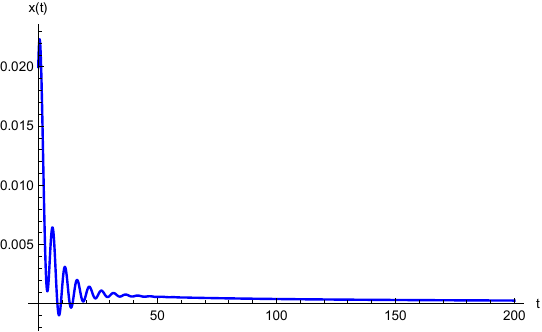}
	}\hspace{0.1cm}
	\subfloat[Stable solution for $a=2.8$ and $\tau=5$]{%
		\includegraphics[scale=0.46]{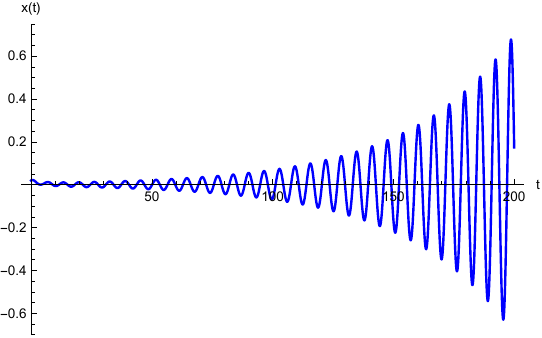}
	}\hspace{0.1cm}
	\subfloat[Stable solution for $a=2.8$ and $\tau=9$]{%
		\includegraphics[scale=0.46]{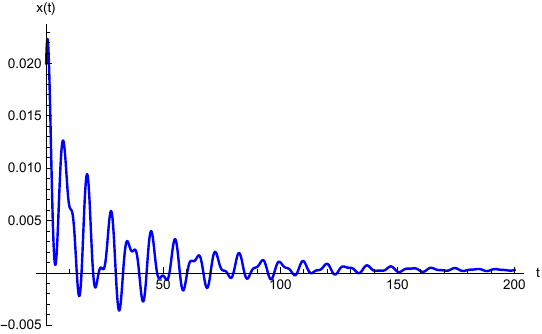}
	}		
\caption{}
	\label{figex4.13}
\end{figure}
\begin{figure}
	\subfloat[Stable solution for $a=2$ and $\tau=3$]{%
	\includegraphics[scale=0.66]{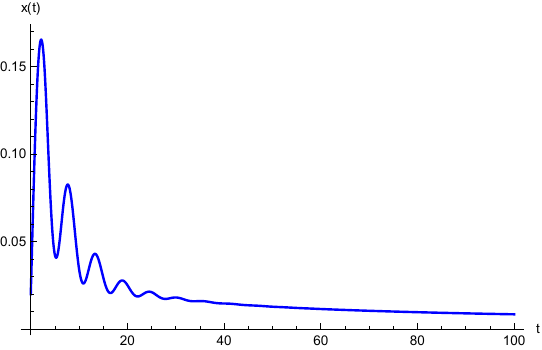}
	}\hspace{0.1cm}
	\subfloat[Stable solution for $a=2$ and $\tau=5$]{%
	\includegraphics[scale=0.66]{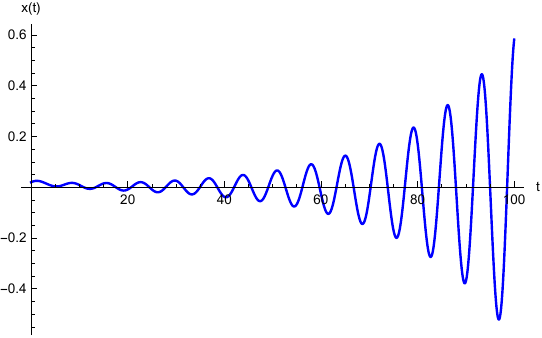}
	}\caption{}
	\label{figex4.15}
\end{figure}
	\section{Conclusions}\label{sec3.10}
The transcendental nature of the characteristic equation $\lambda^{2\alpha}x(t)+c \lambda^{\alpha}x(t)-a -b e^{-\lambda\tau}=0$ makes the two-term fractional-order delay differential equation a more complex than the usual ODE or a one-term FDDE. We provided various conditions under which the stability of the proposed equation does not depend on the delay. Further, we used the hypothesis that the ``change in stability can occur only when the characteristic root crosses the imaginary axis in the complex plane". This leads us to provide the boundary of the stable region in the parameter plane. Furthermore, we worked on the stability switch and an instability switch too. This made the work complete in all aspects. We hope this work will be very useful to the scientists dealing with the systems involving memory and hereditary properties. The conditions provided by us are very simple and depend only on the parameter values.
\section*{Acknowledgments}
S. Bhalekar acknowledges the University of Hyderabad for Institute of Eminence-Professional Development Fund (IoE-PDF) by MHRD (F11/9/2019-U3(A)).
D. Gupta thanks University Grants Commission for financial support (No.F.82-44/2020(SA-III)).
	
	\bibliography{newref}
\bibliographystyle{ieeetr}	

\end{document}